\newtheorem{thm}{Theorem}[section]
\newtheorem{prop}[thm]{Proposition}
\newtheorem{lem}[thm]{Lemma}
\newtheorem{cor}[thm]{Corollary}
\theoremstyle{definition}
\newtheorem{fact}[thm]{Fact}
\newtheorem{definition}[thm]{Definition}
\newtheorem{construction}[thm]{Construction}
\newtheorem{notation}[thm]{Notation}
\newtheorem{ex}[thm]{Example}
\newtheorem{rem}[thm]{Remark}
\theoremstyle{remark}
\newtheorem{nota}[thm]{Notation}
\DeclareMathOperator*{\ModR}{Mod-R}
\DeclareMathOperator*{\RMod}{R-Mod}
\DeclareMathOperator*{\modR}{mod-R}
\DeclareMathOperator{\ext}{Ext}
\DeclareMathOperator{\Tel}{Tel}
\DeclareMathOperator*{\im}{Im}
\DeclareMathOperator{\tor}{Tor}
\DeclareMathOperator{\Hom}{Hom}
\DeclareMathOperator{\End}{End}
\DeclareMathOperator*{\Ann}{Ann}
\DeclareMathOperator*{\Ker}{Ker}
\DeclareMathOperator*{\Coker}{Coker}
\DeclareMathOperator*{\spec}{Spec}
\DeclareMathOperator{\cogen}{Cogen}
\DeclareMathOperator*{\ass}{Ass}
\DeclareMathOperator*{\Span}{Span}
\DeclareMathOperator*{\Soc}{Soc}
\DeclareMathOperator*{\vass}{VAss}
\DeclareMathOperator*{\pd}{pd}
\DeclareMathOperator*{\id}{id}
\DeclareMathOperator*{\tr}{Tr}
\DeclareMathOperator*{\Supp}{Supp}
\DeclareMathOperator*{\Ch}{\mathbf{C}}
\DeclareMathOperator*{\Der}{\mathbf{D}}
\DeclareMathOperator*{\Loc}{Loc}
\DeclareMathOperator*{\Cone}{Cone}
\begin{document}
\title{Tilting classes over commutative rings}

\author{Michal Hrbek}
\email{hrbmich@gmail.com}
\address{Charles University \\
   Faculty of Mathematics and Physics\\
   Department of Algebra \\
   Sokolovsk\'a 83\\
   186 75 Praha 8\\
   Czech Republic}

\author{Jan \v{S}\v{t}ov\'{\i}\v{c}ek}
\email{stovicek@karlin.mff.cuni.cz}
\address{Charles University \\
   Faculty of Mathematics and Physics\\
   Department of Algebra \\
   Sokolovsk\'a 83\\
   186 75 Praha 8\\
   Czech Republic}

\thanks{Both authors were partially supported by the Grant Agency of the Czech Republic under the grant no.~14-15479S. The first named author was also partially supported by the project SVV-2016-260336 of the Charles University in Prague.} 

\keywords{Commutative ring, tilting module, cotilting module, Thomason set, local cohomology}
\subjclass[2010]{Primary 13C05, 16D90; Secondary 06D22, 13D07, 13D09, 13D30, 13D45}

\begin{abstract}
		We classify all tilting classes over an arbitrary commutative ring via certain sequences of Thomason subsets of the spectrum, generalizing the classification for noetherian commutative rings by Angeleri-Pospíšil-Šťovíček-Trlifaj (\cite{APST}). We show that the $n$-tilting classes can be equivalently expressed as classes of all modules vanishing in the first $n$ degrees of one of the following homology theories arising from a finitely generated ideal: $\tor_*(R/I,-)$, Koszul homology, \v{C}ech homology, or local homology (even though in general none of those theories coincide). Cofinite-type $n$-cotilting classes are described by vanishing of the corresponding cohomology theories. For any cotilting class of cofinite type, we also construct a corresponding cotilting module, generalizing the construction of Šťovíček-Trlifaj-Herbera (\cite{HST}). Finally, we characterize cotilting classes of cofinite type amongst the general ones, and construct new examples of $n$-cotilting classes \emph{not} of cofinite type, which are in a sense hard to tell apart from those of cofinite type.
\end{abstract}

\maketitle

\setcounter{tocdepth}{1} \tableofcontents

\section{Introduction}

Infinitely generated tilting and cotilting modules were introduced in \cite{CT,AC} about two decades ago as a formal generalization of Miyashita tilting modules~\cite{Miy}. The main motivation for studying Miyashita tilting modules is that they represent equivalences of derived categories. In the last few years, it became clear that infinitely generated modules represent derived equivalences as well (see~\cite{Baz,BMT,Sto,FMS,PS}), but also that there is more than that.

In the realm of commutative noetherian rings, it was shown~\cite{APST} that tilting modules have a very close relation to the underlying geometry of the corresponding affine schemes. In fact, a full classification of tilting modules up to additive equivalence in terms of geometric data was obtained there. From a wider perspective, this was explained by Angeleri and Saor\'{\i}n~\cite{AHS} who exhibited a direct relation of the dual cotilting modules to $t$-structures of the derived category, and the resulting $t$-structures were known to have a similar classification from~\cite{AJS}. The outcome was that in the commutative noetherian setting, cotilting modules represent a class of $t$-structures (namely those, which are compactly generated and induce a derived equivalence to the heart).

The motivating question of the present work is, what remains true \emph{for commutative rings which are not necessarily noetherian}. A posteriori, the general setting forced us to look for more conceptual proofs and gave more insight in the problem even for noetherian rings. Here is a short overview of highlights of the paper:

\begin{enumerate}
 \item For a commutative ring $R$, we give a full classification of tilting $R$-modules up to additive equivalence in terms of filtrations of Thomason subsets of $\spec R$ (Theorem~\ref{T:mainthm2}).
 \item We obtain more insight in $R$-modules which are perfect in the derived category as well as in resolving subcategories formed by such modules, in that every such resolving subcategory is generated by syzygies of Koszul complexes (this follows from Theorem~\ref{T:mainthm2}(iv) combined with Proposition~\ref{P14}).
\item We observe that tilting modules up to additive equivalence are also classified by vanishing of the derived completion functors as well as by vanishing of the \v{C}ech homology (Theorem~\ref{T:mainthm3}).
\item We give a construction of dual cotilting modules to tilting modules (Theorem~\ref{thmconstruction}) and present some intriguing examples in the last section.
\end{enumerate}

A starting point for the classification in this paper (as well as for its predecessor~\cite{H} which focused on (co)tilting modules of homological dimensions at most one) is the 1997 paper of Thomason~\cite{Tho}. There he  generalized Neeman-Hopkins's classification of thick subcategories of the derived category of perfect complexes over a commutative noetherian ring to a general commutative ring.
The parametrizing family for this classification is the set of all sets of the spectrum of the ring, which are open with respect to the Hochster dual of the usual Zariski topology. If the ring is noetherian, these \emph{Thomason} sets coincide precisely with the specialization closed subsets (that is, upper subsets of $\spec R$ with respect to set-theoretic inclusion). It turns out that the main classification theorem for tilting modules and classes induced by them in~\cite{APST} (but not its proof) remains valid in the non-noetherian setting provided that we simply change `specialization closed set' to 'Thomason set' in the statement.

As was the case in the noetherian case, it is convenient to carry out the main steps of the proof in the dual setting of cotilting modules and cotilting classes induced by them first, and then transfer the results via elementary duality. The reason why the dual setting is more graspable seems to lie in the fact that, over a commutative ring, the cotilting classes dual to a tilting class are closed under injective envelopes (Proposition~\ref{P01}). In homological dimension one, this already leads to the well-understood theory of hereditary torsion pairs, as explained in \cite{H}. We do not know of any analogous closure property for tilting classes. 

In other respects, our approach differs considerably from \cite{APST}. For example, we cannot use the Matlis theory of injectives, the classical theory of associated primes or the Auslander-Bridger transpose anymore. Instead, we use the classification of hereditary torsion pairs of finite type from~\cite{H} and prove directly that a cotilting class is described cosyzygy-wise by a sequence of such torsion pairs (Lemma~\ref{L02}).

Another problem we have to tackle is that tilting classes bijectively correspond to resolving subcategories of modules having finite projective resolutions by finitely generated projectives (in the sense of Theorem~\ref{T:finitetype}). The description of such resolving subcategories in the noetherian case was obtained, independently of~\cite{APST}, by Dao and Takahashi \cite{DT}. It turns out that a sufficiently rich supply of $R$-modules with such a finite resolution, for any commutative ring $R$, comes from tails of classical Koszul complexes associated with finite sets of elements of $R$. The key point here is to understand when exactly such tails are exact, i.e.\ when high enough Koszul homologies of $R$ vanish. 

Apart from the homological description of tilting classes which was obtained for noetherian rings in \cite{APST}, i.e.\ in terms of vanishing of certain degrees of $\tor_*(R/I,-)$, we also obtain another one in terms of vanishing of the Koszul homology. We proceed further here to show that two other homology theories also fit in this place---the local and the \v{C}ech homology associated to a finitely generated ideal $I$. The interesting point is that, unless the ring is noetherian, these two homologies need not be isomorphic. However, the vanishing of the first $n$ homologies of a module is always equivalent for all of the four homology theories in play. The analogous result holds for the dual setting too for more classical local and \v{C}ech cohomologies.

\smallskip

The paper is organized as follows. Section~\ref{sec:torsion-comm} gathers the required results about hereditary torsion pairs of commutative rings, focusing on those of finite type. In Section~\ref{sec:grade} we prove the key Proposition~\ref{P:Koszul-ext} which shows that vanishing of $\ext_R^i(R/I,-)$ for $i=0,1,\ldots,n-1$ is equivalent to vanishing of the corresponding Koszul cohomology for any commutative ring. We link this vanishing property with the notion of vaguely associated prime ideal, yielding results which are analogous to characterizations of the grade of a finitely generated module over a noetherian ring. The fourth section recalls the fundamentals of the theory of large tilting modules over an arbitrary ring. These three sections prepare the ground for the core Section~\ref{sec:cotilt}, in which we classify the $n$-cotilting classes of cofinite type over a commutative ring. These results are then reformulated and translated for the tilting side of the story in Section~\ref{sec:main-results}. The aforementioned connection with the derived functors of torsion and completion, as well as the \v{C}ech (co)homology, is explained in Section~\ref{sec:derived}. In the following Section~\ref{sec:construct}, we show how a cotilting module associated to any cotilting class of cofinite type over a commutative ring can be constructed, building on the idea from \cite{HST}. In the final Section~\ref{sec:notcofinite}, we characterize the cofinite-type cotilting classes amongst the general ones, and we construct new examples of $n$-cotilting classes which are not of cofinite type, but which are in a sense difficult to tell apart from cofinite type cotilting classes.
	
\section{Torsion pairs over commutative rings}
\label{sec:torsion-comm}

In this section we give a classification of hereditary torsion pairs of finite type over commutative rings by Thomason subsets of the Zariski spectrum. This will be a key tool further in the paper. The material is probably known or not difficult to see for experts and almost all the fragments of the classification are present in the literature, but we have not been able to find a convenient reference.

Regarding our notation, $R$ will always stand for an associative and unital ring and $\ModR$ for the category of right $R$-modules. We will always assume that $R$ is also commutative unless stated otherwise. If $I \subseteq R$ is an ideal, we denote by
\[ V(I) = \{ \mathfrak{p} \in \spec R \mid I \subseteq \mathfrak{p} \} \]
the corresponding Zariski closed set. We begin our discussion with so-called Hochster duality of spectral topological spaces and Thomason sets.

\subsection{Spectral spaces, Hochster duality, and Thomason sets}

The set of Thomason subsets of $\spec R$ was used as an index set in Thomason's classification~\cite{Tho} of thick subcategories of the category of perfect complexes. Although Thomason sets already appear in the work of Hochster~\cite{Hoch}, their name is customary nowadays since they were used in connection with other classification problems (see e.g.~\cite{KP} and the references therein) and, in particular, with tilting theory in~\cite{H}. Let us start the discussion with a definition, which is relatively simple:

\begin{definition} \label{D:Thomason}
Given a commutative ring $R$, a subset $X \subseteq \spec R$ is a \emph{Thomason set} if it can be expressed as a union of complements of quasi-compact Zariski open sets. That is, there exists a collection $\mathcal{G}$ of finitely generated ideals of $R$ such that
		\[ X = \bigcup_{I \in \mathcal{G}} V(I), \]
where $U_I = \spec R \setminus V(I)$ are the quasi-compact Zariski open sets.
\end{definition}

To put this into the right context, one should note that Thomason sets define a topology on $\spec R$ for which they are the open sets. This observation comes from Hochster's~\cite[Proposition 8]{Hoch} and, following \cite[\S1]{KP}, it can be also explained as follows. First, $\spec R$ with the Zariski topology is a so-called spectral space.

\begin{definition}
A topological space $X$ is \emph{spectral} (or \emph{coherent} in the terminology of~\cite[\S II.3.4, p. 65]{Jo}) if
\begin{enumerate}
 \item the class of quasi-compact open sets is closed under intersections and forms an open basis for the topology, and
 \item every irreducible closed set is the closure of a unique point.
\end{enumerate}
\end{definition}

In fact, Hochster~\cite{Hoch} proved that spectral spaces are up to homeomorphism precisely the ones of the form $\spec R$ for a commutative ring $R$. If $X$ is a spectral topological space, the collection of quasi-compact open sets with the operations of the set-theoretic union and intersection is a distributive lattice. In fact, $X$ is fully determined by this distributive lattice and every distributive lattice arises like that. Recall that an ideal $I$ in a lattice $L$ is a prime ideal if $x \wedge y \in I$ implies $x \in I$ or $y \in I$.

\begin{prop}[{\cite[Corollaries II.1.7(i) and II.3.4]{Jo}}] \label{P:distrib}
There is a bijective correspondence between
\begin{enumerate}
 \item homeomorphism classes of spectral spaces $X$, and
 \item isomorphism classes of distributive lattices $L$,
\end{enumerate}
given as follows. To a space $X$ we assign the lattice of quasi-compact open sets. On the other hand, given $L$, the underlying set of $X$ is the set of all lattice prime ideals $\mathfrak{p} \subseteq L$, and the open basis of the topology is formed by the quasi-compact sets $U_x = \{ \mathfrak{p} \in X \mid x \not\in \mathfrak{p} \}$, $x \in L$ (this is a lattice theoretic version of the Zariski topology).
\end{prop}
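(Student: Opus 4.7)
The plan is to establish this Stone-type duality by explicitly constructing maps in both directions and checking they are mutually inverse. First, I would verify that both constructions are well-defined. Given a spectral space $X$, the family of quasi-compact open subsets is closed under finite unions (immediate) and, by axiom~(1) of spectrality, also under finite intersections; hence it forms a distributive sublattice of the lattice of all open sets. Conversely, given a distributive lattice $L$, I would define $X_L$ to be the set of lattice prime ideals $\mathfrak{p} \subseteq L$, topologized by declaring the sets $U_x = \{\mathfrak{p} : x \notin \mathfrak{p}\}$ to be a basis. The identities $U_x \cap U_y = U_{x \wedge y}$ and $U_x \cup U_y = U_{x \vee y}$ (the latter uses primality of $\mathfrak{p}$) show that the $U_x$'s are closed under finite intersections and that their union recovers every open set as a union of basic ones.

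The main technical step is to verify that $X_L$ is spectral. Each $U_x$ must be quasi-compact: by the basis reduction, any open cover can be refined to a cover by basic sets $U_{y_i}$, and the failure of a finite subcover would produce a lattice ideal containing all $y_i$ but not $x$; this ideal could then be extended, by Zorn's lemma, to a prime ideal, contradicting $\mathfrak{p} \in U_x$. This is the classical \emph{distributive prime ideal theorem}, and it is the one nontrivial input the argument depends on. Sobriety (axiom~(2)) is then easy: the closure of $\mathfrak{p}$ is $\{\mathfrak{q} : \mathfrak{p} \subseteq \mathfrak{q}\}$, and an arbitrary irreducible closed set is checked to be the closure of the prime ideal obtained by intersecting the complements of the basic opens meeting it.

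I would then verify that the two constructions are mutually inverse. Starting from a spectral $X$, the map $\eta_X \colon X \to X_{L(X)}$, $x \mapsto \{U : x \notin U\}$, is continuous and open by construction; it is injective because distinct points can be separated by quasi-compact opens (a consequence of sobriety together with the qc-open basis), and surjective because every prime ideal in $L(X)$ is the set of qc-opens missing a generic point of a suitable irreducible closed set, which exists by axiom~(2). Starting from a lattice $L$, the map $\varepsilon_L \colon L \to L(X_L)$, $x \mapsto U_x$, is a lattice homomorphism; its injectivity again reduces to the prime ideal theorem (if $x \ne y$ then some prime ideal separates them), and its surjectivity follows because every quasi-compact open in $X_L$ is a finite union of basic $U_{x_i}$'s and hence equals $U_{\bigvee x_i}$. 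Finally, a homeomorphism of spectral spaces induces an isomorphism of their qc-open lattices and vice versa, so the correspondence descends to the stated bijection on isomorphism classes. The only genuine obstacle is the reliance on the distributive prime ideal theorem; everything else is routine bookkeeping.
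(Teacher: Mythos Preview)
The paper does not prove this proposition at all; it is stated with a citation to Johnstone's \emph{Stone Spaces} and used as a black box. Your outline is the standard proof of Stone duality for distributive lattices and is essentially correct, so in that sense you have supplied strictly more than the paper does.

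One small slip: you attribute primality to the identity $U_x \cup U_y = U_{x\vee y}$, but this follows from the ideal axioms alone (downward closure plus closure under finite joins). It is the identity $U_x \cap U_y = U_{x\wedge y}$ that needs $\mathfrak{p}$ to be prime, since one must know that $x,y \notin \mathfrak{p}$ implies $x \wedge y \notin \mathfrak{p}$. This does not affect the argument, as both identities hold and you use both. The rest of your sketch---quasi-compactness of $U_x$ via the prime ideal theorem, sobriety of $X_L$, and the verification that $\eta_X$ and $\varepsilon_L$ are mutual inverses---is accurate, with the surjectivity of $\eta_X$ requiring the observation that for a prime ideal $\mathfrak{p} \subseteq L(X)$ the set $X \setminus \bigcup_{U \in \mathfrak{p}} U$ is irreducible closed (primality of $\mathfrak{p}$ and quasi-compactness of the basic opens are both used here) and hence has a generic point mapping to $\mathfrak{p}$.
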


\begin{rem} \label{R:open-sets}
The lattice $\Omega(X)$ of all open sets in a topological space $X$ is always a distributive lattice. If $X$ is spectral and $L \subseteq \Omega(X)$ is the sublattice of all quasi-compact open sets, then $\Omega(X)$ is isomorphic to the lattice of all ideals $L$ (see the proof \cite[Proposition II.3.2]{Jo}).
\end{rem}

Now, the opposite lattice of $L^\mathrm{op}$ of a distributive lattice $L$ is again distributive. Since the complement of a prime ideal in $L$ is a prime filter in $L$ and hence a prime ideal in $L^\mathrm{op}$ (see~\cite[Prop. I.2.2]{Jo}), the space corresponding to $L^\mathrm{op}$ has (up to this canonical identification) the same underlying set as the one corresponding to $L$. The topology is different, however. Starting with a spectral space $X$, the dual topology will have as an open basis precisely the complements of the original quasi-compact open sets. The resulting space is called the \emph{Hochster dual} of $X$.

If $X = \spec R$ with the Zariski topology, the open sets in the dual topology are precisely the Thomason sets. The following immediate consequence of the present discussion will be useful to us.

\begin{cor} \label{C:dual}
Let $R$ be a commutative ring and consider $X = \spec R$ with the Zariski topology. Then there is a bijective correspondence between
\begin{enumerate}
\item Thomason subsets of $\spec R$, and
\item filters in the lattice of quasi-compact open sets of $X$.
\end{enumerate}
Given a Thomason set $X \subseteq \spec R$, we assign to it the filter of quasi-compact open sets $\{U \mid \spec R \setminus U \subseteq X \}$. Conversely, given a filter $\mathcal{F}$, we assign to it $X = \bigcup_{U \in \mathcal{F}} (\spec R \setminus U)$. 
\end{cor}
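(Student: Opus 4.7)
The plan is to apply Remark~\ref{R:open-sets} to the Hochster dual space $X^\ast$ of $X = \spec R$. By the discussion preceding the corollary, the open sets of $X^\ast$ are precisely the Thomason subsets of $\spec R$, and $X^\ast$ is itself spectral with associated distributive lattice $L^\mathrm{op}$, where $L$ denotes the lattice of quasi-compact open sets of $X$. Concretely, the quasi-compact opens of $X^\ast$ are the sets $\spec R \setminus U$ for $U \in L$: intersection in $X^\ast$ corresponds to union in $X$ and vice versa, which is exactly the order reversal encoded by passing from $L$ to $L^\mathrm{op}$.

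Now Remark~\ref{R:open-sets} applied to $X^\ast$ identifies the lattice $\Omega(X^\ast)$ of all open sets of $X^\ast$ with the lattice of ideals of its lattice of quasi-compact opens, that is, with the ideals of $L^\mathrm{op}$. Since an ideal of $L^\mathrm{op}$ is by definition a filter of $L$, one obtains the claimed bijection between Thomason subsets of $\spec R$ and filters in $L$.

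It remains to verify the two explicit formulas in the statement. Under the bijection in Remark~\ref{R:open-sets}, an open set of a spectral space is recovered from its associated ideal of quasi-compact opens as the union of the members of that ideal. Transported to $X^\ast$, a filter $\mathcal{F} \subseteq L$ corresponds to the Thomason set $\bigcup_{U \in \mathcal{F}}(\spec R \setminus U)$. Conversely, the ideal of $L^\mathrm{op}$ associated to a Thomason set $Y$ consists of those quasi-compact opens of $X^\ast$ that are contained in $Y$; writing these as $\spec R \setminus U$ for $U \in L$ and reinterpreting them as elements of $L$ gives precisely the filter $\{U \in L \mid \spec R \setminus U \subseteq Y\}$.

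The only genuine obstacle is bookkeeping: one must keep track of the order reversal when passing from $L$ to $L^\mathrm{op}$ and check that the identification of the quasi-compact opens of $X^\ast$ with the elements of $L^\mathrm{op}$ matches the set-theoretic complementation used in the explicit formulas. Everything else is a direct application of Proposition~\ref{P:distrib} and Remark~\ref{R:open-sets}.
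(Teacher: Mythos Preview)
Your argument is correct and follows precisely the route the paper intends: the corollary is stated as an immediate consequence of the preceding discussion, and you have simply spelled out that consequence by applying Remark~\ref{R:open-sets} to the Hochster dual $X^\ast$, identifying Thomason sets with opens of $X^\ast$ and ideals of $L^\mathrm{op}$ with filters of $L$. There is nothing to add.
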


\subsection{Hereditary torsion pairs of finite type}

A {torsion pair} in $\ModR$ is a pair of classes $(\mathcal{T},\mathcal{F})$ of modules such that $\Hom_R(T,F) = 0$ for each $T \in \mathcal{T}$ and $F \in \mathcal{F}$ and for each $X \in \ModR$ there exists a short exact sequence $0 \to T \to X \to F \to 0$. This short exact sequence is unique up to a unique isomorphism. The class $\mathcal{T}$ is called a \emph{torsion class} and torsion classes of a torsion pair $\mathcal{T}$ are characterized by the closure properties---they are closed under coproducts, extensions and factor modules. Dually, \emph{torsion-free classes} $\mathcal{F}$ are characterized as those being closed under products, extensions and submodules.

A torsion pair is \emph{hereditary} if $\mathcal{T}$ is closed under submodules. Equivalently, $\mathcal{F}$ is closed under injective envelopes, \cite[\S VI.3]{St}.

We will be mostly interested in the case when the torsion class is generated by a set of finitely presented modules. That is, $\mathcal{F} = \Ker\Hom_R(\mathcal{T}_0,-)$ for a set of finitely presented modules $\mathcal{T}_0$. Although all the concepts can be defined over any ring, commutative or not, the following is special in the commutative case.

\begin{prop} \label{P:fin-to-hered}
Let $R$ be a commutative ring and $(\mathcal{T},\mathcal{F})$ be a torsion pair such that $\mathcal{F} = \Ker\Hom_R(\mathcal{T}_0,-)$ for a set of finitely presented modules $\mathcal{T}_0$. Then $(\mathcal{T},\mathcal{F})$ is hereditary.
\end{prop}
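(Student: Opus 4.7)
The plan is to verify the equivalent form of heredity recalled just before the proposition, namely that $\mathcal{F}$ is closed under injective envelopes. So let $F \in \mathcal{F}$, set $E = E(F)$, and fix an arbitrary $M \in \mathcal{T}_0$; it suffices to show $\Hom_R(M, E) = 0$.

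Pick finitely many generators $m_1, \ldots, m_k$ of $M$ (finite presentation is stronger than needed here). The evaluation map $\phi_N \colon \Hom_R(M, N) \to N^k$, $f \mapsto (f(m_1), \ldots, f(m_k))$, is an embedding, natural in $N$. Under $\phi_E$, I claim that $\Hom_R(M, E) \cap F^k$ coincides with the image of $\Hom_R(M, F)$: if each $f(m_i)$ lies in $F$, then for any $r_1, \ldots, r_k \in R$ one has $f(\sum r_i m_i) = \sum r_i f(m_i) \in F$, since $F$ is an $R$-submodule of $E$; so $f$ factors through $F$. The reverse inclusion is obvious, and this intersection is therefore zero because $F \in \mathcal{F}$.

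The conclusion is now formal. Since $F$ is essential in $E$, the finite product $F^k$ is essential in $E^k$, and intersection with any submodule of $E^k$ preserves essentiality; applied to $\Hom_R(M, E) \hookrightarrow E^k$, this yields that $\Hom_R(M, F) = \Hom_R(M, E) \cap F^k$ is essential in $\Hom_R(M, E)$. Being simultaneously essential and zero, it forces $\Hom_R(M, E) = 0$, as required. I do not foresee any serious obstacle beyond the identification $\Hom_R(M, E) \cap F^k = \Hom_R(M, F)$, which uses precisely that $F$ is stable under the $R$-action so that a tuple with entries in $F$ extends to an $R$-homomorphism into $F$.
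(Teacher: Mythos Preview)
Your argument is correct. The paper itself gives no proof here and simply refers to \cite[Lemma~4.2]{AH}, so you have supplied a self-contained argument where the paper defers to the literature; as you note, only finite generation of $M$ is needed.

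One small remark on your closing commentary: the identification $\phi_E(\Hom_R(M,E)) \cap F^k = \phi_E(\Hom_R(M,F))$ is valid over any ring, since it only uses that the $m_i$ generate $M$ and that $F$ is a submodule of $E$. The place where commutativity of $R$ is genuinely used is the essential-extension step: you need $\phi_E(\Hom_R(M,E))$ to be an $R$-submodule of $E^k$ (not just an abelian subgroup) to apply the fact that an essential submodule meets every nonzero submodule. This holds because commutativity gives $\Hom_R(M,E)$ an $R$-module structure and makes $\phi_E$ an $R$-linear map. So the mathematics is fine, but your diagnosis of where the hypothesis enters is slightly off.
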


\begin{proof}
We refer to (the proof of) \cite[Lemma 4.2]{AH}.
\end{proof}

A hereditary torsion pair is determined by the class of cyclic torsion modules, or equivalently by the set of ideals $I$ for which $R/I$ is torsion, \cite[Proposition VI.3.6]{St}. Such a set of ideals corresponding to a hereditary torsion pair is called a \emph{Gabriel topology}. We refer to~\cite[\S VI.5]{St} for the general definition of a Gabriel topology, while a much simpler special case will be discussed below in Lemma~\ref{L:Gabriel-finite}.

\begin{prop} \label{P:torsion}
Let $R$ be a commutative ring. There is a bijective correspondence between hereditary torsion pairs $(\mathcal{T},\mathcal{F})$ in $\ModR$ and Gabriel topologies $\mathcal{G}$ on $R$.
Moreover, the following are equivalent for a hereditary torsion pair $(\mathcal{T},\mathcal{F})$:
\begin{enumerate}
\item There is a set $\mathcal{T}_0$ of finitely presented modules with $\mathcal{F} = \Ker\Hom_R(\mathcal{T}_0,-)$.
\item $\mathcal{F}$ is closed under direct limits.
\item The corresponding Gabriel topology $\mathcal{G}$ has a basis of finitely generated ideals (i.e.\ each ideal in $\mathcal{G}$ contains a finitely generated ideal in $\mathcal{G}$).
\end{enumerate}
\end{prop}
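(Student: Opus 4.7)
The first assertion, the bijection between hereditary torsion pairs in $\ModR$ and Gabriel topologies on $R$, is classical Gabriel-theoretic content and I would simply cite \cite[\S VI]{St}, noting that the correspondence sends $(\mathcal{T},\mathcal{F})$ to $\mathcal{G}=\{I\subseteq R\mid R/I\in\mathcal{T}\}$. The bulk of the work lies in the equivalence of (1), (2), (3), which I would prove as (1)$\Rightarrow$(2)$\Rightarrow$(3)$\Rightarrow$(1).

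The implication (1)$\Rightarrow$(2) is immediate from the fact that $\Hom_R(C,-)$ commutes with direct limits whenever $C$ is finitely presented: then $\mathcal{F}=\bigcap_{C\in\mathcal{T}_0}\Ker\Hom_R(C,-)$ is closed under direct limits. For (3)$\Rightarrow$(1) I would take $\mathcal{T}_0=\{R/I\mid I\in\mathcal{G}\text{ finitely generated}\}$; each such $R/I$ is finitely presented. Given an arbitrary $J\in\mathcal{G}$, by (3) there is a finitely generated $I\in\mathcal{G}$ with $I\subseteq J$, and the surjection $R/I\twoheadrightarrow R/J$ yields a monomorphism $\Hom_R(R/J,M)\hookrightarrow\Hom_R(R/I,M)$. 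Hence $M\in\mathcal{F}$ as soon as $\Hom_R(R/I,M)=0$ for every $I\in\mathcal{T}_0$, showing $\mathcal{F}=\Ker\Hom_R(\mathcal{T}_0,-)$.

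The implication (2)$\Rightarrow$(3) is the heart of the statement and the step I expect to require the most care. Given $I\in\mathcal{G}$, I would write $I$ as the directed union of its finitely generated subideals $(I_\alpha)_\alpha$ and, for each $\alpha$, let $J_\alpha\subseteq R$ be the preimage of the torsion submodule $t(R/I_\alpha)\subseteq R/I_\alpha$ under the canonical projection. Since torsion classes are closed under quotients, $I_\alpha\subseteq I_\beta$ implies $J_\alpha\subseteq J_\beta$, so $(J_\alpha)_\alpha$ is itself a directed system, and by construction each $R/J_\alpha$ is torsion-free. Using (2), the direct limit $\varinjlim_\alpha R/J_\alpha\cong R/\bigcup_\alpha J_\alpha$ is torsion-free as well. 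On the other hand, $\bigcup_\alpha J_\alpha\supseteq\bigcup_\alpha I_\alpha=I$, so this same module is a quotient of the torsion module $R/I$, hence torsion. Being simultaneously torsion and torsion-free, it must vanish, forcing $1\in J_\alpha$ for some $\alpha$, which translates into $R/I_\alpha\in\mathcal{T}$ and hence $I_\alpha\in\mathcal{G}$---the finitely generated ideal in $\mathcal{G}$ contained in the original $I$.
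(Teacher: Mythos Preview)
Your proof is correct. The paper itself does not give an argument here: it simply cites \cite[\S VI.6]{St} for the bijection and \cite[Lemma 2.4]{H} for the equivalence of (1)--(3), so your self-contained treatment is more informative than what appears in the paper. Your argument for (2)$\Rightarrow$(3), pushing the torsion radical through the directed system $R/I_\alpha$ and using that $1$ must lie in some $J_\alpha$, is exactly the kind of direct-limit trick one would expect and is essentially the standard proof underlying the cited reference.
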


\begin{proof}
We refer to \cite[\S VI.6]{St} for the bijective correspondence. The second part has been proved in \cite[Lemma 2.4]{H}. 
\end{proof}

\begin{definition} \label{D:finite-type}
The torsion pairs satisfying the conditions (1)--(3) above will be called hereditary torsion pairs \emph{of finite type}.
\end{definition}

If $\mathcal{G}$ is a Gabriel topology on $R$, we denote by $\mathcal{G}^f$ the collection of all finitely generated ideals in $\mathcal{G}$. It is always closed under products of ideals (i.e.\ if $I_1,I_2 \in \mathcal{G}$, then $I_1\!\cdot\!I_2 \in \mathcal{G}$) and finitely generated overideals. If Propositions~\ref{P:torsion}(3) holds, $\mathcal{G}^f$ completely determines $\mathcal{G}$ and the two latter closure properties in fact completely characterize such Gabriel topologies.

\begin{lem}[{\cite[Lemma 2.3]{H}}] \label{L:Gabriel-finite}
Let $R$ be a commutative ring. Then a filter $\mathcal{G}$ of ideals of $R$ with a basis of finitely generated ideals is a Gabriel topology if and only if it is closed under products of ideals.
\end{lem}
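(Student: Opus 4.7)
The plan is to check the standard Gabriel-topology axioms (T1)--(T4) from \cite[\S VI.5]{St} against the hypotheses. By the filter assumption, (T1) (upward closure) and (T2) (closure under finite intersections) come for free, so the real content concerns (T3), which demands $(I:r) = \{s\in R\mid sr\in I\}\in\mathcal{G}$ whenever $I\in\mathcal{G}$ and $r\in R$, and (T4), the transitivity axiom. A pleasant feature of the commutative setting is that (T3) is essentially automatic: commutativity forces $Ir\subseteq I$, hence $I\subseteq(I:r)$, and so (T1) alone gives $(I:r)\in\mathcal{G}$. This holds for \emph{any} filter of ideals over a commutative ring and uses neither the finitely generated basis nor closure under products.

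For the ``if'' direction I would deduce (T4) from closure under products, using the finitely generated basis decisively. Suppose $J$ is an ideal and $I\in\mathcal{G}$ satisfies $(J:r)\in\mathcal{G}$ for every $r\in I$. Using the basis, I replace $I$ by a finitely generated subideal $I'=(x_1,\dots,x_n)$ with $I'\in\mathcal{G}$. Then $K:=\bigcap_{i=1}^n(J:x_i)\in\mathcal{G}$ by (T2), and by construction $Kx_i\subseteq J$ for every $i$, so $KI'\subseteq J$. Closure under products gives $KI'\in\mathcal{G}$, and upward closure then forces $J\in\mathcal{G}$.

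Conversely, a Gabriel topology is always closed under products, and this direction does not even need the finitely generated basis. Given $I,J\in\mathcal{G}$ and any $r\in J$, we have $Ir\subseteq IJ$, so $I\subseteq(IJ:r)$ and (T1) gives $(IJ:r)\in\mathcal{G}$; applying (T4) with witness $J$ delivers $IJ\in\mathcal{G}$. The substantive obstacle is thus confined to the ``if'' direction: without the finitely generated basis, the intersection $\bigcap_{r\in I}(J:r)$ would range over a possibly infinite index set and need not remain in the filter, so the finite-product argument would collapse and (T4) becomes strictly stronger than product closure.
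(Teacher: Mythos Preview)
Your proof is correct. The paper does not actually supply a proof of this lemma; it merely quotes the result from \cite[Lemma 2.3]{H}, so there is no argument in the paper to compare against. Your verification of Stenstr\"om's axioms (T1)--(T4) is the standard one: (T3) is automatic over a commutative ring since $I\subseteq(I:r)$, and your reduction of (T4) to closure under products via a finite generating set $I'=(x_1,\dots,x_n)$ and the finite intersection $K=\bigcap_i(J:x_i)$ is exactly the intended use of the finitely-generated-basis hypothesis. The converse direction is likewise correct and, as you note, needs neither commutativity nor the basis assumption.
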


The following correspondence is a consequence of standard commutative algebra.

\begin{lem} \label{L:to-lattice}
For a commutative ring $R$, there is a bijective correspondence between
\begin{enumerate}
\item Gabriel topologies with a basis of finitely generated ideals, and
\item filters of the lattice of quasi-compact Zariski open subsets of $\spec R$.
\end{enumerate}
\end{lem}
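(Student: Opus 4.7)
The plan is to construct the bijection explicitly and verify that each assignment produces an object of the correct kind. Given a Gabriel topology $\mathcal{G}$ with a basis of finitely generated ideals, I assign to it the collection
\[ \mathcal{F}_{\mathcal{G}} = \{ U_I \mid I \in \mathcal{G}^f \} \]
of quasi-compact Zariski opens. Conversely, given a filter $\mathcal{F}$ in the lattice of quasi-compact opens, I assign
\[ \mathcal{G}_{\mathcal{F}} = \{ J \subseteq R \mid U_I \subseteq U_J \text{ for some finitely generated } I \text{ with } U_I \in \mathcal{F}\}, \]
equivalently the filter of ideals whose $V(-)$ is contained in the complement of some member of $\mathcal{F}$.

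The key technical ingredient is the standard commutative algebra fact that for a finitely generated ideal $I$ and an arbitrary ideal $J$, the inclusion $V(J) \subseteq V(I)$ is equivalent to $I \subseteq \sqrt{J}$, which by finite generation of $I$ is equivalent to $I^n \subseteq J$ for some $n \geq 1$. Together with the identities $U_I \cap U_J = U_{IJ}$ and the order-reversing property of $I \mapsto U_I$ on finitely generated ideals, this is all that is needed.

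Next I would verify well-definedness. For $\mathcal{F}_{\mathcal{G}}$: closure under finite meets reduces to showing $IJ \in \mathcal{G}^f$ whenever $I,J \in \mathcal{G}^f$, which is exactly closure of $\mathcal{G}$ under products of ideals (Lemma~\ref{L:Gabriel-finite}). Upward closure is the nontrivial part: if $U_I \in \mathcal{F}_\mathcal{G}$ and $U_I \subseteq U_J$ with $J$ finitely generated, the fact above gives $I^n \subseteq J$ for some $n$, then closure under products yields $I^n \in \mathcal{G}$, and closure under superideals gives $J \in \mathcal{G}^f$. For $\mathcal{G}_{\mathcal{F}}$: by construction it is a filter of ideals closed under superideals, and the corresponding finitely generated ideals form a basis. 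Closure under products follows directly from $U_I \cap U_J = U_{IJ}$ and closure of $\mathcal{F}$ under meets, whence Lemma~\ref{L:Gabriel-finite} identifies $\mathcal{G}_{\mathcal{F}}$ as a Gabriel topology.

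Finally, I would check the round-trip identities. The inclusion $\mathcal{G} \supseteq \mathcal{G}_{\mathcal{F}_{\mathcal{G}}}$ and $\mathcal{F} \supseteq \mathcal{F}_{\mathcal{G}_{\mathcal{F}}}$ are immediate. The reverse inclusions each reduce once more to the same radical trick: for any $I$ in $\mathcal{G}$, by the finite-basis hypothesis there is a finitely generated $J \subseteq I$ with $J \in \mathcal{G}^f$, and thus $U_J \in \mathcal{F}_{\mathcal{G}}$ witnesses $I \in \mathcal{G}_{\mathcal{F}_{\mathcal{G}}}$; and for $U_I \in \mathcal{F}$ with $I$ finitely generated, the ideal $I$ lies in $\mathcal{G}_{\mathcal{F}}^f$ tautologically. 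The main (modest) obstacle is the upward-closure step for $\mathcal{F}_\mathcal{G}$, since this is the only place where one really needs both the finite generation of ideals in the basis and the product-closure axiom of a Gabriel topology simultaneously; everything else is essentially formal.
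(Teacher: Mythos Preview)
Your proposal is correct and follows essentially the same approach as the paper: both rely on the identification of quasi-compact opens with $U_I$ for finitely generated $I$, the radical characterization $U_I \subseteq U_{I'} \Leftrightarrow I^n \subseteq I'$ for some $n$, and the identity $U_{I_1 I_2} = U_{I_1} \cap U_{I_2}$, invoking Lemma~\ref{L:Gabriel-finite} for the Gabriel-topology axioms. The paper's proof is simply terser, leaving the well-definedness and round-trip checks implicit, whereas you spell them out.
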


\begin{proof}
Quasi-compact Zariski open sets are precisely those of the form $U_I = \spec R \setminus V(I)$ for a finitely generated ideal $I \subseteq R$. Moreover, $U_I \subseteq U_{I'}$ if and only if $V(I) \supseteq V(I')$ if and only if
$$ I \subseteq \bigcap_{\mathfrak{p}\supseteq I'} \mathfrak{p} = \sqrt{I'} $$
if and only if $I^n \subseteq I'$ for some $n \ge 1$. Since also $U_{I_1\!\cdot\!I_2} = U_{I_1} \cap U_{I_2}$, the correspondence which assigns to a Gabriel topology $\mathcal{G}$ as in (1) the filter $\{ U_I \mid I \in \mathcal{G} \}$ of the lattice of quasi-compact open sets is a bijective.
\end{proof}

If we combine the discussion above with Corollary~\ref{C:dual}, we obtain the following pa\-ra\-met\-ri\-za\-tion of hereditary torsion pairs of finite type (see \cite{AH,H} for closely related results and compare also to \cite[Proposition VI.6.15]{St}). Here, if $M\in\ModR$, $\Supp(M) = \{ \mathfrak{p} \in \spec R \mid M_\mathfrak{p} \ne 0 \}$ denotes as usual the support of $M$.

\begin{prop} \label{P:torsion-Thomason}
Given a commutative ring $R$, there is a bijective correspondence between
\begin{enumerate}
\item hereditary torsion pairs $(\mathcal{T},\mathcal{F})$ of finite type, and
\item Thomason subsets of $\spec R$.
\end{enumerate}
Given a torsion pair $(\mathcal{T},\mathcal{F})$, we assign to it the subset $X = \bigcup_{T \in \mathcal T} \Supp(T)$. Conversely, if $X$ is a Thomason set, we put $\mathcal{T} = \{ T \in \ModR \mid \Supp(T) \subseteq X \}$.
\end{prop}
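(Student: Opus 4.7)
The plan is to stitch together the three bijections already established. Proposition~\ref{P:torsion} (conditions (1) and (3)) bijects hereditary torsion pairs of finite type with Gabriel topologies having a basis of finitely generated ideals; Lemma~\ref{L:to-lattice} further identifies these with filters in the lattice of quasi-compact Zariski opens; and Corollary~\ref{C:dual} translates such filters into Thomason subsets of $\spec R$. Composing these yields a bijection as required, so the remaining task is just to verify that the composite matches the explicit recipe of the proposition.

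Tracking $(\mathcal{T},\mathcal{F})$ through the chain produces the Thomason set $X=\bigcup_{J\in\mathcal{G}^f}V(J)$, where $\mathcal{G}^f$ is the set of finitely generated $J$ with $R/J\in\mathcal{T}$. I would verify $X=\bigcup_{T\in\mathcal{T}}\Supp(T)$ directly: the inclusion $\subseteq$ is immediate from $\Supp(R/J)=V(J)$, and for the reverse I would pick $\mathfrak{p}\in\Supp(T)$ and $t\in T$ with $t/1\neq 0$ in $T_\mathfrak{p}$, so that $\ann(t)\subseteq\mathfrak{p}$ and $Rt\cong R/\ann(t)\in\mathcal{T}$ by heredity; the finitely generated basis of $\mathcal{G}$ then produces $J\in\mathcal{G}^f$ with $J\subseteq\ann(t)\subseteq\mathfrak{p}$, whence $\mathfrak{p}\in V(J)\subseteq X$.

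Conversely, the chain sends a Thomason $X$ to the torsion class $\mathcal{T}'$ corresponding to the Gabriel topology $\mathcal{G}_X=\{I\mid \exists \text{ f.g. } J\subseteq I \text{ with } V(J)\subseteq X\}$, so that $T\in\mathcal{T}'$ iff $\ann(t)\in\mathcal{G}_X$ for every $t\in T$. The inclusion $\mathcal{T}'\subseteq\{T\mid\Supp(T)\subseteq X\}$ is immediate: a finitely generated $J\subseteq\ann(t)$ with $V(J)\subseteq X$ forces $V(\ann(t))\subseteq V(J)\subseteq X$, and so $\Supp(T)=\bigcup_tV(\ann(t))\subseteq X$. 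The reverse inclusion is the main obstacle and reduces to the following key statement: \emph{for every ideal $I\subseteq R$ with $V(I)\subseteq X$ there exists a finitely generated $J\subseteq I$ with $V(J)\subseteq X$.}

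I would settle this key lemma by a compactness argument in the patch (constructible) topology on $\spec R$, i.e.\ the topology generated by both the Zariski basic opens $D(r)$ and their complements $V(r)$; classically $\spec R$ is compact (a Stone space) in this topology. Writing a Thomason $X$ as $\bigcup_\alpha V(K_\alpha)$ for finitely generated ideals $K_\alpha$, the complement $\spec R\setminus X=\bigcap_\alpha U_{K_\alpha}$ is closed in the Hochster dual topology, hence patch-closed, hence patch-compact. The hypothesis $V(I)\subseteq X$ means that the family $\{D(r)\mid r\in I\}$ of patch-opens covers $\spec R\setminus X$; extracting a finite subcover $D(r_1)\cup\cdots\cup D(r_n)\supseteq\spec R\setminus X$ translates to $V((r_1,\ldots,r_n))\subseteq X$, so $J=(r_1,\ldots,r_n)\subseteq I$ is the required finitely generated subideal. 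The crux is precisely this key lemma, and the patch topology appears unavoidable since $\spec R\setminus X$ need not be quasi-compact in the Zariski topology alone.
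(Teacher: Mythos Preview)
Your proposal is correct and follows the same route as the paper: compose Proposition~\ref{P:torsion}, Lemma~\ref{L:to-lattice} and Corollary~\ref{C:dual} to obtain the bijection, then check that the composite matches the explicit support formulas. For the converse direction the paper is much terser---it notes that $\mathcal{T}'=\{T\mid\Supp T\subseteq X\}$ is a hereditary torsion class containing $R/I$ (for finitely generated $I$) exactly when $V(I)\subseteq X$, and then invokes the bijection to conclude $\mathcal{T}=\mathcal{T}'$; read literally this presupposes that $\mathcal{T}'$ is of finite type, which is precisely the content of your key lemma. Your patch-topology argument (that $\spec R\setminus X$ is an intersection of quasi-compact opens, hence patch-closed, hence patch-compact, so the cover $\{D(r)\mid r\in I\}$ admits a finite subcover) is a clean and standard way to supply this step, so you have made explicit a compactness point the paper leaves to the reader rather than taken a different route.
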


\begin{proof}
The fact that there is a bijective correspondence between (1) and (2) follows by combining Proposition~\ref{P:torsion}, Lemma~\ref{L:to-lattice} and Corollary~\ref{C:dual}. The particular correspondence given by the three statements is rather explicit. To $(\mathcal{T},\mathcal{F})$ we assign the Thomason set $X$ of all prime ideals $\mathfrak{p} \in \spec R$ for which there exists a finitely generated ideal $I \subseteq \mathfrak{p}$ with $R/I \in \mathcal{T}$. Conversely, to a Thomason set $X$ we assign the unique hereditary torsion pair of finite type such that, given a finitely generated ideal $I$, $R/I$ is torsion precisely when $V(I) \subseteq X$.

Suppose now that $(\mathcal{T},\mathcal{F})$ is of finite type and $X$ is the corresponding Thomason set. Since every object in $\mathcal{T}$ is an epimorphic image of a coproduct of cyclic modules contained in $\mathcal{T}$, we have $X = \bigcup_{R/I \in \mathcal{T}} \Supp(R/I) = \bigcup_{T \in \mathcal T} \Supp(T)$, as required. Next let $\mathcal{T}' = \{ T \in \ModR \mid \Supp(T) \subseteq X \}$. This is clearly a hereditary torsion class and, given a finitely generated ideal $I \subseteq R$, we have $R/I \in \mathcal{T'}$ if and only if $\Supp R/I \subseteq X$. As $\Supp R/I = V(I)$ in this case, $\mathcal{T'}$ is also sent to $X$ under the bijective correspondence, and hence $\mathcal{T} = \mathcal{T'}$.
\end{proof}

We conclude the discussion with a description of the torsion-free classes under the correspondence from Proposition~\ref{P:torsion-Thomason}. This was important in the classification of tilting classes in the noetherian case~\cite{APST} and the current version comes from~\cite{H}.

\begin{definition}[{\cite[\S3.2]{H}}] \label{D:vaguely-ass}
Let $M$ be a module over a commutative ring. A prime ideal $\mathfrak{p}$ is \emph{vaguely associated} to $M$ if the smallest class of modules containing $M$ and closed under submodules and direct limits contains $R/\mathfrak{p}$. The set of primes vaguely associated to $M$ is denoted by $\vass(M)$.
\end{definition}

If $R$ is in addition noetherian, $\vass(M)$ coincides with the set of usual associated prime ideals by~\cite[Lemma 3.8]{H}. The following proposition then generalizes~\cite[Proposition 2.3(iii)]{APST}.

\begin{prop} \label{P:torsion-free-Thomason}
Let $R$ be a commutative ring and $(\mathcal{T},\mathcal{F})$ a hereditary torsion pair of finite type. If $X$ is the Thomason set assigned to the torsion pair by Proposition~\ref{P:torsion-Thomason}, then
$$ \mathcal{F} = \{ F \in \ModR \mid \vass(M) \cap X = \emptyset \}. $$
\end{prop}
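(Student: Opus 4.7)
The plan is to prove the two inclusions separately, using the explicit Thomason--Gabriel correspondence of Proposition~\ref{P:torsion-Thomason} together with two general properties of $\vass$ established in~\cite{H}: (a) $\vass(M) \neq \emptyset$ for every nonzero $R$-module $M$, and (b) $\vass(M) \subseteq \Supp(M)$. Property (b) is essentially formal: localizing at $\mathfrak{p}$ preserves submodules and direct limits, so if $M_\mathfrak{p}=0$ then the smallest class containing $M$ and closed under these operations localizes to $0$ at $\mathfrak{p}$, hence cannot contain $R/\mathfrak{p}$ since $(R/\mathfrak{p})_\mathfrak{p} = \kappa(\mathfrak{p}) \neq 0$. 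Property (a) is the non-noetherian analogue of the classical existence of associated primes and is the step that makes essential use of closure under direct limits in the definition of $\vass$.

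For the inclusion $\mathcal{F} \subseteq \{F \mid \vass(F) \cap X = \emptyset\}$, I would first observe that $\mathcal{F}$ is closed under submodules (by hereditarity) and under direct limits (by the finite-type hypothesis, via Proposition~\ref{P:torsion}). Hence the smallest class containing $F$ closed under these two operations lies inside $\mathcal{F}$, so $R/\mathfrak{p} \in \mathcal{F}$ for every $\mathfrak{p} \in \vass(F)$. If such a $\mathfrak{p}$ were in $X$, Definition~\ref{D:Thomason} would produce a finitely generated $I \subseteq \mathfrak{p}$ with $V(I) \subseteq X$, and Proposition~\ref{P:torsion-Thomason} would identify this with $R/I \in \mathcal{T}$. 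Then $R/\mathfrak{p}$, being a cyclic quotient of $R/I$, would lie in $\mathcal{T}\cap\mathcal{F}=0$, contradicting the primality of $\mathfrak{p}$.

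For the reverse inclusion I would proceed contrapositively: if $F \notin \mathcal{F}$, pick a nonzero element $x$ of the torsion part of $F$, so that $xR \cong R/\Ann(x)$ is a nonzero torsion cyclic submodule and $\Ann(x)$ belongs to the Gabriel topology $\mathcal{G}$. Proposition~\ref{P:torsion}(3) then supplies a finitely generated $J \in \mathcal{G}$ with $J \subseteq \Ann(x)$, and the correspondence of Proposition~\ref{P:torsion-Thomason} gives $\Supp(xR) = V(\Ann(x)) \subseteq V(J) \subseteq X$. Applying (a) and (b) to the nonzero module $xR$ yields $\emptyset \neq \vass(xR) \subseteq \Supp(xR) \subseteq X$, while monotonicity of $\vass$ under the inclusion $xR \hookrightarrow F$---immediate from the definition, since any submodule of $F$ belongs to the smallest class containing $F$ closed under submodules and direct limits---gives $\vass(xR) \subseteq \vass(F)$. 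Hence $\vass(F) \cap X \neq \emptyset$, contradicting the assumption. The main obstacle is the invocation of (a): over a noetherian ring one finds associated primes by choosing a maximal annihilator, but in general a nonzero module need not contain any cyclic submodule of the form $R/\mathfrak{p}$, and one must genuinely exploit the direct-limit closure in the definition of $\vass$ to produce such a prime.
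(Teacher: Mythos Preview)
Your proof is correct and follows essentially the same strategy as the paper's proof, which also defers the substantive work to results in~\cite{H}. The only notable difference is in the first inclusion: where you argue directly that $R/\mathfrak{p}$, being a quotient of a torsion cyclic $R/I$, lands in $\mathcal{T}\cap\mathcal{F}=0$, the paper instead invokes \cite[Lemma~3.10]{H} (that $\Hom_R(M,R/\mathfrak{p})\ne 0$ iff $\mathfrak{p}\in\Supp(M)$ for finitely generated $M$) to reach the same contradiction; for the reverse inclusion both you and the paper appeal to the nontrivial existence result for vaguely associated primes from~\cite{H}.
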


\begin{proof}
This has been proved in~\cite{H}. Namely, given $\mathfrak{p} \in \spec R$ and a finitely generated $R$-module $M$, then $\Hom_R(M,R/\mathfrak{p})\ne 0$ if and only if $\mathfrak{p}\in\Supp(M)$ by \cite[Lemma 3.10]{H}. Then necessarily $\vass(M) \cap X = \emptyset$ for each $F \in \mathcal{F}$. Conversely, suppose that $F \not\in \mathcal{F}$. Then there is an embedding $i\colon R/J \to F$ with $R/J \in \mathcal{T}$. By the proof of \cite[Lemma 3.2]{H} there exists $\mathfrak{p} \in \vass(M)$ such that $\mathfrak{p}\supseteq I$. That is, $\mathfrak{p} \in \vass(M) \cap X$.
\end{proof}

\section{Generalized grade of a module}
\label{sec:grade}
A very important concept in homological algebra for modules over commutative noetherian rings is the one of a regular sequence and of the grade of a module. The maximal length of a regular sequence in a given ideal has various homological characterizations. Appropriate forms of these characterizations still remain equivalent over non-noetherian commutative rings which will be useful for us. We shall give details in this section.

We shall use the following notation here. Given $M \in \ModR$ and $i \geq 0$, we denote by $\Omega^i(M)$ the $i$-th syzygy of $M$ (uniquely determined only up to projective equivalence). If $i < 0$, we let $\Omega^{i} M$ stand for the minimal $\lvert i \rvert$-th cosyzygy of $M$ (determined uniquely as a module).

\subsection{Derived categories and truncation of complexes}
\label{subsec:derived}

In this section it will also be useful to argue using the derived category $\Der(\ModR)$ of $\ModR$; see for instance~\cite[Chapter 13]{KS}. The suspension functor will be denoted by
$$ \Sigma\colon \Der(\ModR) \to \Der(\ModR) $$
and we will typically use the homological indexing of components of complexes:
$$ X\colon \quad \cdots \to X_{n+1} \xrightarrow{d_{n+1}} X_{n} \xrightarrow{d_{n}} X_{n-1} \xrightarrow{d_{n-1}} X_{n-2} \to \cdots $$

In this context, we will also use the homological truncation functors with respect to the (suspensions of the) canonical $t$-structure on $\Der(\ModR)$ (see~\cite[Examples 1.3.22]{BBD} or~\cite[\S12.3]{KS}). We shall slightly adapt our notation for it to be compatible with our homological indexing. Thus, given a complex $X \in \Der(\ModR)$ and $n \in \mathbb{Z}$, we shall denote by $X_{\ge n}$ the complex
$$ X_{\ge n}\colon \quad \cdots \to X_{n+1} \xrightarrow{d_{n+1}} X_{n} \xrightarrow{d_{n}} \im d_{n} \to 0 \to \cdots $$

The subcomplex inclusion $i\colon X_{\ge n}\colon \to X$ (when we consider the complexes in the usual category of complexes $\Ch(\ModR)$) induces an isomorphism on the $k$-th homology for each $k \ge n$ and $H_k(X_{\ge n}) = 0$ for all $k<n$. Dually, the projection morphism $p\colon X \to X/X_{\ge n}$ induces an isomorphism on the $k$-th homology for all $k<n$ and $H_k(X/X_{\ge n}) = 0$ whenever $k \ge n$.

In fact, $(X_{\ge n} \mid n \in \mathbb{Z})$ yields a filtration of $X$ in $\Ch(\ModR)$. Since the homologies of $X_{\ge n}/X_{\ge n+1}$ in degrees different from $n$ vanish, we have for each $n \in \mathbb{Z}$ a distinguished triangle
\begin{equation}
X_{\ge n+1} \to X_{\ge n} \to \Sigma^{n} H_n(X) \to \Sigma X_{\ge n+1}.
\label{E:1-step-filt}
\end{equation}
in $\Der(\ModR)$. This observation, which formalizes how a complex can be built by extension from its homology modules will be especially useful in~\S\ref{sec:cotilt}.

\subsection{Koszul complexes and Koszul (co)homology}
\label{subsec:Koszul}

A particularly useful class of complexes here will be the so-called Koszul complexes. Here $R$ will stand for a commutative ring.

\begin{definition}[{\cite[\S1.6]{BH}, \cite[\S8.2]{N}}]
Given $x \in R$, we define the \emph{Koszul complex} with respect to $x$, denoted $K_\bullet(x)$, as follows
$$0 \rightarrow R \xrightarrow{-\cdot x} R \rightarrow 0,$$
by which we mean a complex concentrated in degree 1 and 0, with the only non-zero map $R \rightarrow R$ being the multiplication by $x$. Here we use the homological indexing (i.e.\ the differential has degree $-1$).

More generally, given a finite sequence $\mathbf{x}=(x_1,x_2,\ldots,x_n)$ of elements of $R$, we define the complex $K_\bullet(\mathbf{x})$ as the tensor product $\bigotimes_{i=1}^n K_\bullet(x_i)$ of complexes of $R$-modules.
\end{definition}

In particular, $K_\bullet(\mathbf{x})$ is a complex of finitely generated free $R$-modules concentrated in degrees $0$ to $n$. Given an arbitrary module, we can define the corresponding Koszul homology and cohomology.

\begin{definition}
Given a finite sequence $\mathbf{x}=(x_1,x_2,\ldots,x_n)$ of elements of $R$, a module $M\in\ModR$, and $i \in \mathbb{Z}$, the \emph{$i$-th Koszul homology} of $\mathbf{x}$ with coefficients in $M$ is defined as
$$
H_i(\mathbf{x}; M) = H_i\big(K_\bullet(\mathbf{x}) \otimes_R M\big).
$$
Similarly, the \emph{$i$-th Koszul cohomology} of $\mathbf{x}$ with coefficients in $M$ is defined as
$$
H^i(\mathbf{x}; M) = H^i\big(\Hom_R(K_\bullet(\mathbf{x}), M)\big).
$$
\end{definition}

\begin{rem}
The Koszul cohomology has a particularly easy interpretation in the derived category $\Der(\ModR)$. Namely, we have
$$ H^i(\mathbf{x}; M) \cong \Hom_{\Der(\ModR)}(K_\bullet(\mathbf{x}), \Sigma^i M). $$
\end{rem}

Typically we will start with a finitely generated ideal $I = (x_1, \dots, x_n)$ of $R$ and we will consider the Koszul complex or homology or cohomology of $\mathbf{x}=(x_1,x_2,\ldots,x_n)$. These notions are \emph{not} invariant under change of the generating set of $I$---see~\cite[Proposition 1.6.21]{BH} for a precise discussion on how the complex changes. However, the following consequence of this discussion will be crucial for us and will make the particular choice of a finite generating set of $I$ unimportant.

\begin{prop}
Let $R$ be a commutative ring, $I$ be a finitely generated ideal and let $\mathbf{x}=(x_1,x_2,\ldots,x_n)$ and $\mathbf{y}=(y_1,y_2,\ldots,y_m)$ be two systems of generators of $I$. Given any integer $\ell \in \mathbb{Z}$, one has $H^i(\mathbf{x}; M) = 0$ for all $i \le \ell$ if and only if $H^i(\mathbf{y}; M) = 0$ for all $i \le \ell$.
\end{prop}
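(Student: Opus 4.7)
The plan is to reduce to the one-step case where one sequence is obtained from the other by adjoining a single redundant element, and then iterate. Form the concatenation $\mathbf{z}=(x_1,\ldots,x_n,y_1,\ldots,y_m)$. Since $\mathbf{x}$ generates $I$ and each $y_j\in I$, every intermediate sequence in the chain $\mathbf{x},\,(\mathbf{x},y_1),\,(\mathbf{x},y_1,y_2),\,\ldots,\,\mathbf{z},\,\ldots,\,(x_1,\mathbf{y}),\,\mathbf{y}$ still generates $I$, and consecutive sequences differ by an element lying in the ideal generated by the shorter one. Hence it suffices to prove the single-step statement: if $\mathbf{x}'$ is a finite sequence generating $I':=(\mathbf{x}')$ and $y\in I'$, then $H^i(\mathbf{x}';M)=0$ for all $i\le\ell$ if and only if $H^i(\mathbf{x}',y;M)=0$ for all $i\le\ell$.

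For the one-step case I would use the identity $K_\bullet(\mathbf{x}',y)\cong K_\bullet(\mathbf{x}')\otimes_R K_\bullet(y)$ together with the obvious short exact sequence of complexes
\[
0\longrightarrow R[0]\longrightarrow K_\bullet(y)\longrightarrow R[1]\longrightarrow 0,
\]
where $R[j]$ denotes $R$ placed in homological degree $j$. Tensoring over $R$ with $K_\bullet(\mathbf{x}')$ is exact (every term is free), after which $\Hom_R(-,M)$ produces a short exact sequence of cochain complexes, and passing to cohomology yields the standard long exact sequence
\[
\cdots\longrightarrow H^{i-1}(\mathbf{x}';M)\xrightarrow{\pm y}H^{i-1}(\mathbf{x}';M)\longrightarrow H^i(\mathbf{x}',y;M)\longrightarrow H^i(\mathbf{x}';M)\xrightarrow{\pm y}H^i(\mathbf{x}';M)\longrightarrow\cdots,
\]
in which the connecting homomorphisms are, up to sign, multiplication by $y$.

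The key observation is that $I'$ annihilates $H^*(\mathbf{x}';M)$: for each generator $x'_j$, the chain map given by multiplication by $x'_j$ is null-homotopic on $K_\bullet(x'_j)$, hence on $K_\bullet(\mathbf{x}')$ by the tensor decomposition, so $x'_j$ acts as zero on the Koszul cohomology. Since $y\in I'$, the connecting maps in the long exact sequence vanish, and it splits into short exact sequences
\[
0\longrightarrow H^{i-1}(\mathbf{x}';M)\longrightarrow H^i(\mathbf{x}',y;M)\longrightarrow H^i(\mathbf{x}';M)\longrightarrow 0.
\]
Reading these off, vanishing of the middle term for all $i\le\ell$ is equivalent to simultaneous vanishing of the left and right terms for all such $i$, which collapses to $H^i(\mathbf{x}';M)=0$ for all $i\le\ell$. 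Iterating this equivalence along the chain $\mathbf{x}\rightsquigarrow\mathbf{z}\rightsquigarrow\mathbf{y}$ closes the proof. I do not foresee a genuine obstacle; the only points requiring care are the bookkeeping of signs and shifts in the long exact sequence and the standard null-homotopy argument, both routine.
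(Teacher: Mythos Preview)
Your argument is correct and is essentially the proof underlying the references the paper invokes: the paper simply cites \cite[Corollary~1.6.22]{BH} together with \cite[Corollary~1.6.10(d)]{BH}, and what you wrote is precisely an unpacking of those two results---the annihilation of Koszul cohomology by the ideal, combined with the long exact sequence arising from adjoining a single redundant generator, iterated along the concatenated sequence. There is no substantive difference in strategy.
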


\begin{proof}
This is an immediate consequence of \cite[Corollary 1.6.22]{BH} and \cite[Corollary 1.6.10(d)]{BH}.
\end{proof}

Hence, we introduce the following slightly abused notation.

\begin{nota} \label{N:Koszul}
Given a commutative ring $R$ and a finitely generated ideal $I$, we always fix once and for ever a system of generators $\mathbf{x}=(x_1,x_2,\ldots,x_n)$. We then say that $K_\bullet(\mathbf{x})$ is the \emph{Koszul complex} of the ideal $I$ and denote it by $K_\bullet(I)$.

Similarly for $M \in \ModR$ and $i \in \mathbb{Z}$ we define the \emph{$i$-th Koszul homology} of $I$ with coefficients in $M$ as $H_i(I;M) = H_i(\mathbf{x};M)$, and similarly for the \emph{$i$-the Koszul cohomology} $H^i(I;M) = H^i(\mathbf{x};M)$.
\end{nota}

The following two well-known properties of Koszul complexes which will be important in our application.

\begin{lem} \label{L:Koszul-basic}
  Let $I$ be a finitely generated ideal. Then:
  \begin{enumerate}
    \item[(i)] $H_0(K_\bullet(I)) = R/I$,
    \item[(ii)] $I \subseteq \Ann\big(H_j(K_\bullet(I)\big)$ for all $k=0,1,\ldots,n$.
  \end{enumerate}
\end{lem}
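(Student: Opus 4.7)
The plan is to prove both parts directly from the definition of the Koszul complex as an iterated tensor product, so no machinery beyond Section~\ref{subsec:Koszul} is needed.

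For part~(i), I would spell out $K_\bullet(\mathbf{x}) = \bigotimes_{i=1}^n K_\bullet(x_i)$ in low degrees. In degree $0$ the tensor product gives $R$, and in degree $1$ it gives $\bigoplus_{i=1}^n R$, with the differential $d_1$ sending the $i$-th basis element to $x_i \in R$ (all other summands of the tensor product landing in degree $1$ use the degree-$0$ component $R$ of each $K_\bullet(x_i)$, so this map really is $(a_1,\dots,a_n) \mapsto \sum_i a_i x_i$). Hence $H_0(K_\bullet(\mathbf{x})) = \Coker d_1 = R/(x_1,\dots,x_n) = R/I$, which is exactly~(i).

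For part~(ii) the key input is the elementary observation that multiplication by $x_j$ on the two-term complex $K_\bullet(x_j)$ is null-homotopic: the identity map $R \to R$ viewed as a degree-$+1$ map $h$ from $K_\bullet(x_j)_0$ to $K_\bullet(x_j)_1$ satisfies $dh + hd = x_j \cdot \op{id}$ in both degrees (in degree $0$, $dh = x_j\op{id}$ and $hd=0$; in degree $1$, $dh=0$ and $hd = x_j \op{id}$). Since tensoring a null-homotopic chain map with the identity on another complex yields a null-homotopic chain map (the homotopy $h \otimes \op{id}$ still works), writing
\[
x_j \cdot \op{id}_{K_\bullet(\mathbf{x})} \;=\; \op{id}_{K_\bullet(x_1)} \otimes \cdots \otimes (x_j \cdot \op{id}_{K_\bullet(x_j)}) \otimes \cdots \otimes \op{id}_{K_\bullet(x_n)}
\]
shows that multiplication by $x_j$ on all of $K_\bullet(\mathbf{x})$ is null-homotopic. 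Therefore $x_j$ induces the zero map on every $H_i(K_\bullet(\mathbf{x}))$. Running $j$ over $1,\dots,n$ gives $I \subseteq \Ann H_i(K_\bullet(I))$ for every $i$, as required.

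There is really no serious obstacle in this lemma; both assertions are standard low-level facts. The only point requiring some care is the bookkeeping of homological versus cohomological indexing of $K_\bullet(x_j)$ and the correct sign/degree conventions for the chain homotopy, but once the explicit formula above is checked in both degrees, the tensor-product argument immediately transports the null-homotopy to $K_\bullet(\mathbf{x})$.
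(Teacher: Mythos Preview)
Your proof is correct and self-contained; the paper itself does not give an argument but simply refers to \cite[p.~360, (8.2.7) and p.~364, Theorem~3]{N} and \cite[Prop.~1.6.5(b)]{BH}. The null-homotopy argument you wrote for part~(ii) is exactly the one underlying those references, so there is no real methodological difference---you have just unpacked what the citations point to.
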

\begin{proof}
  See \cite[p. 360, (8.2.7) and p. 364, Theorem 3]{N} or \cite[Prop. 1.6.5(b)]{BH}.
\end{proof}

In fact, the latter lemma allows us to relate the Koszul cohomology of $I$ to ordinary $\ext$-groups. Namely, we have a map of complexes
\begin{equation}
\label{E:Koszul-derived}
q\colon K_\bullet(I) \to K_\bullet(I)/K_\bullet(I)_{\ge 1} \cong R/I,
\end{equation}
and if $M$ is an $R$-module, we can apply $\Hom_{\Der(\ModR)}(-,\Sigma^i M)$ to obtain a map
\begin{equation}
\label{E:Koszul-compare}
q^i_M\colon \ext^i_R(R/I,M) \to H^i(I;M).
\end{equation}
This map is natural in $M$ and compares the $\ext$-group to the Koszul cohomology with coefficients in $M$ (see also~\cite[Proposition 1.6.9]{BH}). It is easy to see that $q^0_M$ is always an isomorphism, but for $i > 0$ the relation is more complicated and will be studied in the next subsection. To get a quick impression of the potential difficulties, note that $H^i(I;M)$ always commutes with direct limits, while $\ext^i_R(R/I,-)$ need not if $R$ is not a coherent ring.

\subsection{More on the relation between the Koszul cohomology and Ext groups}
\label{subsec:Koszul-Ext}
Our strategy will be to try to quantify the difference between $K_\bullet(I)$ and $R/I$ in the derived category. We start with an easy lemma.

	\begin{lem}
			\label{L:vanish}
		Let $I$ be a finitely generated ideal. Suppose that $M$ is an $R$-module such that $M \in \bigcap_{i=0}^{n-1}\Ker\ext_R^i(R/I,-)$. Then $\ext_R^k(N,M)=0$ for all $k=0,1,\ldots,n-1$ and any $R/I$-module $N$.
	\end{lem}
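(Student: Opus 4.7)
The plan is to prove this by induction on $k$, where the engine is dimension shifting against a free resolution of $N$ computed in the category of $R/I$-modules. The hypothesis on $M$ controls $\ext_R^i(R/I,-)$, so the first observation is that it also controls $\ext_R^i(F,-)$ for any free $R/I$-module $F = (R/I)^{(\Lambda)}$, since
\[
\ext_R^i(F,M) \;\cong\; \prod_{\Lambda} \ext_R^i(R/I,M) \;=\; 0 \qquad (0 \le i \le n-1).
\]

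Given an arbitrary $R/I$-module $N$, I would pick a short exact sequence of $R/I$-modules
\[
0 \to N' \to F \to N \to 0
\]
with $F$ free over $R/I$, and apply $\Hom_R(-,M)$ to obtain the long exact sequence in $\ext_R^*(-,M)$. For the base case $k=0$, left exactness yields $\Hom_R(N,M) \hookrightarrow \Hom_R(F,M) = 0$. For the inductive step, the relevant fragment
\[
\ext_R^{k-1}(N',M) \longrightarrow \ext_R^{k}(N,M) \longrightarrow \ext_R^{k}(F,M)
\]
has right-hand term zero for $k \le n-1$ by the observation above, while the left-hand term vanishes by the induction hypothesis applied to the $R/I$-module $N'$ (which is an $R/I$-module because it is a submodule of one, and the range $k-1 \le n-2$ is still within the allowed range). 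Hence $\ext_R^k(N,M)=0$ for every $k \le n-1$.

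There is essentially no obstacle of substance here; the only thing to be careful about is making sure that the inductive hypothesis is applied to $N'$ \emph{as an $R/I$-module} (so that the ambient $\Hom$ and $\ext$ groups really are over $R$, not over $R/I$), and that the range of $k$ is preserved under the shift $k \mapsto k-1$. Both are automatic. Alternatively, one could invoke the change-of-rings (Grothendieck) spectral sequence
\[
E_2^{p,q} \;=\; \ext_{R/I}^{p}\bigl(N,\ext_R^{q}(R/I,M)\bigr) \;\Longrightarrow\; \ext_R^{p+q}(N,M),
\]
noting that the hypothesis forces $E_2^{p,q}=0$ whenever $p+q \le n-1$ (since then $q \le n-1$), and the conclusion follows immediately; but the elementary induction above avoids the spectral sequence machinery and matches better with the style of the surrounding section.
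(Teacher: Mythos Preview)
Your proof is correct and follows essentially the same approach as the paper's: induct on $k$ using a short exact sequence $0 \to N' \to (R/I)^{(\Lambda)} \to N \to 0$ of $R/I$-modules and the long exact sequence in $\ext_R^*(-,M)$. Your write-up is simply more detailed (the paper compresses the argument into two sentences), and the spectral sequence alternative you mention is a valid but heavier route to the same conclusion.
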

	\begin{proof}
		Since $N$ is an $R/I$-module, there is an exact sequence
		$$0 \rightarrow K \rightarrow R/I^{(\varkappa)} \rightarrow N \rightarrow 0.$$
		The lemma is then proved by applying $\Hom_R(-,M)$ on this sequence and a straightforward induction on $k$, using the fact that $K$ is also an $R/I$-module.
		\end{proof}

  The last result extends in a straightforward way to the derived category.
	
	\begin{lem}
			\label{L:vanish-derived}
		Let $I$ be a finitely generated ideal and suppose that $M$ is an $R$-module such that $M \in \bigcap_{i=0}^{n-1}\Ker\ext_R^i(R/I,-)$. If $X \in \Der(\ModR)$ is a complex such that $H_k(X) = 0$ for $k \le -n$ and $I \subseteq \Ann\big(H_k(X)\big)$ for $k = -n+1,\ldots,-1,0$. Then $\Hom_{\Der(\ModR)}(X,M) = 0$. 
	\end{lem}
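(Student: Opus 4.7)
The plan is to perform a dévissage of $X$ along its canonical truncation filtration, in order to reduce the vanishing of $\Hom_{\Der(\ModR)}(X,M)$ to the vanishing of $\ext_R^{-k}(H_k(X),M)$ for $k\in\{-n+1,\dots,0\}$, where Lemma~\ref{L:vanish} applies because each $H_k(X)$ is an $R/I$-module and the relevant indices satisfy $-k\in\{0,\dots,n-1\}$.

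First I would strip off the positive-degree part of $X$. Applying $\Hom_{\Der(\ModR)}(-,M)$ to the distinguished triangle $X_{\ge 1}\to X\to X/X_{\ge 1}\to\Sigma X_{\ge 1}$ produces an exact sequence whose outer terms are $\Hom_{\Der(\ModR)}(\Sigma X_{\ge 1},M)$ and $\Hom_{\Der(\ModR)}(X_{\ge 1},M)$. Both vanish: $X_{\ge 1}$ (and hence $\Sigma X_{\ge 1}$) has homology concentrated in strictly positive degrees, so up to quasi-isomorphism it can be represented by a complex with zero components in degrees $\le 0$; any chain map from such a representative into an injective resolution of $M$, which lives in non-positive degrees, must be zero. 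Equivalently, this is the orthogonality property of the standard $t$-structure on $\Der(\ModR)$ applied to $M$, which lies in the heart. Writing $Y := X/X_{\ge 1}$, we therefore get $\Hom_{\Der(\ModR)}(X,M)\cong\Hom_{\Der(\ModR)}(Y,M)$; moreover, since $H_k(X) = 0$ for $k\le -n$ by hypothesis, one has $Y\simeq Y_{\ge -n+1}$ in $\Der(\ModR)$.

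For the core step I would proceed by downward induction on $k\in\{1,0,-1,\dots,-n+1\}$, proving $\Hom_{\Der(\ModR)}(Y_{\ge k},M)=0$. The base case $k=1$ is immediate since $Y_{\ge 1}$ has vanishing homology and so is zero in $\Der(\ModR)$. For the inductive step, the distinguished triangle~(\ref{E:1-step-filt}) applied to $Y$ yields the exact sequence
\[
\Hom_{\Der(\ModR)}(\Sigma^k H_k(Y),M)\to\Hom_{\Der(\ModR)}(Y_{\ge k},M)\to\Hom_{\Der(\ModR)}(Y_{\ge k+1},M).
\]
The right-hand term is zero by induction, while the left-hand term equals $\ext_R^{-k}(H_k(Y),M)$ with $H_k(Y)=H_k(X)$. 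The assumption $I\subseteq\Ann(H_k(X))$ makes this homology into an $R/I$-module and $-k\in\{0,\dots,n-1\}$, so Lemma~\ref{L:vanish} kills the $\ext$-group. Running the induction down to $k=-n+1$ yields $\Hom_{\Der(\ModR)}(Y,M)=0$, and combining with the first reduction gives $\Hom_{\Der(\ModR)}(X,M)=0$.

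The only subtle point I foresee is the first reduction, where one must justify carefully that morphisms in $\Der(\ModR)$ from a complex supported in strictly positive homological degrees into a module placed in degree $0$ vanish. This is a direct consequence of the orthogonality of the standard $t$-structure, but it needs to be phrased so as to respect the paper's homological conventions for truncations. Everything else is a routine piecing together of the canonical filtration from~\S\ref{subsec:derived} with Lemma~\ref{L:vanish}.
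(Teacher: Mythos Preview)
Your proof is correct and follows essentially the same approach as the paper: both use the canonical truncation filtration and the triangles~\eqref{E:1-step-filt} to reduce to the vanishing of $\ext_R^{-k}(H_k(X),M)$, which is handled by Lemma~\ref{L:vanish}. The only difference is cosmetic: the paper works directly with the filtration $X_{\ge k}$ of $X$ (starting the induction from $\Hom_{\Der(\ModR)}(X_{\ge 1},M)=0$ and ending at $X_{\ge -n+1}\simeq X$), whereas you first pass to the quotient $Y=X/X_{\ge 1}$ and then run the same induction on $Y_{\ge k}$; your detour through $Y$ is unnecessary but harmless.
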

	
	\begin{proof}
		Using the notation from \S\ref{subsec:derived} for truncations of complexes, we first recall the well-known fact that $\Hom_{\Der(\ModR)}(X_{\ge1},M)=0$; see~\cite[Propositions 3.1.8 and 3.1.10]{KS}. Since by our assumption $\Hom_{\Der(\ModR)}(\Sigma^k H_k(X),M) = 0$ for $k = -n+1,\ldots,-1,0$, a straightforward induction using the triangles from~\eqref{E:1-step-filt} in~\S\ref{subsec:derived} shows that $\Hom_{\Der(\ModR)}(X_{\ge-n+1},M) = 0$. Finally, the inclusion $X_{\ge-n+1} \to X$ is an isomorphism in $\Der(\ModR)$ since we assume that $H_k(X) = 0$ for $k \le -n$.
	\end{proof}

The latter lemma implies that the comparison map in~\eqref{E:Koszul-compare} in \S\ref{subsec:Koszul} between the $i$-th Koszul cohomology of $I$ and $\ext^i_R(R/I,M)$ is an isomorphism under certain assumptions.

\begin{prop} \label{P:Koszul-ext}
Let $R$ be a commutative ring and $I$ be a finitely generated ideal of $R$. Suppose that $M$ is a module such that $M \in \bigcap_{i=0}^{n-1} \Ker\ext_R^i(R/I,-)$ for some $n \geq 0$. Then $q^n_M\colon \ext_R^n(R/I,M) \to H^n(I;M)$ is an isomorphism.
\end{prop}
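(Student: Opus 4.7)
The plan is to realize $q^n_M$ as the middle map in a long exact sequence obtained from a distinguished triangle containing the morphism $q$ of~\eqref{E:Koszul-derived}, and then kill the two flanking terms by applying Lemma~\ref{L:vanish-derived}. Since $H_0(K_\bullet(I)) = R/I$ by Lemma~\ref{L:Koszul-basic}(i), the short exact sequence of complexes $0 \to K_\bullet(I)_{\ge 1} \to K_\bullet(I) \to R/I \to 0$ from \S\ref{subsec:derived} gives rise to a distinguished triangle
\[
K_\bullet(I)_{\ge 1} \longrightarrow K_\bullet(I) \xrightarrow{q} R/I \longrightarrow \Sigma K_\bullet(I)_{\ge 1}
\]
in $\Der(\ModR)$, whose middle morphism is precisely $q$. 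Applying the contravariant cohomological functor $\Hom_{\Der(\ModR)}(-,\Sigma^n M)$ yields a long exact sequence containing the segment
\begin{multline*}
\Hom_{\Der(\ModR)}(K_\bullet(I)_{\ge 1},\Sigma^{n-1}M) \to \ext_R^n(R/I,M) \xrightarrow{q^n_M} H^n(I;M) \\
\to \Hom_{\Der(\ModR)}(K_\bullet(I)_{\ge 1},\Sigma^{n}M),
\end{multline*}
so it suffices to verify that the two outer Hom-groups vanish.

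Both vanishings should be direct applications of Lemma~\ref{L:vanish-derived} after an appropriate shift. By Lemma~\ref{L:Koszul-basic}(ii) the non-zero homologies of $K_\bullet(I)_{\ge 1}$ lie in degrees $1,\ldots,n$ and are all $R/I$-modules. For the vanishing on the right, I would apply the lemma to $X = \Sigma^{-n} K_\bullet(I)_{\ge 1}$: its non-zero homology is concentrated in degrees $\{1-n,\ldots,0\}$ and consists of $R/I$-modules, which matches the lemma's hypotheses verbatim. For the vanishing on the left, apply the lemma to $X = \Sigma^{-(n-1)} K_\bullet(I)_{\ge 1}$, whose homology sits in degrees $\{2-n,\ldots,1\}$; in non-positive degrees this homology lies in $\{2-n,\ldots,0\}$ and consists of $R/I$-modules, and it vanishes in degrees $\le -n$, so Lemma~\ref{L:vanish-derived} still applies. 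The ``stray'' $H_1$ is harmless because the hypotheses of the lemma only constrain homology in non-positive degrees.

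The main subtlety is precisely this last bookkeeping point: one needs to notice that Lemma~\ref{L:vanish-derived} is insensitive to positive-degree homology, since such homology lives in the truncation $X_{\ge 1}$ which is automatically $\Der(\ModR)$-orthogonal to the module $M$ (as in the proof of the lemma). Beyond verifying this, the argument is essentially formal, once one has set up the triangle correctly and tracked the shifts.
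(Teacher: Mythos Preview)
Your proof is correct and follows essentially the same route as the paper: form the triangle $K_\bullet(I)_{\ge 1} \to K_\bullet(I) \xrightarrow{q} R/I \to \Sigma K_\bullet(I)_{\ge 1}$, apply $\Hom_{\Der(\ModR)}(-,\Sigma^n M)$, and kill the flanking terms via Lemma~\ref{L:vanish-derived}. One minor slip: the non-zero homologies of $K_\bullet(I)_{\ge 1}$ lie in degrees $1,\dots,m$ where $m$ is the number of chosen generators of $I$, not the $n$ from the statement; this is harmless since, as you correctly observe, only the lower bound and the $R/I$-module structure matter for invoking the lemma.
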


\begin{proof}
Since the two rightmost terms $F_1 \xrightarrow{d_1} F_0$ of $K_\bullet(I)$ constitute a projective presentation of $R/I$, it is clear that $q^0_M\colon H^0(I;M) \to \Hom_R(R/I,M)$ is always an isomorphism. This proves the lemma for $n=0$.

If $n>0$, we consider the triangle
$$
K_\bullet(I)_{\ge 1} \to K_\bullet(I) \xrightarrow{q} R/I \to \Sigma K_\bullet(I)_{\ge 1}
$$
in $\Der(\ModR)$ induced by the short exact sequence of complexes in the first three terms (see also~\eqref{E:Koszul-derived} in \S\ref{subsec:Koszul}). If we apply $\Hom_{\Der(\ModR)}(-,\Sigma^n M)$, we obtain an exact sequence of abelian groups
$$
\Hom(\Sigma K_\bullet(I)_{\ge 1},\Sigma^n M) \to \ext^n(R/I,M) \xrightarrow{q^n_M} H^n(I;M) \to \Hom(K_\bullet(I)_{\ge 1},\Sigma^n M).
$$
Since all the homologies of $K_\bullet(I)_{\ge 1}$ are $R/I$-modules, the leftmost and the rightmost term vanish by Lemma~\ref{L:vanish-derived} provided that $M \in \bigcap_{i=0}^{n-1} \Ker\ext_R^i(R/I,-)$. Hence $q^n_M$ is an isomorphism in such a case.
	\end{proof}

An immediate but particularly useful consequence is the following identification.

\begin{cor} \label{C:Koszul-vanish}
If $R$ is a commutative ring, then
$$
\bigcap_{i=0}^{n-1}\Ker\ext_R^i(R/I,-) = \bigcap_{i=0}^{n-1}\Ker H^i(I;-)
$$
as subcategories of $\ModR$ for each finitely generated ideal $I$ and $n \ge 0$.
\end{cor}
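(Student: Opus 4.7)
The plan is to establish the equality by induction on $n$, with Proposition~\ref{P:Koszul-ext} serving as the essential input. The base case $n=0$ is trivial: both intersections are empty, so each equals the whole category $\ModR$.

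For the inductive step, assume the equality holds for some $n\ge0$. Given $M\in\ModR$, I would show that $M$ lies in $\bigcap_{i=0}^{n}\Ker\ext_R^i(R/I,-)$ if and only if $M$ lies in $\bigcap_{i=0}^{n}\Ker H^i(I;-)$. In either direction, the first $n$ vanishing conditions (indices $i=0,\dots,n-1$) on one side put $M$, by the induction hypothesis, into $\bigcap_{i=0}^{n-1}\Ker\ext_R^i(R/I,-)$. This is exactly the hypothesis of Proposition~\ref{P:Koszul-ext} applied in degree $n$, so the comparison map
\[
q^n_M\colon \ext_R^n(R/I,M) \to H^n(I;M)
\]
is an isomorphism. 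The remaining vanishing condition (the one in degree $n$) on either side therefore transfers to the other side, completing the induction.

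I do not expect any genuine obstacle here: the corollary is essentially a repackaging of Proposition~\ref{P:Koszul-ext} by iterating it up the degrees. The only subtle point is that the proposition's isomorphism $q^n_M$ requires the vanishing of \emph{all} lower $\ext$-groups $\ext_R^i(R/I,M)$ for $i<n$, and one must ensure that this hypothesis is available from \emph{either} of the two characterizations being compared. This is precisely what the induction hypothesis supplies, by allowing one to bootstrap from the equivalence in degrees $<n$ to the equivalence in degree $n$.
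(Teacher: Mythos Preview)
Your proposal is correct and is exactly the argument the paper has in mind: the corollary is stated as an ``immediate consequence'' of Proposition~\ref{P:Koszul-ext}, and the induction you spell out is precisely how one unpacks that immediacy.
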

\begin{rem}
\label{R:Koszul-tor}
The dual versions of results from \S\ref{subsec:Koszul-Ext} also hold, and will be used in \S\ref{sec:derived}. Explicitly, for each finitely generated ideal $I$ and $n \geq 0$ we have 
$$
\bigcap_{i=0}^{n-1}\Ker\tor_i^R(R/I,-) = \bigcap_{i=0}^{n-1}\Ker H_i(I;-),
$$
and for any module $M$ belonging to this class, there is a natural isomorphism 
$$q^M_n: H_n(I;M) \rightarrow \tor_n^R(R/I,M),$$ 
obtained by applying $H_n(- \otimes_R^\mathbb{L} M)$ onto (\ref{E:Koszul-derived}). This can be proven either directly using similar arguments as in this section, or it follows by using the elementary duality $(-)^+$ (see \S\ref{sec:tilt} for the definition). Indeed, using the $\Hom$-$\otimes$-adjunction and exactness of the elementary duality, we have for any $M \in \bigcap_{i=0}^{n-1}\Ker\tor_i^R(R/I,-)$ that $M^+ \in \bigcap_{i=0}^{n-1}\Ker\ext_R^i(R/I,-)$. It is straightforward to check that the same properties ensure that $(q^M_n)^+$ is naturally equivalent to $q_{M^+}^n$, which is an isomorphism by Proposition~\ref{P:Koszul-ext}. Since $(-)^+$ is exact and faithful, we conclude that $q^M_n$ is an isomorphism.
	\end{rem}
\subsection{Vaguely associated primes revisited}

Now we give another way to express the very same class as in Corollary~\ref{C:Koszul-vanish} by giving a homological generalization of Proposition~\ref{P:torsion-free-Thomason}. We start with an easy observation

\begin{lem} \label{L:basic-hered-torsion}
Let $R$ be a commutative ring, $I$ be a finitely generated ideal. Then $\mathcal{F} = \Ker\Hom_R(R/I,-)$ is a hereditary torsion-free class of finite type whose corresponding Thomason set (in the sense of Proposition~\ref{P:torsion-Thomason}) is $V(I)$.
\end{lem}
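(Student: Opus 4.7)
The plan is to verify the claim in two steps. First, I show that $\mathcal{F}$ is the torsion-free class of a hereditary torsion pair of finite type by direct appeal to Propositions~\ref{P:fin-to-hered} and~\ref{P:torsion}. Second, I identify the associated Thomason set as $V(I)$ by a short prime-ideal computation using the explicit form of the correspondence in Proposition~\ref{P:torsion-Thomason}.

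For the first step, since $I$ is finitely generated, the module $R/I$ is finitely presented. The class $\mathcal{F} = \Ker\Hom_R(R/I,-)$ is clearly closed under products, submodules, and extensions (the last by left exactness of $\Hom$), so by the characterization of torsion-free classes recalled at the beginning of~\S\ref{sec:torsion-comm}, it is the torsion-free half of some torsion pair $(\mathcal{T},\mathcal{F})$ in $\ModR$. Taking $\mathcal{T}_0 = \{R/I\}$, Proposition~\ref{P:fin-to-hered} immediately gives that $(\mathcal{T},\mathcal{F})$ is hereditary, and the very same set-up verifies condition (1) of Proposition~\ref{P:torsion}, so the torsion pair is also of finite type.

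For the second step, Proposition~\ref{P:torsion-Thomason} describes the associated Thomason set $X$ explicitly as the set of primes $\mathfrak{p}$ containing some finitely generated ideal $J$ with $R/J \in \mathcal{T}$. The inclusion $V(I) \subseteq X$ is immediate by taking $J = I$, because the generator $R/I$ lies in $\mathcal{T}$. Conversely, suppose $\mathfrak{p} \in X$ and assume for contradiction that $I \not\subseteq \mathfrak{p}$. I first argue that $R/\mathfrak{p} \in \mathcal{F}$: any $R$-homomorphism $R/I \to R/\mathfrak{p}$ is determined by some $x + \mathfrak{p}$ with $Ix \subseteq \mathfrak{p}$, and primality of $\mathfrak{p}$ together with $I \not\subseteq \mathfrak{p}$ forces $x \in \mathfrak{p}$. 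But then, choosing any finitely generated $J \subseteq \mathfrak{p}$ witnessing $\mathfrak{p} \in X$, the canonical surjection $R/J \twoheadrightarrow R/\mathfrak{p}$ becomes a non-zero morphism from $\mathcal{T}$ to $\mathcal{F}$, contradicting the torsion pair axioms. Hence $\mathfrak{p} \in V(I)$, completing the identification $X = V(I)$.

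There is no substantive obstacle: the argument is routine bookkeeping that combines the classification results already established in this section with the elementary fact that, for a prime $\mathfrak{p}$, one has $\Hom_R(R/I, R/\mathfrak{p}) = 0$ precisely when $I \not\subseteq \mathfrak{p}$.
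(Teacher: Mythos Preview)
Your proof is correct. The approach differs somewhat from the paper's, though both are short. For the first step, the paper explicitly identifies the smallest Gabriel topology containing $I$ (via Lemma~\ref{L:Gabriel-finite}) as $\{J \mid I^m \subseteq J \text{ for some } m\}$, and then uses a filtration argument to show that the torsion class generated by $R/I$ coincides with the one attached to this Gabriel topology; hereditariness then follows from the Gabriel correspondence. You bypass this by invoking Propositions~\ref{P:fin-to-hered} and~\ref{P:torsion} directly, which is cleaner since those results already encapsulate exactly what is needed. For the second step, the paper simply defers to ``the proof of Proposition~\ref{P:torsion-Thomason}'' (the Thomason set being read off from the Gabriel topology just computed), whereas you give an explicit prime-ideal argument using the description of $X$ from that proof. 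Your version is more self-contained; the paper's is terser but requires the reader to trace through the chain of bijections. Either way the content is essentially the same elementary verification.
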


\begin{proof}
By Lemma~\ref{L:Gabriel-finite}, the smallest Gabriel topology $\mathcal{G}$ containing $I$ consists of the ideals $J$ such that $I^m \subseteq J$ for some $m \ge 1$. The corresponding cyclic module $R/J$ then admits a filtration by $R/I$-modules of length $m$. Hence the smallest torsion class $\mathcal{T}$ containing $R/I$ coincides with the smallest torsion class containing $\mathcal{G}$. In particular, $(\mathcal{T},\mathcal{F})$ is a hereditary torsion pair. The fact that it corresponds to the Thomason set $V(I)$ follows from the proof of Proposition~\ref{P:torsion-Thomason}.
\end{proof}

Now can we state and prove the promised result.

\begin{prop} \label{P:grade-vass}
Let $R$ be a commutative ring, $I$ be a finitely generated ideal, $M$ be an $R$-module and $n \ge 1$. Then the following are equivalent:
\begin{enumerate}
\item $\ext_R^i(R/I,M) = 0$ for each $i = 0,1,\dots,n-1$.
\item $\vass\big(\Omega^{-i}(M)\big) \cap V(I) = \emptyset$ for each $i = 0,1,\dots,n-1$.
\end{enumerate}
\end{prop}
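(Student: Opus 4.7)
My plan is to prove the equivalence by induction on $n$, with the base case $n=1$ supplying the geometric content and the inductive step being pure dimension shifting.

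For the base case $n=1$, the statement reduces to the claim that $\Hom_R(R/I,M)=0$ if and only if $\vass(M)\cap V(I)=\emptyset$. This falls out by combining Lemma~\ref{L:basic-hered-torsion}, which identifies $\Ker\Hom_R(R/I,-)$ as the torsion-free class of the hereditary torsion pair of finite type corresponding to the Thomason set $V(I)$, with Proposition~\ref{P:torsion-free-Thomason}, which describes that torsion-free class precisely as $\{F\mid\vass(F)\cap V(I)=\emptyset\}$.

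For the inductive step, fix the short exact sequence $0\to M\to E(M)\to\Omega^{-1}(M)\to 0$ where $E(M)$ is the injective envelope, so by minimality of the cosyzygies one has $\Omega^{-(i+1)}(M)\cong\Omega^{-i}(\Omega^{-1}(M))$ for all $i\ge 0$. The pivotal observation is that, because $M\subseteq E(M)$ is essential, any nonzero submodule of $E(M)$ annihilated by $I$ intersects $M$ nontrivially; hence $\Hom_R(R/I,M)=0$ if and only if $\Hom_R(R/I,E(M))=0$. Feeding this into the long exact sequence obtained by applying $\Hom_R(R/I,-)$ to the short exact sequence above, and using that $\ext_R^i(R/I,E(M))=0$ for $i\ge 1$, yields the dimension-shift identities
\[
\Hom_R(R/I,\Omega^{-1}(M))\cong\ext_R^1(R/I,M)\quad\text{when }\Hom_R(R/I,M)=0,
\]
and $\ext_R^i(R/I,\Omega^{-1}(M))\cong\ext_R^{i+1}(R/I,M)$ for all $i\ge 1$. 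In other words, vanishing of $\ext_R^i(R/I,M)$ for $i=0,1,\dots,n-1$ is equivalent to vanishing of $\ext_R^i(R/I,\Omega^{-1}(M))$ for $i=0,1,\dots,n-2$ together with $\Hom_R(R/I,M)=0$.

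With this shift available, both implications follow by straightforward induction. Condition (2) for $M$ and the range $i=0,\dots,n-1$ translates, via the base case applied to each $\Omega^{-i}(M)$, into $\Hom_R(R/I,\Omega^{-i}(M))=0$ for $i=0,\dots,n-1$, which by the identification $\Omega^{-i}(\Omega^{-1}(M))\cong\Omega^{-(i+1)}(M)$ is the same as $\Hom_R(R/I,M)=0$ together with condition (2) applied to $\Omega^{-1}(M)$ for the range $i=0,\dots,n-2$; the inductive hypothesis then converts the latter into $\ext_R^i(R/I,\Omega^{-1}(M))=0$ for $i=0,\dots,n-2$, which by the dimension shift is exactly condition (1). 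Reading the same chain of equivalences backwards gives the other direction. I expect the only real subtlety to be bookkeeping: one must verify the compatibility $\Omega^{-(i+1)}(M)\cong\Omega^{-i}(\Omega^{-1}(M))$ for minimal cosyzygies (which is immediate from uniqueness of the minimal injective resolution) and be careful at the transition point $i=0\to i=1$ where the dimension shift is driven by the essentiality argument rather than by injectivity of $E(M)$.
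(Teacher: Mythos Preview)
Your proof is correct and follows essentially the same approach as the paper: induction on $n$ with the base case supplied by Lemma~\ref{L:basic-hered-torsion} and Proposition~\ref{P:torsion-free-Thomason}, and the inductive step by dimension shifting through the injective envelope. The only cosmetic difference is that you shift ``from the bottom'' (reducing to $\Omega^{-1}(M)$ and invoking the inductive hypothesis there), whereas the paper shifts ``from the top'' (working with the sequence $0\to\Omega^{-(n-2)}(M)\to E\to\Omega^{-(n-1)}(M)\to 0$), and you justify $\Hom_R(R/I,M)=0\Rightarrow\Hom_R(R/I,E(M))=0$ directly via essentiality where the paper invokes closure of the hereditary torsion-free class $\mathcal{F}$ under injective envelopes---but these are the same fact.
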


\begin{proof}
We prove the proposition by induction on $n$. Let $\mathcal{F} = \Ker\Hom_R(R/I,-)$. The statement for $n=1$ is precisely Proposition~\ref{P:torsion-free-Thomason}. Suppose now that $n>1$, and consider the exact sequence
\begin{equation} \label{E:grade-shift}
0 \to \Omega^{-(n-2)}(M) \to E \to \Omega^{-(n-1)}(M) \to 0,
\end{equation}
where $E$ is the injective envelope of $\Omega^{-(n-2)}(M)$. An application of $\Hom_R(R/I,-)$ yields an exact sequence
$$ \Hom_R(R/I,E) \to \Hom_R(R/I,\Omega^{-(n-1)}(M)) \to \ext_R^{n-1}(R/I,M) \to 0. $$

If (1) holds, we have $\vass\big(\Omega^{-(n-2)}(M)\big) \cap V(I) = \emptyset$ by the inductive hypothesis, so $\Omega^{-(n-2)}(M) \in \mathcal{F}$ by Proposition~\ref{P:torsion-free-Thomason}. Since $\mathcal{F}$ is a hereditary torsion-free class, also $E\in\mathcal{F}$ and the leftmost term in~\eqref{E:grade-shift} vanishes. As also $\ext_R^{n-1}(R/I,M) = 0$ by the assumption, we have $\Hom_R(R/I,\Omega^{-(n-1)}(M)) = 0$ and $\vass\big(\Omega^{-(n-1)}(M)\big) \cap V(I) = \emptyset$ as required.

Suppose conversely that (2) holds. Then $\Hom_R(R/I,\Omega^{-(n-1)}(M)) = 0$ by Proposition~\ref{P:torsion-free-Thomason} and hence $\ext_R^{n-1}(R/I,M) = 0$. The other $\ext$-groups in (1) vanish by the inductive hypothesis.
\end{proof}

\subsection{Characterizations of grade}

Now we are in a position to state and prove the main result of the section. As the definition of the grade of a module is specific to the noetherian situation and will not be so important for the rest of the paper, we only refer to~\cite[\S\S I.1.1--I.1.2]{BH} for the corresponding standard definitions. The equivalences below are well-known under additional finiteness conditions ($R$ noetherian, $M$ finitely generated)---see for instance~\cite[Theorems I.2.5 and I.6.17]{BH}.

\begin{thm} \label{T:grade}
Let $R$ be a commutative ring, $M$ be an $R$-module and $n \ge 1$. Then the following are equivalent:
\begin{enumerate}
\item $H^i(I;M) = 0$ for each $i = 0,1,\dots,n-1$.
\item $\ext_R^i(R/I,M) = 0$ for each $i = 0,1,\dots,n-1$.
\item $\vass\big(\Omega^{-i}(M)\big) \cap V(I) = \emptyset$ for each $i = 0,1,\dots,n-1$.
\end{enumerate}
If, moreover, $R$ is noetherian and $M$ finitely generated, the statements are further equivalent to:
\begin{enumerate}
\item[(4)] The grade of $I$ on $M$ is at least $n$.
\end{enumerate}
\end{thm}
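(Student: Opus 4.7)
The plan is to observe that essentially everything needed has been established in the preceding subsections, so the proof amounts to assembling the pieces correctly. For the equivalence (1)$\iff$(2), I would simply cite Corollary~\ref{C:Koszul-vanish}, which was proved using Proposition~\ref{P:Koszul-ext} and identifies the two kernel intersections $\bigcap_{i=0}^{n-1}\Ker\ext_R^i(R/I,-)$ and $\bigcap_{i=0}^{n-1}\Ker H^i(I;-)$ as the same subcategory of $\ModR$. For the equivalence (2)$\iff$(3), I would directly invoke Proposition~\ref{P:grade-vass}, which was proved by induction on $n$ using the cosyzygy long exact sequence together with Proposition~\ref{P:torsion-free-Thomason} applied to the hereditary torsion-free class $\Ker\Hom_R(R/I,-)$.

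For the final part, under the extra assumption that $R$ is noetherian and $M$ is finitely generated, the equivalence of (4) with (1) (or equivalently (2)) is a classical computation: the grade of $I$ on $M$ is by definition the common length of all maximal $M$-regular sequences contained in $I$, and under these finiteness hypotheses it is well-known that this equals $\min\{i \mid \ext_R^i(R/I,M) \ne 0\}$, which is in turn equal to $\min\{i \mid H^i(I;M) \ne 0\}$. I would reference~\cite[Theorems I.2.5 and I.6.17]{BH} here, noting that these results deliver exactly the equivalence (4)$\iff$(2) in the noetherian finitely generated setting.

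The main obstacle of the theorem is not located in its own proof at all; all the real work was done earlier, in particular in establishing Proposition~\ref{P:Koszul-ext} (which requires the derived-categorical argument via Lemma~\ref{L:vanish-derived}) and in the inductive vaguely-associated-prime computation of Proposition~\ref{P:grade-vass}. At the level of this theorem, the only thing to be careful about is bookkeeping: making clear that (1)$\iff$(2) and (2)$\iff$(3) hold in full generality without any noetherian or finiteness assumption on $R$ or $M$, while the additional clause (4) is meaningful and equivalent only after imposing the extra hypotheses. I would therefore structure the write-up as a very short proof: first two sentences citing Corollary~\ref{C:Koszul-vanish} and Proposition~\ref{P:grade-vass} for the first three equivalences, then a concluding sentence handling (4) under the noetherian finitely generated hypothesis by citation to~\cite{BH}.
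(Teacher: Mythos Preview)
Your proposal is correct and matches the paper's own proof essentially line for line: the paper likewise cites Corollary~\ref{C:Koszul-vanish} for (1)$\iff$(2), Proposition~\ref{P:grade-vass} for (2)$\iff$(3), and \cite[Theorem I.2.5]{BH} for (2)$\iff$(4). Your additional commentary about where the real work lies is accurate and your bookkeeping remarks are apt.
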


\begin{proof}
The equivalence between (1) and (2) has been established in Corollary~\ref{C:Koszul-vanish} and the equivalence between (2) and (3) in Proposition~\ref{P:grade-vass}. For the equivalence between (2) and (4) see~\cite[Theorems I.2.5]{BH}.
\end{proof}

\section{Infinitely generated tilting theory}
\label{sec:tilt}
At this point, we quickly recall basic terminology and facts about module approximations and cotorsion pairs, two essential tools of the forthcoming sections. We also remind the reader of the notion of (not necessarily finitely generated) $n$-tilting and $n$-cotilting module, as defined by \cite{CT} and \cite{AC}, and the duality between those two.
\subsection{Module approximations} We briefly recall the definitions of (pre)covers and (pre)envelopes of modules. Let $\mathcal{C}$ be a class of right $R$-modules, and $M \in \ModR$. We say that a map $f: C \rightarrow M$ is a $\mathcal{C}$\emph{-precover} of $M$ provided that $C \in \mathcal{C}$, and for any $C' \in \mathcal{C}$ the map $\Hom_R(C',f)$ is surjective. Furthermore, if any map $g \in \End_R(C)$ such that $f=fg$ is necessarily an automorphism, we say that $f$ is a $\mathcal{C}$\emph{-cover}. Finally, a surjective map $f: C \rightarrow M$ is called a \emph{special $\mathcal{C}$-precover} if $C \in \mathcal{C}$ and $\Ker(f) \in {}^{\perp_1}\mathcal{C}$. It is easy to see that any special $\mathcal{C}$-precover is a $\mathcal{C}$-precover. Also, by the Wakamatsu Lemma (\cite[Lemma 5.13]{GT}), any surjective $\mathcal{C}$-cover is a special $\mathcal{C}$-precover. Finally, we say that a class $\mathcal{C}$ is \emph{(special) (pre)covering}, if any module $M \in \ModR$ admits a (special) $\mathcal{C}$-(pre)cover.

The notions of $\mathcal{C}$-preenvelope, $\mathcal{C}$-envelope, and special $\mathcal{C}$-preenvelope are defined dually.
\subsection{Cotorsion pairs}
Given a class $\mathcal{C} \subseteq \ModR$, we fix the following notation:
$$\mathcal{C}^{\perp_1} = \{M \in \ModR \mid \ext_R^1(C,M)=0 \text{ for all $C \in \mathcal{C}$}\},$$
$$\mathcal{C}^{\perp} = \{M \in \ModR \mid \ext_R^i(C,M)=0 \text{ for all $C \in \mathcal{C}$ and $i>0$}\},$$
$$\mathcal{C}^{\intercal} = \{M \in \RMod \mid \tor_i^R(C,M)=0 \text{ for all $C \in \mathcal{C}$ and $i>0$}\}.$$
We also define the ``left-hand'' version of those classes in an obviously analogous way. A pair of classes $(\mathcal{A},\mathcal{B})$ is called a \emph{cotorsion pair} if $\mathcal{B}=\mathcal{A}^{\perp_1}$, and $\mathcal{A}={}^{\perp_1}\mathcal{B}$. Such a cotorsion pair is said to be \emph{hereditary}, if furthermore $\mathcal{B}=\mathcal{A}^\perp$. 

A cotorsion pair $(\mathcal{A},\mathcal{B})$ is \emph{complete} provided that $\mathcal{A}$ is a special precovering class (equivalently, $\mathcal{B}$ is a special preenveloping class, see \cite[Lemma 5.20]{GT}). Complete cotorsion pairs are abundant - indeed, any cotorsion pair generated by a set is complete, and the left class of the pair can be described explicitly:

\begin{thm}
	\label{cotorsionpairs}
	\emph{(\cite[Theorem 6.11]{GT}, \cite[Corollary 6.14]{GT})} Let $\mathcal{S}$ be a set of modules. Then:
	\begin{enumerate}
		\item[(i)] The cotorsion pair $({}^{\perp_1}(S^{\perp_1}),S^{\perp_1})$ is complete,
		\item[(ii)] The class ${}^{\perp_1}(S^{\perp_1})$ consists precisely of all direct summands of all $\mathcal{S} \cup \{R\}$-filtered modules.
	\end{enumerate}
\end{thm}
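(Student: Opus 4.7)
The plan is to follow the classical Eklof--Trlifaj template: reduce both parts of the theorem to two ingredients, namely Eklof's Lemma (the class ${}^{\perp_1}\mathcal{C}$ is closed under transfinite extensions for any class $\mathcal{C}$ of modules) and a small-object-style transfinite construction. The completeness statement and the structural description will then follow from routine pushout and splitting manoeuvres.

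First I would establish Eklof's Lemma. Let $(M_\alpha \mid \alpha \le \lambda)$ be a continuous chain with $M_0 = 0$ and $M_{\alpha+1}/M_\alpha \in {}^{\perp_1}\mathcal{C}$ for every $\alpha$. Given $C \in \mathcal{C}$ and an extension $0 \to C \to E \to M_\lambda \to 0$, one constructs a coherent system of splittings $s_\alpha \colon M_\alpha \to E$ by transfinite recursion on $\alpha$: the successor step invokes the hypothesis on the $\alpha$-th factor, while at a limit ordinal $\beta$ one has to amalgamate the previously chosen $(s_\alpha)_{\alpha<\beta}$ into a splitting defined on $M_\beta = \bigcup_{\alpha<\beta} M_\alpha$. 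This limit step is the delicate point and, to my mind, the main obstacle of the whole argument.

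Next comes the transfinite construction. Fix a regular cardinal $\kappa$ larger than $|R|$ and $|S|$ for every $S \in \mathcal{S}$. Starting from a module $M$, define a continuous chain $(M_\alpha)_{\alpha \le \kappa}$ by $M_0 = M$, $M_\beta = \bigcup_{\alpha<\beta} M_\alpha$ at limits, and at successor steps the short exact sequence
\[
0 \to M_\alpha \to M_{\alpha+1} \to \bigoplus_{S \in \mathcal{S}} S^{(\ext^1_R(S,M_\alpha))} \to 0
\]
whose classifying map is chosen so that every class in every $\ext^1_R(S,M_\alpha)$ becomes trivial when pushed to $M_{\alpha+1}$. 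The smallness of each $S$ relative to $\kappa$ ensures that any class in $\ext^1_R(S,M_\kappa)$ comes from $\ext^1_R(S,M_\alpha)$ for some $\alpha < \kappa$, and hence already vanishes. Therefore $M_\kappa \in \mathcal{S}^{\perp_1}$, while $M_\kappa/M$ is $\mathcal{S}$-filtered by construction and so lies in ${}^{\perp_1}(\mathcal{S}^{\perp_1})$ by Eklof's Lemma.

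To assemble (i) and (ii), pick any $N \in \ModR$, take a surjection $R^{(I)} \twoheadrightarrow N$ with kernel $K$, apply the construction to $K$ to obtain $0 \to K \to B \to L \to 0$ with $B \in \mathcal{S}^{\perp_1}$ and $L$ being $\mathcal{S}$-filtered, and push out along $K \hookrightarrow B$ to produce $0 \to B \to P \to N \to 0$. The middle term $P$ admits a two-step filtration with factors $R^{(I)}$ and $L$, making it $(\mathcal{S} \cup \{R\})$-filtered and hence in ${}^{\perp_1}(\mathcal{S}^{\perp_1})$ by Eklof. This is a special ${}^{\perp_1}(\mathcal{S}^{\perp_1})$-precover of $N$, which proves completeness in~(i); moreover, if $N$ itself lies in ${}^{\perp_1}(\mathcal{S}^{\perp_1})$, then the sequence splits because $B \in \mathcal{S}^{\perp_1}$, exhibiting $N$ as a direct summand of the $(\mathcal{S} \cup \{R\})$-filtered module $P$, which is the nontrivial inclusion of~(ii). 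The opposite inclusion of~(ii) is immediate: any $(\mathcal{S} \cup \{R\})$-filtered module lies in ${}^{\perp_1}(\mathcal{S}^{\perp_1})$ by Eklof (since $R \in {}^{\perp_1}(\mathcal{S}^{\perp_1})$ trivially), and this class is closed under direct summands.
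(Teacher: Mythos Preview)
Your proposal is a correct outline of the standard Eklof--Trlifaj argument. The paper itself does not prove this theorem at all: it simply quotes the statement from \cite[Theorem~6.11 and Corollary~6.14]{GT}, so there is no ``paper's own proof'' to compare against. What you have written is precisely (a sketch of) the proof given in that reference, so in this sense your approach coincides with the one the paper defers to.

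One small remark on emphasis: you identify the limit step in Eklof's Lemma as ``the main obstacle'', but in fact once the partial sections $s_\alpha$ are constructed \emph{coherently} (each extending the previous one), the limit step is just taking the union. The real work is at successor ordinals, where one uses $\ext^1_R(M_{\alpha+1}/M_\alpha,C)=0$ to extend $s_\alpha$ to an $s_{\alpha+1}$ that restricts to $s_\alpha$ on $M_\alpha$. This does not affect the correctness of your argument, only the framing.
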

\subsection{Tilting and cotilting modules and classes}
Let $T$ be a right $R$-module and $n \geq 0$. We say that $T$ is \emph{$n$-tilting} if the following three conditions hold:
\begin{enumerate}
	\item[(T1)] $\pd T \leq n$,
	\item[(T2)] $\ext_R^i(T,T^{(X)})=0$ for all $i>0$ and all sets $X$,
	\item[(T3)] there is an exact sequence $0 \rightarrow R \rightarrow T_0 \rightarrow \cdots \rightarrow T_n \rightarrow 0$, where $T_i$ is a direct summand of a direct sum of copies of $T$ for all $i=0,1,\ldots,n$.
\end{enumerate}
A module $T$ is \emph{tilting} if it is $n$-tilting for some $n \geq 0$. An $n$-tilting module $T$ induces a hereditary and complete cotorsion pair $(\mathcal{A},\mathcal{T})=({}^{\perp_1}(T^\perp), T^\perp)$. The class $\mathcal{T}$ is called an \emph{($n$-)tilting class}. Two tilting modules $T,T'$ are \emph{equivalent} if they induce the same tilting class. Even though the tilting modules in our setting are in general big (indeed, over a commutative ring, any finitely generated tilting module is projective), the tilting classes arise from small modules in the following sense. Let $\modR$ denote the full subcategory of $\ModR$ consisting of \emph{strongly finitely presented modules}, that is, modules having finite projective resolution consisting of finitely generated projectives. A full subcategory $\mathcal{S}$ of $\modR$ is called \emph{resolving} if it contains all finitely generated projectives, is closed under extensions and direct summands, and $A \in \mathcal{S}$ whenever there is an exact sequence
$$0 \rightarrow A \rightarrow B \rightarrow C \rightarrow 0,$$
with $B,C \in \mathcal{S}$.
\begin{thm}
		\label{T:finitetype}
		\emph{(\cite{BS}, \cite[Theorem 13.49]{GT})}
		There is a bijective correspondence between $n$-tilting classes $\mathcal{T}$ and resolving subcategories $\mathcal{S}$ of $\modR$ consisting of $R$-modules of projective dimension $\leq n$. The correspondence is given by mutually inverse assignments $\mathcal{T} \mapsto ({}^\perp \mathcal{T}) \cap \modR$, and $\mathcal{S} \mapsto \mathcal{S}^\perp = \mathcal{S}^{\perp_1}$.
\end{thm}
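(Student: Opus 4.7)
My strategy relies crucially on the Bazzoni--Šťovíček finite-type theorem, which I would invoke as a black box: every $n$-tilting class $\mathcal{T}$ has the form $\mathcal{S}_0^\perp$ for some set $\mathcal{S}_0 \subseteq \modR$ of modules of projective dimension at most $n$. Without this input, pinning down a small set of strongly finitely presented ``test modules'' for a given tilting class is delicate; with it, the rest of the argument becomes a combination of closure verifications and a standard approximation construction.

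Given the finite-type result, I would first verify that $\mathcal{T} \mapsto \mathcal{S} := ({}^\perp\mathcal{T}) \cap \modR$ lands in resolving subcategories of $\modR$ of projective dimension $\le n$, and that $\mathcal{S}^\perp = \mathcal{T}$. The equality follows from $\mathcal{S}_0 \subseteq \mathcal{S} \subseteq {}^\perp\mathcal{T}$, which yields $\mathcal{T} \subseteq \mathcal{S}^\perp \subseteq \mathcal{S}_0^\perp = \mathcal{T}$. Closure of $\mathcal{S}$ under extensions, summands, and kernels of epimorphisms (with both outer terms in $\mathcal{S}$) comes from the hereditary character of the cotorsion pair $({}^\perp\mathcal{T}, \mathcal{T})$ and standard long-exact-sequence manipulations; for kernels one also uses that the kernel of a surjection between strongly finitely presented modules remains in $\modR$. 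The bound $\pd \mathcal{S} \le n$ follows from Theorem~\ref{cotorsionpairs}: every module in ${}^\perp\mathcal{T}$ is a direct summand of an $(\mathcal{S}_0 \cup \{R\})$-filtered module, and an Eklof-style argument preserves the bound $\pd \le n$ under transfinite filtrations.

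For the reverse assignment, given a resolving $\mathcal{S} \subseteq \modR$ of $\pd \le n$, I would construct an $n$-tilting module $T$ with $T^\perp = \mathcal{S}^\perp$. Set $\mathcal{B} = \mathcal{S}^\perp$; the resolving hypothesis implies $\mathcal{B} = \mathcal{S}^{\perp_1}$ (syzygies of modules in $\mathcal{S}$ remain there, so higher Ext collapses to first Ext), hence the cotorsion pair $(\mathcal{A},\mathcal{B}) = ({}^{\perp_1}\mathcal{B}, \mathcal{B})$ is complete and hereditary by Theorem~\ref{cotorsionpairs}. I would then take iterated special $\mathcal{B}$-preenvelopes starting from $R$, producing short exact sequences $0 \to R \to T_0 \to A_1 \to 0$, $0 \to A_1 \to T_1 \to A_2 \to 0$, and so on up to $0 \to A_{n-1} \to T_{n-1} \to A_n \to 0$, with each $T_i \in \mathcal{B}$ and each $A_{i+1} \in \mathcal{A}$. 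Because $\pd \mathcal{A} \le n$, dimension shifting along this sequence forces $A_n \in \mathcal{A} \cap \mathcal{B}$. Setting $T = T_0 \oplus \cdots \oplus T_{n-1} \oplus A_n$ then gives (T3) by splicing, (T1) since each summand is built from extensions of modules of $\pd \le n$, and (T2) from $T$ being component-wise in $\mathcal{A} \cap \mathcal{B}$. The induced tilting class $T^\perp$ agrees with $\mathcal{B}$ by comparing generators of the two cotorsion pairs.

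The main obstacle I anticipate is verifying $({}^\perp(\mathcal{S}^\perp)) \cap \modR = \mathcal{S}$ on the nose. One inclusion is tautological, but the reverse requires a Hill-lemma-style argument: a strongly finitely presented module in ${}^\perp(\mathcal{S}^\perp)$ is a direct summand of some $(\mathcal{S} \cup \{R\})$-filtered module, and one must exploit the finite presentation to descend from a transfinite filtration to a finite construction using only the resolving operations available inside $\mathcal{S}$. This descent, together with the Bazzoni--Šťovíček theorem itself, is where the real technical content of the statement lies.
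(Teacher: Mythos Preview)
The paper does not supply its own proof of this theorem: it is quoted verbatim from the literature (Bazzoni--\v{S}\v{t}ov\'{\i}\v{c}ek and \cite[Theorem 13.49]{GT}), so there is no in-paper argument to compare against. Your outline is in fact the standard route taken in those references: invoke the finite-type theorem to obtain a generating set $\mathcal{S}_0 \subseteq \modR$, verify the resolving closure properties of $({}^\perp\mathcal{T})\cap\modR$ via the hereditary cotorsion pair, build the tilting module by iterated special $\mathcal{B}$-preenvelopes of $R$, and close the loop with a Hill-lemma descent to recover $\mathcal{S}$ from ${}^\perp(\mathcal{S}^\perp)$. You have correctly identified both the load-bearing black box (finite type) and the genuine technical residue (the Hill-lemma step showing that a strongly finitely presented summand of an $\mathcal{S}\cup\{R\}$-filtered module is already obtainable by finitely many resolving operations inside $\mathcal{S}$); the rest is routine, and your sketch matches what the cited sources do.
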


The cotilting modules have a formally dual definition - a left $R$-module $C$ is \emph{($n$-)cotilting} if the following conditions hold:
\begin{enumerate}
	\item[(C1)] $\id T \leq n$,
	\item[(C2)] $\ext_R^i(C^X,C)=0$ for all $i>0$ and all sets $X$,
	\item[(C3)] there is an exact sequence $0 \rightarrow C_n \rightarrow  \cdots \rightarrow C_0 \rightarrow W \rightarrow 0$, where $C_i$ is a direct summand of a direct product of copies of $T$ for all $i=0,1,\ldots,n$, and $W$ is an injective cogenerator of $\RMod$.
\end{enumerate}
As for the tilting modules, a cotilting module $C$ induces a \emph{cotilting class} $\mathcal{C}={}^\perp C$, and two cotilting modules $C,C'$ are \emph{equivalent} if they induce the same cotilting class. There is also an explicit duality between tilting and cotilting modules. If $R$ is a $k$-algebra over a commutative ring $k$ (e.g. $k=\mathbb{Z}$), we denote by $(-)^+=\Hom_k(-,E)$ the duality with respect to an injective cogenerator $E$ of $k\operatorname{-Mod}$. Then for any right tilting $R$-module $T$, the dual $T^+$ is a left cotilting $R$-module. We say that a tilting class $\mathcal{T}$ in $\ModR$ and cotilting class $\mathcal{C}$ in $\RMod$ are \emph{associated} if there is a tilting module $T$ inducing $\mathcal{T}$ such that $T^+$ induces $\mathcal{C}$.

It is not true that every cotilting class is associated to a tilting class. We say that a class $\mathcal{C}$ is of \emph{cofinite type} provided that there is a set of strongly finitely presented modules $\mathcal{S}$ of projective dimension bounded by $n$ such that $\mathcal{C} = \mathcal{S}^\intercal$.

\begin{thm}
		\label{T:cofinitetype}
		\emph{(\cite[Proposition 15.17]{GT}, \cite[Theorem 15.18]{GT})} Any class of cofinite type is cotilting. A cotilting class is associated to some tilting class if and only it is of cofinite type. Furthermore, the assignment $T^\perp \mapsto {}^\perp (T^+)$ induces a bijection between tilting class in $\ModR$ and cotilting classes in $\RMod$ of cofinite type.
\end{thm}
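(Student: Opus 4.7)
The theorem combines three assertions: (i) every class of cofinite type is cotilting, (ii) a cotilting class is associated to some tilting class if and only if it is of cofinite type, and (iii) the assignment $T^\perp \mapsto {}^\perp(T^+)$ is a bijection between tilting classes in $\ModR$ and cofinite-type cotilting classes in $\RMod$. My plan is to prove (i) by an explicit construction of the tilting module (and its cotilting dual), after which (ii) and (iii) follow by essentially formal manipulations. The unifying tool throughout is the \emph{Ext-Tor duality}
\[
\ext_R^i(M,N^+) \;\cong\; \tor_i^R(M,N)^+
\]
for $M \in \ModR$, $N \in \RMod$, $i \geq 0$, obtained by applying the $\Hom$-$\otimes$ adjunction to a projective resolution of $M$ and using exactness of $(-)^+$; this identity already appears in Remark~\ref{R:Koszul-tor}.

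For (i), let $\mathcal{S}$ be a set of strongly finitely presented modules of projective dimension at most $n$ and set $\mathcal{C} = \mathcal{S}^\intercal$. I would first apply Theorem~\ref{cotorsionpairs} to $\mathcal{S} \cup \{R\}$ to produce a complete cotorsion pair $(\mathcal{A},\mathcal{B})$ with $\mathcal{B} = \mathcal{S}^{\perp_1}$; since each $S \in \mathcal{S}$ has a finite projective resolution by finitely generated projectives, the pair is hereditary and $\mathcal{B} = \mathcal{S}^\perp$. By Theorem~\ref{cotorsionpairs}(ii) combined with Auslander's lemma on preservation of projective dimension under transfinite filtrations, every module in $\mathcal{A}$ has projective dimension $\leq n$. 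A standard construction (cf.\ \cite[Theorem 13.46]{GT}) then produces an $n$-tilting module $T$ with $T^\perp = \mathcal{B}$, realised as a direct summand of some $\mathcal{S} \cup \{R\}$-filtered module. The claim is that $T^+$ is cotilting with ${}^\perp(T^+) = \mathcal{C}$: the axioms (T1)--(T3) for $T$ translate, via the Ext-Tor duality and the identification $(T^{(X)})^+ \cong (T^+)^X$, into the axioms (C1)--(C3) for $T^+$, while the duality also yields ${}^\perp(T^+) = T^\intercal$.

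The crucial and most delicate step is the equality $T^\intercal = \mathcal{S}^\intercal$. The inclusion $\mathcal{S}^\intercal \subseteq T^\intercal$ follows because $T$ is a direct summand of an $\mathcal{S}\cup\{R\}$-filtered module and $\tor$-vanishing in the first argument is preserved by continuous filtrations (an Eklof-style transfinite induction on the connecting homomorphism in $\tor$) and by direct summands. The reverse inclusion is obtained by appealing to Theorem~\ref{T:finitetype}: the construction of $T$ yields $T^\perp = \mathcal{S}^\perp$, so the resolving subcategory of $\modR$ associated to $T^\perp$ agrees with the resolving closure of $\mathcal{S}$ (and hence has the same $\intercal$-class), from which one deduces that $\tor$-vanishing against $T$ is no weaker than against $\mathcal{S}$.

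With (i) in hand, for (ii) the ``if'' direction is (i), while the ``only if'' direction goes as follows: if $\mathcal{C} = {}^\perp(T^+)$ is associated to a tilting class $\mathcal{T} = T^\perp$, invoke Theorem~\ref{T:finitetype} to obtain the resolving subcategory $\mathcal{S} = ({}^\perp\mathcal{T}) \cap \modR$ of strongly finitely presented modules of bounded projective dimension, and repeat the Ext-Tor argument to conclude $\mathcal{C} = T^\intercal = \mathcal{S}^\intercal$, so $\mathcal{C}$ is of cofinite type. For (iii), well-definedness is immediate since equivalent tilting modules have the same associated $\mathcal{S}$ and hence the same $\mathcal{S}^\intercal$; bijectivity follows by combining the bijection $\mathcal{T} \leftrightarrow \mathcal{S}$ from Theorem~\ref{T:finitetype} with the fact that $\mathcal{S} \mapsto \mathcal{S}^\intercal$ descends to a bijection onto cofinite-type cotilting classes, which is itself a consequence of the identification $T^\intercal = \mathcal{S}^\intercal$ established in (i). The main obstacle remains this last identification, everything else being a formal application of the Ext-Tor duality and Theorem~\ref{T:finitetype}.
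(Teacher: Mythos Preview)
The paper does not prove this theorem; it simply records it as a known result, citing \cite[Proposition 15.17]{GT} and \cite[Theorem 15.18]{GT}. Your sketch is essentially a reconstruction of the standard argument found there, and the overall strategy---build a tilting module $T$ from $\mathcal{S}$ via the cotorsion-pair machinery, dualize to obtain a cotilting module $T^+$, and identify ${}^\perp(T^+)$ with $\mathcal{S}^\intercal$ through the Ext--Tor duality---is correct.

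There is one place where your argument is both more complicated than necessary and does not quite close: the inclusion $T^\intercal \subseteq \mathcal{S}^\intercal$. You invoke Theorem~\ref{T:finitetype} and the resolving closure of $\mathcal{S}$, but it is unclear how knowing that this resolving closure has the same $\intercal$-class as $\mathcal{S}$ lets you pass from $N \in T^\intercal$ to $N \in \mathcal{S}^\intercal$, since $T$ itself is generally not in $\modR$. The clean fix is to use the very duality you set up at the outset: from $T^\perp = \mathcal{S}^\perp$ one obtains directly
\[
N \in T^\intercal \;\Longleftrightarrow\; N^+ \in T^\perp = \mathcal{S}^\perp \;\Longleftrightarrow\; N \in \mathcal{S}^\intercal,
\]
which gives both inclusions at once and renders the filtration argument for $\mathcal{S}^\intercal \subseteq T^\intercal$ unnecessary as well. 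A similar remark applies to injectivity in (iii): to recover $\mathcal{S}^\perp$ (and hence $\mathcal{S}$, via Theorem~\ref{T:finitetype}) from $\mathcal{S}^\intercal$ you need definability of the tilting class, namely $M \in \mathcal{S}^\perp \Leftrightarrow M^{++} \in \mathcal{S}^\perp \Leftrightarrow M^+ \in \mathcal{S}^\intercal$; this is exactly the content of Lemma~\ref{L78}, and you should make the step explicit rather than fold it into ``a consequence of the identification $T^\intercal = \mathcal{S}^\intercal$''.
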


An example of a cotilting class not of cofinite type was first exhibited in \cite{B2}. In \S\ref{sec:notcofinite}, we show a more general construction of such classes.

All tilting and all cotilting classes are definable (i.e., closed under pure submodules, direct products, and direct limits). Furthermore, a pair of associated tilting class and cotilting class is dual definable in the following sense:
\begin{lem}
		\label{L78}
		Let $R$ be a ring, $\mathcal{T}$ a tilting class in $\ModR$, and $\mathcal{C}$ the cotilting class of cofinite type in $\RMod$ associated to $\mathcal{T}$. Then for any $M \in \ModR$, and $N \in \RMod$:
	\begin{enumerate}
			\item[(i)] $M \in \mathcal{T}$ if and only if $M^+ \in \mathcal{C}$,
			\item[(ii)] $N \in \mathcal{C}$ if and only if $N^+ \in \mathcal{T}$.
	\end{enumerate}
\end{lem}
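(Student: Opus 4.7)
My plan is to reduce both equivalences to the standard Ext--Tor duality under the character functor $(-)^+$, applied to a common set of strongly finitely presented modules that controls both $\mathcal{T}$ and $\mathcal{C}$.

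I would first fix a set $\mathcal{S} \subseteq \modR$ of strongly finitely presented right $R$-modules of bounded projective dimension such that
$$\mathcal{T} = \mathcal{S}^\perp \quad \text{and} \quad \mathcal{C} = \mathcal{S}^\intercal.$$
Theorem~\ref{T:finitetype} produces $\mathcal{S} = ({}^\perp \mathcal{T}) \cap \modR$ with $\mathcal{T} = \mathcal{S}^\perp$. By Theorem~\ref{T:cofinitetype}, the cofinite-type cotilting class $\mathcal{C}$ associated to $\mathcal{T}$ is of the form $\mathcal{S}'^\intercal$ for some set $\mathcal{S}'$ of strongly finitely presented modules. A short verification using that, for fixed $N$, the class $\{X \in \modR \mid \tor_i^R(X,N) = 0 \text{ for all } i > 0\}$ is closed under extensions, summands, and kernels of epimorphisms (by the long exact sequence of Tor) then identifies $\mathcal{S}^\intercal$ with $\mathcal{S}'^\intercal = \mathcal{C}$, since the resolving closure of $\mathcal{S}'$ determines the same Tor-perp class.

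Next, for each $S \in \mathcal{S}$, I would fix a projective resolution $P_\bullet \to S$ with every $P_k$ a finitely generated projective right $R$-module. For any right $R$-module $M$ and any left $R$-module $N$, the natural maps
$$P_k \otimes_R M^+ \longrightarrow \Hom_R(P_k, M)^+ \quad \text{and} \quad \Hom_R(P_k, N^+) \longrightarrow (P_k \otimes_R N)^+$$
are isomorphisms---the first because $P_k$ is finitely generated projective (check on $P_k = R$, then extend by additivity), and the second by the unrestricted tensor--hom adjunction. Applying these termwise to $P_\bullet$ and using exactness of $(-)^+$ (which exchanges chain and cochain indexing), a routine passage to (co)homology yields natural isomorphisms
$$\tor_i^R(S, M^+) \cong \ext_R^i(S, M)^+ \quad \text{and} \quad \ext_R^i(S, N^+) \cong \tor_i^R(S, N)^+$$
for each $i \ge 0$.

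Since $E$ is a cogenerator, $(-)^+$ is also faithful, so an abelian group vanishes if and only if its image under $(-)^+$ vanishes. Combining everything, $M \in \mathcal{T}$ iff $\ext_R^i(S,M) = 0$ for all $S \in \mathcal{S}$ and $i > 0$, iff $\tor_i^R(S, M^+) = 0$ for all such $S,i$, iff $M^+ \in \mathcal{C}$, giving (i); the symmetric chain with the roles of Ext and Tor swapped yields (ii). The main potential obstacle is the first step---verifying that the same set $\mathcal{S}$ governs both $\mathcal{T}$ and $\mathcal{C}$; once this is pinned down, the remainder is essentially a direct application of the Ext--Tor duality over finitely generated projective resolutions.
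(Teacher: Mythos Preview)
Your argument is correct and, for part~(ii), essentially identical to the paper's: both use the adjunction isomorphism $\ext_R^i(S,N^+)\cong\tor_i^R(S,N)^+$ together with faithfulness of $(-)^+$. The first step, identifying $\mathcal{C}$ with $\mathcal{S}^\intercal$ for the \emph{same} $\mathcal{S}$, is handled in the paper by a direct citation of \cite[Theorem~15.19]{GT}; your resolving-closure argument is a reasonable sketch of why this holds, though as you note it is the one place requiring care.

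Where you genuinely diverge from the paper is part~(i). You argue symmetrically, invoking the companion isomorphism $\tor_i^R(S,M^+)\cong\ext_R^i(S,M)^+$, which is available precisely because each $S\in\mathcal{S}$ admits a resolution by finitely generated projectives. The paper instead deduces~(i) from~(ii) via definability of $\mathcal{T}$: since $\mathcal{T}$ is definable, $M\in\mathcal{T}$ if and only if $M^{++}\in\mathcal{T}$ (as $M$ is a pure submodule of $M^{++}$ and $M^{++}$ is a pure quotient of a product of copies of $M$), and then one applies~(ii) to $N=M^+$. Your route is more elementary and self-contained, avoiding the appeal to definability and the pure-embedding $M\hookrightarrow M^{++}$; the paper's route, on the other hand, only needs the one adjunction isomorphism and highlights the role of definability, which is a recurring theme elsewhere in the paper.
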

\begin{proof}
		The proof is the same as that of \cite[Lemma 3.3]{AH}. As $\mathcal{T}$ is of finite type, there is a set $\mathcal{S} \subseteq \modR$ such that $\mathcal{T} = \mathcal{S}^\perp$. By \cite[Theorem 15.19, and the paragraph following it]{GT}, also $\mathcal{C}=\mathcal{S}^\intercal$. Using the Hom-$\otimes$ adjunction, exactness of $(-)^+$, and \cite[Theorem 3.2.10]{EJ}, we have $\tor_i^R(S,N)^+ \simeq \ext_R^i(S,N^+)$, which yields $(ii)$. Since $\mathcal{T}$ is definable, $M \in \mathcal{T}$ if and only if $M^{++} \in \mathcal{T}$ by \cite[3.4.21]{MP}. Then we can apply $(ii)$ tho obtain $(i)$.
\end{proof}
	Finally, as in \cite{APST}, we fix the following notation for cotilting classes of lower dimensions induced by a cotilting class, which will be useful for arguing by induction on the dimension.
\begin{notation}
		Given an $n$-cotilting class $\mathcal{C}$ induced by a cotilting module $C$, we let $\mathcal{C}_{(i)}={}^\perp \Omega^{-i}C$ for all $i\geq 0$. In particular, $\mathcal{C}_{(0)}=\mathcal{C}$, and $\mathcal{C}_{(i)}$ is a $(n-i)$-cotilting class for all $i=0,1,\ldots,n$ (see \cite[Lemma 3.5]{APST}).
\end{notation}

\section{Cotilting classes of cofinite type}
\label{sec:cotilt}
In this section we classify cotilting classes of cofinite type over a commutative ring. The parametrizing set for this classification will consist of sequences of torsion-free classes of hereditary torsion-free pairs of finite type satisfying some extra conditions. Using results from Section~\ref{sec:torsion-comm}, these classes are in bijective correspondence with certain Thomason sets, and thus generalize in a direct way the parametrizing sets used in the noetherian case in \cite{APST}.
\begin{definition}
	Let $R$ be a commutative ring and $n \geq 0$. We say that
			\item a sequence of torsion-free classes $\mathfrak{S}=(\mathcal{F}_0, \mathcal{F}_1, \ldots, \mathcal{F}_{n-1})$ is a \emph{characteristic sequence (of length n)} if 
					\begin{enumerate}
						\item[(i)] $\mathcal{F}_i$ is hereditary and of finite type for each $i=0,1,\ldots,n-1$,
						\item[(ii)] $\mathcal{F}_0 \subseteq \mathcal{F}_1 \subseteq \cdots \subseteq \mathcal{F}_{n-1}$,
						\item[(iii)] $\Omega^{-i} R \in \mathcal{F}_i$ for each $i=0,1,\ldots,n-1$.
					\end{enumerate}
\end{definition}
\begin{notation} \label{C(S)}
Given a characteristic sequence $\mathfrak{S}=(\mathcal{F}_0,\mathcal{F}_1,\ldots,\mathcal{F}_{n-1})$ and $i=0,1,\ldots,n-1$ we put
			$$\mathcal{F}_i(\mathfrak{S})=\mathcal{F}_i,$$
			and we define a class
			$$\mathcal{C}(\mathfrak{S})=\{M \in \ModR \mid \Omega^{-i}M \in \mathcal{F}_i(\mathfrak{S}) \text{ for each $i=0,1,\ldots,n-1$}\}.$$
\end{notation}
Our goal in this section is to prove the following:
\begin{thm}
	\label{T01}
	Let $R$ be a commutative ring and $n \geq 0$. Then the assignment 
		$$\Psi: \mathfrak{S} \mapsto \mathcal{C}(\mathfrak{S})$$
		is a bijection between the set of all characteristic sequences of length $n$ and all $n$-cotilting classes in $\ModR$ of cofinite type.
\end{thm}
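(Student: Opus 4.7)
My approach splits into a well-definedness direction and a surjectivity direction, bridged by a cohomological reformulation of $\mathcal{C}(\mathfrak{S})$.

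I first translate the data of a characteristic sequence into a cohomological condition. Let $X_0 \supseteq X_1 \supseteq \cdots \supseteq X_{n-1}$ be the Thomason subsets of $\spec R$ corresponding to $\mathcal{F}_0, \ldots, \mathcal{F}_{n-1}$ under Proposition~\ref{P:torsion-Thomason} (inclusions reverse because a larger torsion-free class corresponds to a smaller torsion class, hence a smaller Thomason support). Using Proposition~\ref{P:torsion-free-Thomason} together with Theorem~\ref{T:grade}, and crucially exploiting the chain condition in the definition of a characteristic sequence (so that $V(I) \subseteq X_i$ forces $V(I) \subseteq X_j$ for all $j \le i$), I would show that $M \in \mathcal{C}(\mathfrak{S})$ if and only if $\ext_R^j(R/I, M) = 0$ for every finitely generated ideal $I$ and every $j \le i$ such that $V(I) \subseteq X_i$. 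By Corollary~\ref{C:Koszul-vanish}, this vanishing is equivalent to that of the Koszul cohomology $H^j(I; M)$ in the same range.

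For well-definedness of $\Psi$, I would pass to the dual side via the elementary duality (Lemma~\ref{L78} and Remark~\ref{R:Koszul-tor}): on left modules the condition becomes vanishing of Koszul homology, hence of $\tor_j^R(R/I, -)$ in the relevant degrees, which is equivalent to Tor-vanishing against a set $\mathcal{S}$ of strongly finitely presented modules of projective dimension at most $n$ (built from syzygies of truncated Koszul complexes of the finitely generated ideals appearing in the $X_i$'s). The condition $\Omega^{-i}R \in \mathcal{F}_i$ precisely says $R \in \mathcal{C}(\mathfrak{S})$, which together with Theorem~\ref{T:cofinitetype} yields a cofinite-type $n$-cotilting class.

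For surjectivity, given a cofinite-type $n$-cotilting class $\mathcal{C}$, I would invoke Lemma~\ref{L02} (to be established later in this section), which produces a sequence of hereditary torsion pairs of finite type describing $\mathcal{C}$ cosyzygy-wise. Taking their torsion-free classes yields a candidate $\mathfrak{S} = (\mathcal{F}_0, \ldots, \mathcal{F}_{n-1})$. Condition (i) is part of the lemma, and (iii) is immediate from $R \in \mathcal{C}$. The inclusion (ii) follows from $X_{i+1} \subseteq X_i$, which in turn is immediate from the Ext-vanishing characterization above since $V(I) \subseteq X_{i+1}$ forces vanishing in one additional degree, hence in all lower degrees too. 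Injectivity of $\Psi$ then follows because each Thomason set $X_i$---and hence each $\mathcal{F}_i$---is intrinsically recovered from $\mathcal{C}(\mathfrak{S})$ as the union of $V(I)$ over finitely generated ideals $I$ satisfying $\ext_R^j(R/I, M) = 0$ for all $M \in \mathcal{C}(\mathfrak{S})$ and all $j \le i$.

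The main obstacle is clearly Lemma~\ref{L02}, which carries almost all the technical content of the surjectivity direction. Its proof must exploit the cofinite-type hypothesis (in particular that $\mathcal{C}$ is closed under direct limits and pure submodules) to show that, for each $i$, the class of modules occurring as $i$-th cosyzygies of modules in $\mathcal{C}$, suitably closed, forms the torsion-free part of a hereditary torsion pair of finite type. Establishing the finite-type property reduces, via Proposition~\ref{P:torsion}, to closure under direct limits, which should follow from definability of $\mathcal{C}$; verifying heredity and producing an explicit generating set of finitely presented torsion modules of the right cohomological dimension is the most delicate point and will likely require peeling off cosyzygy layers inductively while controlling projective dimension via the Koszul-syzygy description encoded in Theorem~\ref{T:finitetype}.
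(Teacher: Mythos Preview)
Your overall architecture---recast $\mathcal{C}(\mathfrak{S})$ as an Ext/Koszul-vanishing condition, exhibit cofinite type by producing the strongly finitely presented modules $S_{I,i+1}$ from truncated Koszul complexes, and invoke Lemma~\ref{L02} for surjectivity---matches the paper's proof closely. Your well-definedness argument is essentially Lemma~\ref{L27}, Proposition~\ref{P14}, and Lemma~\ref{L28} combined.

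There is, however, a genuine gap in your sketch of how Lemma~\ref{L02} should go. You say the cofinite-type hypothesis enters through closure under direct limits and pure submodules, but \emph{every} cotilting class is definable, so this cannot be where cofinite type is actually used. The paper's mechanism is Proposition~\ref{P01}: a cofinite-type cotilting class is closed under \emph{injective envelopes}. This is what lets Lemma~\ref{L01} peel off the bottom layer as a hereditary torsion-free class (the closure of $\mathcal{C}$ under submodules, which then inherits injective-envelope closure), and the induction continues because $\mathcal{C}_{(1)}$ is again of cofinite type. Heredity of each $\mathcal{F}_i$ then comes for free; finite type uses definability plus the functorial coevaluation of Lemma~\ref{functorialmap}. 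Section~\ref{sec:notcofinite} shows the distinction is sharp: cotilting classes not of cofinite type exist precisely because some $\mathcal{C}_{(i)}$ fails injective-envelope closure (Theorem~\ref{thmcofinitetype}).

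For injectivity the paper takes a different and more direct route than your recovery formula: Lemma~\ref{L31} shows $\mathcal{C}(\mathfrak{S})_{(i)} = \mathcal{C}((\mathcal{F}_i,\ldots,\mathcal{F}_{n-1}))$, so a discrepancy at the first index where $\mathcal{F}_i \ne \mathcal{F}_i'$ is witnessed by an injective module lying in one $\mathcal{F}_i$ but not the other. Your proposed recovery of $X_i$ as a union of $V(I)$'s is correct in one inclusion, but the reverse---that every finitely generated $I$ with $\ext_R^j(R/I,M)=0$ for all $M\in\mathcal{C}(\mathfrak{S})$ and $j\le i$ must satisfy $V(I)\subseteq X_i$---is not justified and would itself seem to require something like Lemma~\ref{L31}.
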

The proof of Theorem~\ref{T01} will be done in several steps throughout this section, by proving subsequently that $\Psi$ is surjective, well-defined, and injective.
\subsection{$\Psi$ is surjective}
\begin{lem}
	\label{L00}
	Let $R$ be a commutative ring, $M \in \ModR$, and $P$ a finitely generated projective $R$-module. If $\iota: M \rightarrow E(M)$ is the injective envelope of $M$, then $P \otimes_R \iota$ is the injective envelope of $P \otimes_R M$.
\end{lem}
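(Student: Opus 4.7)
The approach is to reduce to the case where $P$ is free, check the claim there directly, and then peel off the direct summand using the fact that injective envelopes split along a direct sum decomposition of the map.

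First, since $P$ is finitely generated projective, I pick $n \geq 0$ and an $R$-module $Q$ with $R^n \cong P \oplus Q$. Tensoring $\iota$ with the identity of $R^n = P \oplus Q$ identifies the map $\iota^n \colon M^n \to E(M)^n$ with the direct sum $(P \otimes_R \iota) \oplus (Q \otimes_R \iota)$ along the corresponding splittings of $M^n$ and $E(M)^n$ into a $P$-part and a $Q$-part.

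Second, I verify that $\iota^n \colon M^n \to E(M)^n$ is itself an injective envelope. Since a finite direct sum of injective modules is injective, $E(M)^n$ is injective. For essentiality, given a nonzero $x = (x_1, \dots, x_n) \in E(M)^n$, I induct on the number of coordinates of $x$ not lying in $M$. Picking the smallest index $k$ with $x_k \notin M$ (hence $x_k \ne 0$), essentiality of $\iota$ yields $r \in R$ with $0 \ne r x_k \in M$. The element $rx$ is nonzero, its coordinates of index below $k$ remain in $M$, and the $k$-th is now $r x_k \in M$; the total count of coordinates outside $M$ has strictly decreased, so the induction closes.

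Third, I use the following splitting property of injective envelopes: if $f_1 \oplus f_2 \colon A_1 \oplus A_2 \to B_1 \oplus B_2$ is an injective envelope, then each $f_i \colon A_i \to B_i$ is also one. Injectivity of $B_1$ is automatic as a direct summand of an injective module. For essentiality of $f_1$, a nonzero submodule $N \subseteq B_1$ meets the image of $f_1 \oplus f_2$ in a nonzero element $(f_1(a_1), f_2(a_2))$, and the $B_2$-component must vanish because $N$ lies inside $B_1 \oplus 0$; injectivity of $f_2$ then forces $a_2 = 0$, placing the witness in $N \cap f_1(A_1)$. Applying this splitting to the decomposition from the first step completes the proof.

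The main subtlety lies in the essentiality argument of the second step; the key point that makes the induction work is that $M$ is closed under $R$-multiplication, so once a coordinate of $x$ lies in $M$ it remains there under any further scaling, ensuring that the count of `bad' coordinates is genuinely monotone decreasing.
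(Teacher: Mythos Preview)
Your proof is correct and follows essentially the same approach as the paper: complement $P$ to a free module, verify that $\iota^n$ is an injective envelope, and then split off the summand. The only difference is that the paper cites \cite[Proposition 6.16(2)]{AF} for the fact that $\iota^n$ is an injective envelope and leaves the splitting step implicit, whereas you supply both arguments directly.
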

\begin{proof}
		Let $P' \in \ModR$ be a finitely generated projective module such that $P \oplus P' \simeq R^n$ for some $n \in \omega$. As $E(M)^n$ is injective, the map $R^n \otimes_R \iota = \iota^n$ is the injective envelope of $M^n$ by~\cite[Proposition 6.16(2)]{AF}. As $\iota^n = (P \otimes_R \iota) \oplus (P' \otimes_R \iota)$, we infer that $P \otimes_R \iota$ is the injective envelope of $P \otimes_R M$.
\end{proof}
\begin{prop}
		\label{P01}
		Let $R$ be a commutative ring and $\mathcal{C}$ be a cotilting class in $\ModR$. If $\mathcal{C}$ is of cofinite type, then it is closed under injective envelopes.
\end{prop}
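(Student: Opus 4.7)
The plan is to reduce the problem to a vanishing of $\tor$-groups and to leverage Lemma~\ref{L00} in a careful downward induction. Since $\mathcal{C}$ is of cofinite type, I would write $\mathcal{C}=\mathcal{S}^\intercal$ for a set $\mathcal{S}\subseteq\modR$ of strongly finitely presented modules. For $M\in\mathcal{C}$ and $S\in\mathcal{S}$ with a finitely generated projective resolution $0\to P_n\to\cdots\to P_0\to S\to 0$, the goal becomes showing $H_i(P_\bullet\otimes_R E(M))=0$ for all $i\ge 1$. Lemma~\ref{L00} is the crucial input: it identifies each $P_j\otimes_R E(M)$ with the injective envelope $E(P_j\otimes_R M)$, so $P_\bullet\otimes_R E(M)$ becomes a bounded complex of injectives that contains the subcomplex $P_\bullet\otimes_R M$ essentially in every degree. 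Throughout I write $Z_i^{E(M)}:=\ker d_i^{E(M)}$ and $B_i^{E(M)}:=\im d_{i+1}^{E(M)}$, and analogously for $M$.

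I would then proceed by downward induction on $i$, simultaneously establishing
\begin{enumerate}
\item[(a)] $H_i(P_\bullet\otimes_R E(M))=0$; and
\item[(b)] $B_i^{E(M)}$ is injective, hence a direct summand of $P_i\otimes_R E(M)$.
\end{enumerate}
The top case $i=n$ is immediate: (b) is vacuous since $B_n^{E(M)}=0$, and any nonzero element of $Z_n^{E(M)}$ would, by essentiality, meet $P_n\otimes_R M$ in a nonzero element of $Z_n^M=\tor_n^R(S,M)=0$, a contradiction. For the inductive step, (a) and (b) at level $i+1$ identify $Z_{i+1}^{E(M)}$ with $B_{i+1}^{E(M)}$, an injective direct summand of $P_{i+1}\otimes_R E(M)$; the short exact sequence $0\to Z_{i+1}^{E(M)}\to P_{i+1}\otimes_R E(M)\to B_i^{E(M)}\to 0$ then splits, so $B_i^{E(M)}$ is injective, and as an injective submodule of $P_i\otimes_R E(M)$ it splits off there as well. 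This delivers (b) at level $i$.

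With (b) in hand at level $i$, I write $P_i\otimes_R E(M)=B_i^{E(M)}\oplus W$ and decompose $Z_i^{E(M)}=B_i^{E(M)}\oplus Z'$ for $Z':=W\cap Z_i^{E(M)}$, so that $H_i(P_\bullet\otimes_R E(M))\cong Z'$. Using the elementary fact that essentiality restricts to direct summands, $(P_i\otimes_R M)\cap W$ is essential in $W$; if $Z'$ were nonzero it would contain a nonzero element $\xi\in P_i\otimes_R M$. But then $\xi\in Z_i^{E(M)}\cap(P_i\otimes_R M)=Z_i^M=B_i^M\subseteq B_i^{E(M)}$ (the middle equality using $M\in\mathcal{C}$), contradicting $\xi\in W$ together with $W\cap B_i^{E(M)}=0$. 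Hence $Z'=0$ and (a) at level $i$ follows, closing the induction.

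The main subtlety will be that the direct-summand property (b) must be propagated alongside the homology vanishing (a): the essentiality argument for (a) at level $i$ uses precisely that $B_i^{E(M)}$ splits off from $P_i\otimes_R E(M)$, which is itself delivered by (a)+(b) at level $i+1$. Bundling the two statements into a single induction is what makes the mechanism work; once that is in place, only Lemma~\ref{L00} and elementary behavior of essential extensions under direct summands are needed.
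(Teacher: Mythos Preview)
Your argument is correct and uses the same key ingredients as the paper's proof: Lemma~\ref{L00} to identify $P_j\otimes_R E(M)$ with $E(P_j\otimes_R M)$, the essentiality of $P_\bullet\otimes_R M$ inside $P_\bullet\otimes_R E(M)$, and the fact that in a bounded complex of injectives the images split off once exactness is established from above. The only organisational difference is that the paper inducts on the bound $n$ of the projective dimensions of the modules in $\mathcal{S}$ (so that the inductive hypothesis, applied to $\{\Omega^{k-1}S\mid S\in\mathcal{S}\}^\intercal$, already gives $\tor_k^R(S,E(M))=0$ for $k>1$, and then only $\tor_1$ needs the essentiality/splitting argument), whereas you perform a single downward induction on the homological degree $i$ within a fixed resolution, carrying along the injectivity of $B_i^{E(M)}$ as part of the induction package. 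Your version is slightly more self-contained (no need to invoke auxiliary cotilting classes), while the paper's version isolates the $\tor_1$ step more cleanly; mathematically the two are the same argument.
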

\begin{proof}
		Fix a module $M \in \mathcal{C}$, and let us show that $E(M) \in \mathcal{C}$, provided that $\mathcal{C}$ is of cofinite type. Under this assumption, there is a set $\mathcal{S}$ of strongly finitely presented modules of projective dimension bounded by $n$ such that $\mathcal{C}=\mathcal{S}^\intercal$. We proceed by induction on $n$. If $n=0$ the claim is clear as $\mathcal{C} = \ModR$; henceforth assume that we proved the claim for all $k<n$. Pick $S \in \mathcal{S}$ and fix its projective resolution
		$$P_\bullet: \quad 0 \rightarrow P_n \xrightarrow{\sigma_n} P_{n-1} \xrightarrow{\sigma_{n-1}} \cdots \xrightarrow{\sigma_1} P_0 \rightarrow 0,$$
		consisting of finitely generated projectives. Tensoring the complex $P_\bullet$ with the injective envelope $\iota: M \rightarrow E(M)$ yields a commutative diagram in $\ModR$ (this is where we use the commutativity of $R$):
	$$\begin{CD}
		0       & &    0    & &    0 \\
		@VVV @VVV @VVV \\
		P_2 \otimes_R M @>\sigma_2 \otimes_R M>> P_1 \otimes_R M @>\sigma_1 \otimes_R M>> P_0 \otimes_R M \\
		@VVV @VP_1 \otimes_R\iota VV @VVV \\
		P_2 \otimes_R E(M) @>\sigma_2 \otimes_R E(M)>> P_1 \otimes_R E(M) @>\sigma_1 \otimes_R E(M)>> P_0 \otimes_R E(M) \\
\end{CD}$$
(In the case of $n=1$ we put $P_2=0$.)

Since $P_\bullet$ consists of projective modules, the columns are exact. We have 
		$$H_1(P_\bullet \otimes_R M) \simeq \tor_1^R(S,M) = 0,$$ 
		as $M \in \mathcal{C}$, and we want to show that $H_1(P_\bullet \otimes_R E(M)) \simeq \tor_1^R(S,E(M) = 0$. If $\Ker (\sigma_1 \otimes_R E(M))=0$ there is nothing to prove. Otherwise, since $P_1 \otimes_R \iota$ is the injective envelope of $P_1 \otimes_R M$ by Lemma~\ref{L00}, $P_1 \otimes_R \iota$ is an essential monomorphism. Therefore, the module
$$ \Ker (\sigma_1 \otimes_R E(M)) \cap \operatorname{Im}(P_1 \otimes_R \iota) = (P_1 \otimes_R \iota)(\Ker (\sigma_1 \otimes_R M))=(P_1 \otimes_R \iota)(\operatorname{Im} (\sigma_2 \otimes_R M)) $$
is non-zero, and thus essential in $\Ker (\sigma_1 \otimes_R E(M))$. It follows that $\operatorname{Im} (\sigma_2 \otimes_R E(M))$ is essential in $\Ker (\sigma_1 \otimes_R E(M))$. Now we use the induction hypothesis, which implies that $\tor_k^R(S,E(M)) \simeq \tor_1^R(\Omega^{k-1}S,E(M))=0$ for all $k>1$. This means that $H_k(P_\bullet \otimes_R E(M))=0$ for all $k>1$, and as this complex consists of injectives and is left-bounded, the map $(\sigma_2 \otimes_R E(M))$ is a split monomorphism. The only case for which this is not a contradiction is when $\operatorname{Im}(\sigma_2 \otimes_R E(M))=\Ker (\sigma_1 \otimes_R E(M))$, showing that $0=H_1(P_\bullet \otimes_R E(M))=\tor_1^R(S,E(M))$, proving finally that $E(M) \in \mathcal{C}$.
\end{proof}
Given a fixed module $C$, we can assign to any module $X$ the canonical map $\eta_X: X \rightarrow C^{\Hom_R(X,C)} = C_X$. This map is in fact (covariantly) functorial, as we recall in the following Lemma.
\begin{lem}
	\label{functorialmap}
	The map $\eta_X$ is functorial in $X$. That is, given any map $X \xrightarrow{f} Y$, there is a map $\eta_f: C_X \rightarrow C_Y$ such that $\eta_Y f = \eta_f \eta_X$, and such that $\eta_g \eta_f = \eta_{gf}$ for any map $g: Y \rightarrow Z$.
\end{lem}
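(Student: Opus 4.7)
The plan is to build $\eta_f$ out of the obvious contravariant map on Hom sets and verify the two compatibilities coordinate-wise. Given $f\colon X \to Y$, precomposition with $f$ yields an $R$-linear map
\[ f^{*}\colon \Hom_R(Y,C) \to \Hom_R(X,C), \quad \psi \mapsto \psi \circ f. \]
Thinking of $C_X = C^{\Hom_R(X,C)}$ and $C_Y = C^{\Hom_R(Y,C)}$ as products indexed by these Hom sets, the map of index sets $f^{*}$ induces, via the universal property of the product, a unique $R$-linear map $\eta_f\colon C_X \to C_Y$ characterized by the condition $\pi_\psi \circ \eta_f = \pi_{\psi f}$ for every $\psi \in \Hom_R(Y,C)$, where $\pi_\varphi\colon C_X \to C$ and $\pi_\psi\colon C_Y \to C$ denote the canonical projections.

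To check the first identity $\eta_Y \circ f = \eta_f \circ \eta_X$, I would use that $\eta_X$ is determined by $\pi_\varphi \circ \eta_X = \varphi$ for $\varphi \in \Hom_R(X,C)$. Thus for any $\psi \in \Hom_R(Y,C)$,
\[ \pi_\psi \circ \eta_f \circ \eta_X = \pi_{\psi f} \circ \eta_X = \psi f = \pi_\psi \circ \eta_Y \circ f, \]
and equality follows from the universal property of $C_Y$. For functoriality, given $g\colon Y \to Z$, the analogous computation shows that for each $\chi \in \Hom_R(Z,C)$,
\[ \pi_\chi \circ \eta_g \circ \eta_f = \pi_{\chi g} \circ \eta_f = \pi_{(\chi g) f} = \pi_{\chi (gf)} = \pi_\chi \circ \eta_{gf}, \]
so again $\eta_g \circ \eta_f = \eta_{gf}$ by the universal property.

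Conceptually, this just records the fact that $X \mapsto C_X$ is the composition of the contravariant functor $\Hom_R(-,C)\colon \ModR \to \mathbf{Sets}$ with the contravariant functor $C^{(-)}\colon \mathbf{Sets}^{\mathrm{op}} \to \ModR$, hence covariant, and that $\eta$ is a natural transformation from the identity into this composition. There is no real obstacle; the only thing to be careful about is keeping the variance and indexing straight, which is why I would phrase the argument through the projections $\pi_\psi$ rather than chasing tuples.
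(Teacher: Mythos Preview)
Your proof is correct and follows essentially the same approach as the paper: both define $\eta_f$ as the reindexing map along $f^* \colon \Hom_R(Y,C) \to \Hom_R(X,C)$, characterized by $\pi_\psi \circ \eta_f = \pi_{\psi f}$, and verify the two identities by comparing projections. The only difference is cosmetic: you invoke the universal property of the product while the paper writes out the coordinates explicitly.
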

\begin{proof}
		For any $\beta \in \Hom_R(Y,C)$, let $\pi_\beta: C_Y \rightarrow C$ be the projection onto the $\beta$-th coordinate. We define $\eta_f$ by the following rule: For any ${\bf c}=(c_\alpha)_{\alpha \in \Hom_R(X,C)} \in C_X$, and any $\beta \in \Hom_R(Y,C)$, we let
		\begin{equation}
				\label{functorialmapE01}
		\pi_\beta \eta_f({\bf c}) = \begin{cases}c_\alpha, & \alpha=\beta f \\ 0, & \text{otherwise}.\end{cases}
		\end{equation}
		It is easy to see that $\eta_f$ is an $R$-module homomorphism. Also, for any $\beta \in \Hom_R(Y,C)$ we have
		$$\pi_\beta \eta_Y f = \beta f = \pi_\beta \eta_f \eta_X,$$
		proving that indeed $\eta_Y f = \eta_f \eta_X$. Finally, the equality $\eta_g\eta_f=\eta_{gf}$ can be checked by direct computation from (\ref{functorialmapE01}).
\end{proof}
\begin{lem}
		\label{L01}
		Let $R$ be a ring and let $\mathcal{C}$ be an cotilting class in $\RMod$ closed under injective envelopes. Then there is a hereditary faithful torsion-free class of finite type $\mathcal{F}$ such that 
		$$\mathcal{C}=\{M \in \ModR \mid M\in \mathcal{F} ~\&~ \Omega^{-1}M \in \mathcal{C}_{(1)}\}.$$
\end{lem}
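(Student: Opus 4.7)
The plan is to take $\mathcal{F}:=\{M\in\RMod \mid E(M)\in\mathcal{C}\}$, let $C$ be a cotilting module inducing $\mathcal{C}={}^\perp C$, and set $\mathcal{D}:=\{M \mid \Omega^{-1}M\in\mathcal{C}_{(1)}\}$. To establish the factorization $\mathcal{C}=\mathcal{F}\cap\mathcal{D}$, I would apply $\Hom_R(-,C)$ to the short exact sequence $0\to M\to E(M)\to\Omega^{-1}M\to 0$ and use that $E(M)$ is injective: whenever $E(M)\in\mathcal{C}$, the connecting maps yield isomorphisms $\ext_R^i(M,C)\cong\ext_R^{i+1}(\Omega^{-1}M,C)$ for every $i\ge 1$. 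Hence for $M\in\mathcal{F}$ the condition $M\in\mathcal{C}$ becomes equivalent to $\ext_R^j(\Omega^{-1}M,C)=0$ for all $j\ge 2$, which is $\Omega^{-1}M\in\mathcal{C}_{(1)}$. This gives $\mathcal{F}\cap\mathcal{D}=\mathcal{C}$ once combined with the easy inclusions $\mathcal{C}\subseteq\mathcal{F}$ (injective-envelope hypothesis) and $\mathcal{C}\subseteq\mathcal{D}$ (same isomorphism applied to any $M\in\mathcal{C}$).

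It then remains to check that $\mathcal{F}$ is a hereditary faithful torsion-free class of finite type, and for this I would rely on the standard closure properties of cotilting classes under direct summands, products, extensions and direct limits. Faithfulness is clear since $R$ is projective, so $R\in\mathcal{C}$ and $E(R)\in\mathcal{C}$ by hypothesis. Hereditariness is trivial because $E(E(M))=E(M)$. Submodule closure: for $M'\hookrightarrow M$ the injective $E(M')$ is a direct summand of $E(M)$, so $E(M)\in\mathcal{C}$ gives $E(M')\in\mathcal{C}$ by summand closure. Product closure: $\prod E(M_\alpha)\in\mathcal{C}$ by product closure of $\mathcal{C}$, and $E(\prod M_\alpha)$ embeds into it as an injective summand. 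Direct-limit closure (finite type): $N:=\varinjlim E(M_\alpha)\in\mathcal{C}$ by definability, then $E(N)\in\mathcal{C}$ by the hypothesis, and $E(\varinjlim M_\alpha)$ is a summand of $E(N)$.

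The step requiring the most care is closure of $\mathcal{F}$ under extensions, which I expect to handle by the following trick. Given $0\to M'\to M\to M''\to 0$ with $E(M'),E(M'')\in\mathcal{C}$, extend the inclusion $M'\hookrightarrow E(M')$ along $M'\hookrightarrow M$ by injectivity of $E(M')$ to obtain $\alpha\colon M\to E(M')$, and compose $M\twoheadrightarrow M''\hookrightarrow E(M'')$ to get $\gamma\colon M\to E(M'')$. The pairing $(\alpha,\gamma)\colon M\to E(M')\times E(M'')$ is injective: if $\gamma(m)=0$ then $m\in M'$, and $\alpha$ restricted to $M'$ is the inclusion into $E(M')$, so $\alpha(m)=0$ forces $m=0$. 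Since $E(M')\times E(M'')$ is injective and belongs to $\mathcal{C}$ by product closure, $E(M)$ embeds into it as a direct summand, and summand closure of $\mathcal{C}$ delivers $E(M)\in\mathcal{C}$, finishing the proof.
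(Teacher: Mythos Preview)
Your factorization $\mathcal{C}=\mathcal{F}\cap\mathcal{D}$ and the verifications of submodule, product, extension, and injective-envelope closure are correct and match the paper's approach (your definition $\mathcal{F}=\{M\mid E(M)\in\mathcal{C}\}$ coincides with the paper's ``closure of $\mathcal{C}$ under submodules'').

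The gap is in your direct-limit argument. You write $N:=\varinjlim E(M_\alpha)$, but given a directed system $(M_\alpha,f_{\alpha\beta})$ there is no canonical directed system on the $E(M_\alpha)$'s: each $f_{\alpha\beta}$ extends to some $g_{\alpha\beta}\colon E(M_\alpha)\to E(M_\beta)$ by injectivity, but these extensions are not unique and there is no reason for the cocycle condition $g_{\beta\gamma}g_{\alpha\beta}=g_{\alpha\gamma}$ to hold. So $\varinjlim E(M_\alpha)$ is not well defined, and the argument collapses.

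The paper repairs exactly this point by replacing the injective envelope with a \emph{functorial} embedding. It observes that $\mathcal{F}=\cogen(C)$ and uses the coevaluation map $\eta_X\colon X\to C^{\Hom_R(X,C)}=:C_X$, which is genuinely functorial in $X$ (Lemma~\ref{functorialmap}). One then gets a directed system of monomorphisms $(M_\alpha\hookrightarrow C_{M_\alpha})$, and since each $C_{M_\alpha}\in\mathcal{C}$ and $\mathcal{C}$ is definable, the limit $\varinjlim M_\alpha$ embeds into $\varinjlim C_{M_\alpha}\in\mathcal{C}$, giving $\varinjlim M_\alpha\in\mathcal{F}$. Plugging this step in for your direct-limit paragraph, your proof goes through.
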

\begin{proof}
		Let $C$ be the cotilting module associated to $\mathcal{C}$. Applying $\Hom_R(-,C)$ to the exact sequence $0 \rightarrow M \rightarrow E(M) \rightarrow \Omega^{-1}M \rightarrow 0$, and using that $\mathcal{C}$ is closed under injective envelopes by Proposition~\ref{P01}, we infer that $M \in \mathcal{C}$ if and only if $E(M) \in \mathcal{C}$ and $\Omega^{-1}M \in \mathcal{C}_{(1)}$.

	We let $\mathcal{F}$ be the closure of $\mathcal{C}$ under submodules. As $\mathcal{C}$ is closed under injective envelopes, injectives of $\mathcal{C}$ and $\mathcal{F}$ coincide. From this it is easy to infer that $\mathcal{C}=\{M \in \ModR \mid M\in \mathcal{F} ~\&~ \Omega^{-1}M \in \mathcal{C}_{(1)}\}$. We are left to show that $\mathcal{F}$ is a hereditary faithful torsion-free class of finite type. It is easy to check that $R \in \mathcal{F}$, and that $\mathcal{F}$ is closed under submodules, injective envelopes, extensions, and products. 
	
		It remains to show that $\mathcal{F}$ is closed under direct limits. Note that $\mathcal{F}=\cogen(C)$. Let $(X_i)_{i \in I}$ be a directed system of modules from $\mathcal{F}$. As $X_i$ is cogenerated by $C$, the canonical map $\eta_{X_i}: X_i \rightarrow C_{X_i}$ is monic. Using the functoriality proved in Lemma~\ref{functorialmap}, we actually have a directed system $(X_i \rightarrow C_{X_i})_{i \in I}$ of monic maps. Taking the direct limit yields a monic map $\varinjlim_{i \in I}X_i \rightarrow \varinjlim_{i \in I}C_{X_i}$. As $\mathcal{C}$ is definable, the latter direct limit is in $\mathcal{C}$, proving that $\varinjlim_{i \in I} X_i$ is indeed in $\mathcal{F}$.
\end{proof}
\begin{lem}
		\label{L02}
		Let $R$ be a commutative ring and $\mathcal{C}$ be an $n$-cotilting class in $\ModR$ of cofinite type. Then there is a characteristic sequence $(\mathcal{F}_0,\mathcal{F}_1,\ldots,\mathcal{F}_{n-1})$ such that $\mathcal{C}=\{M \in \ModR \mid \Omega^{-i}M \in \mathcal{F}_i \text{ for each $i=0,1,\ldots,n-1$}\}$. In particular, map $\Psi$ is surjective.
\end{lem}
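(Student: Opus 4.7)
The plan is induction on $n$, using Lemma~\ref{L01} as the recursive step. The base case $n=0$ is trivial: $\mathcal{C}=\ModR$, and the empty sequence satisfies all defining conditions with $\mathcal{C}(\mathfrak{S})=\ModR$.

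For the inductive step with $n\ge 1$, I first invoke Proposition~\ref{P01} (applicable because $\mathcal{C}$ is of cofinite type) to see that $\mathcal{C}$ is closed under injective envelopes, and then Lemma~\ref{L01} furnishes a hereditary torsion-free class $\mathcal{F}_0$ of finite type together with the description
\[
\mathcal{C}=\{M\in\ModR \mid M\in\mathcal{F}_0 \text{ and } \Omega^{-1}M\in\mathcal{C}_{(1)}\}.
\]
The key preliminary step is to verify that $\mathcal{C}_{(1)}$ is itself an $(n-1)$-cotilting class \emph{of cofinite type}. The cotilting property is the content of \cite[Lemma 3.5]{APST}. For cofinite type, I would exploit the identification $\mathcal{C}_{(1)} = \{M : \ext_R^j(M,C)=0,\, j\ge 2\}$ read from the $\ext$ long exact sequence of $0\to C\to E_0\to \Omega^{-1}C\to 0$, combine it with the adjunction isomorphism $\ext_R^j(M,T^+)\cong \tor_j^R(M,T)^+$ (as in the proof of Lemma~\ref{L78}), and match it against Lemma~\ref{L78} applied to the $(n-1)$-tilting class associated to $\mathcal{C}_{(1)}$, obtained as a shift of the tilting class associated to $\mathcal{C}$ via Theorem~\ref{T:cofinitetype}.

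Granted this, applying the inductive hypothesis to $\mathcal{C}_{(1)}$ yields a characteristic sequence $(\mathcal{G}_0,\ldots,\mathcal{G}_{n-2})$ with $\mathcal{C}_{(1)}=\mathcal{C}((\mathcal{G}_0,\ldots,\mathcal{G}_{n-2}))$. Setting $\mathcal{F}_i:=\mathcal{G}_{i-1}$ for $i=1,\ldots,n-1$ gives the candidate $\mathfrak{S}=(\mathcal{F}_0,\ldots,\mathcal{F}_{n-1})$, and the three defining conditions are then straightforward: hereditariness and finite type of each $\mathcal{F}_i$ follow from Lemma~\ref{L01} and the inductive hypothesis; the chain inclusion reduces to $\mathcal{F}_0\subseteq\mathcal{F}_1$, which holds because $\mathcal{C}\subseteq\mathcal{C}_{(1)}$ (read from the same $\ext$ long exact sequence) and the inclusion propagates to submodule closures; and since $R\in\mathcal{C}$ (as $R$ is flat, so all $\tor_{\ge 1}(S,R)$ vanish), one has $R\in\mathcal{F}_0$ and $\Omega^{-1}R\in\mathcal{C}_{(1)}$, so the inductive hypothesis applied to $\mathcal{C}_{(1)}$ provides $\Omega^{-i}R\in\mathcal{F}_i$ for each $i\ge 1$. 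Unwinding then gives $\mathcal{C}(\mathfrak{S})=\mathcal{C}$: the conjunction of $\Omega^{-i}M\in\mathcal{F}_i$ for $i=1,\ldots,n-1$ is, by the inductive hypothesis, equivalent to $\Omega^{-1}M\in\mathcal{C}_{(1)}$, which together with $M\in\mathcal{F}_0$ reproduces the Lemma~\ref{L01} description of $\mathcal{C}$. The principal technical hurdle is establishing cofinite-type inheritance from $\mathcal{C}$ to $\mathcal{C}_{(1)}$; once that is pinned down, the rest of the argument is straightforward bookkeeping around the one-step recursion.
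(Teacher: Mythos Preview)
Your proposal is correct and follows the same inductive strategy as the paper: peel off one torsion-free class at a time via Proposition~\ref{P01} and Lemma~\ref{L01}, and recurse on $\mathcal{C}_{(1)}$. The only substantive difference is in how cofinite type of $\mathcal{C}_{(1)}$ is established. The paper does this in one line on the cotilting side: if $\mathcal{C}=\mathcal{S}^\intercal$ with $\mathcal{S}\subseteq\modR$ resolving of projective dimension $\le n$, then $\mathcal{C}_{(1)}=\{M:\Omega M\in\mathcal{C}\}=\{\Omega S:S\in\mathcal{S}\}^\intercal$, and the syzygies $\Omega S$ are again strongly finitely presented of projective dimension $\le n-1$. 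Your route through the tilting module $T$ with $C=T^+$, the Ext--Tor adjunction, and an unspecified ``shift'' of the associated tilting class is workable but circuitous, and once you make precise what the shifted tilting class is (namely $\{\Omega S\}^\perp$ via Theorem~\ref{T:finitetype}), you recover exactly the paper's computation anyway. So there is no genuine gap, just an unnecessary detour through the tilting side for a fact that is immediate on the cotilting side by Tor dimension-shifting.
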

\begin{proof}
		First observe that $\mathcal{C}_{(1)}$ is of cofinite type provided that $\mathcal{C}$ is. Indeed, if $\mathcal{C}=\mathcal{S}^\intercal$ for some resolving subcategory of $\modR$ consisting of modules of bounded projective dimension, we have $\mathcal{C}_{(1)}=\{M \in \ModR \mid \Omega M \in \mathcal{C}\}=\{\Omega S \mid S \in \mathcal{S}\}^\intercal$, demonstrating the cofinite type of $\mathcal{C}_{(1)}$. By Proposition~\ref{P01}, we can apply Lemma~\ref{L01} inductively $(n-1)$-times in order to obtain the desired sequence $(\mathcal{F}_0,\mathcal{F}_1,\ldots,\mathcal{F}_{n-1})$, where $\mathcal{F}_{n-1} = \mathcal{C}_{(n-1)}$. Using the same Lemma and Proposition, this sequence is indeed characteristic, and $\mathcal{C}=\mathcal{C}((\mathcal{F}_0,\mathcal{F}_1,\ldots,\mathcal{F}_{n-1}))$ as desired.
\end{proof}
\subsection{$\Psi$ is well-defined}
	\begin{definition}
			Let $\mathfrak{S}$ be a characteristic sequence of length $n$. We let $\mathcal{G}_i(\mathfrak{S})$ denote the Gabriel topology associated to the torsion-free class $\mathcal{F}_i(\mathfrak{S})$ in the sense of Proposition~\ref{P:torsion} for each $i=0,1,\ldots,n-1$.

		In particular, $\mathcal{G}_i(\mathfrak{S})$ is a Gabriel topology of finite type, and 
		$$\mathcal{F}_i(\mathfrak{S})=\bigcap_{I \in \mathcal{G}_i^f(\mathfrak{S})} \Ker \Hom{}_R(R/I,-).$$
	\end{definition}
	\begin{lem}
			\label{L27}
		Let $\mathfrak{S}$ be a characteristic sequence of length $n$. Then 
		$$\mathcal{C}(\mathfrak{S})=\bigcap_{i=0}^{n-1}\bigcap_{I \in \mathcal{G}_i^{f}(\mathfrak{S})} \Ker \ext{}_R^i(R/I,-)$$.
	\end{lem}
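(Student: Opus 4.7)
The plan is to unpack both sides using the $\vass$-description of hereditary torsion-free classes of finite type (Proposition~\ref{P:torsion-free-Thomason}), and then to bridge the two descriptions via the homological characterization in Proposition~\ref{P:grade-vass}. By construction, the Thomason set attached to $\mathcal{F}_i(\mathfrak{S})$ under Proposition~\ref{P:torsion-Thomason} equals $X_i=\bigcup_{I\in\mathcal{G}_i^f(\mathfrak{S})}V(I)$. Hence Proposition~\ref{P:torsion-free-Thomason} gives
$$\Omega^{-i}M \in \mathcal{F}_i(\mathfrak{S}) \quad \Longleftrightarrow \quad \vass(\Omega^{-i}M) \cap V(I) = \emptyset \text{ for every } I\in\mathcal{G}_i^f(\mathfrak{S}),$$
so membership $M\in\mathcal{C}(\mathfrak{S})$ translates into the family of conditions $\vass(\Omega^{-i}M)\cap V(I)=\emptyset$ ranging over all $0\le i\le n-1$ and all $I\in\mathcal{G}_i^f(\mathfrak{S})$.

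Next I will use a monotonicity that comes directly from condition (ii) in the definition of a characteristic sequence. Since $\mathcal{F}_0(\mathfrak{S})\subseteq\cdots\subseteq\mathcal{F}_{n-1}(\mathfrak{S})$ and a larger torsion-free class corresponds to a smaller torsion class, the associated finite Gabriel topologies reverse-nest:
$$\mathcal{G}_0^f(\mathfrak{S}) \supseteq \mathcal{G}_1^f(\mathfrak{S}) \supseteq \cdots \supseteq \mathcal{G}_{n-1}^f(\mathfrak{S}).$$
In particular, whenever $I\in\mathcal{G}_i^f(\mathfrak{S})$ one also has $I\in\mathcal{G}_j^f(\mathfrak{S})$ for every $j\le i$.

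With both reformulations at hand, each inclusion becomes a direct invocation of Proposition~\ref{P:grade-vass}. For ``$\subseteq$'', fix $M\in\mathcal{C}(\mathfrak{S})$, an index $0\le i\le n-1$, and $I\in\mathcal{G}_i^f(\mathfrak{S})$; by the monotonicity above, $I$ lies in every $\mathcal{G}_j^f(\mathfrak{S})$ with $j\le i$, so the first paragraph yields $\vass(\Omega^{-j}M)\cap V(I)=\emptyset$ for all such $j$, and Proposition~\ref{P:grade-vass} (applied with ``$n$'' replaced by $i+1$) gives $\ext_R^j(R/I,M)=0$ for $j=0,\dots,i$, in particular for $j=i$. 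For the reverse inclusion, given $M$ in the right-hand side together with any $i$ and $I\in\mathcal{G}_i^f(\mathfrak{S})$, monotonicity again places $I$ in every $\mathcal{G}_j^f(\mathfrak{S})$ with $j\le i$, so by hypothesis $\ext_R^j(R/I,M)=0$ for all such $j$, and Proposition~\ref{P:grade-vass} returns $\vass(\Omega^{-j}M)\cap V(I)=\emptyset$ for $j=0,\dots,i$, in particular at $j=i$, which translates back to $\Omega^{-i}M\in\mathcal{F}_i(\mathfrak{S})$.

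No serious obstacle is expected; the only mild subtlety is that Proposition~\ref{P:grade-vass} converts a whole initial segment of $\ext$-vanishings into a whole initial segment of $\vass$-conditions rather than comparing the two level by level. This is exactly why the reverse-nesting of the finite Gabriel topologies provided by condition (ii) of a characteristic sequence is indispensable: testing at level $i$ must automatically bring all lower levels into scope.
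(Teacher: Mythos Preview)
Your proof is correct and takes a genuinely different route from the paper's. The paper argues by induction on the length of the truncated sequence, performing an explicit dimension shift at each step: for $M\in\mathcal{C}((\mathcal{F}_0,\ldots,\mathcal{F}_{k-1}))$ and $I\in\mathcal{G}_k^f$, it shows $\Hom_R(R/I,\Omega^{-k}M)\cong\ext_R^k(R/I,M)$ using the long exact sequence and the fact that $E(\Omega^{-(k-1)}M)\in\mathcal{F}_{k-1}\subseteq\mathcal{F}_k$. You instead factor everything through the $\vass$/Thomason description of torsion-free classes and invoke Proposition~\ref{P:grade-vass}, which already packages the dimension-shifting argument once and for all for a single ideal.

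What each approach buys: the paper's argument is self-contained at this point in the exposition and makes the role of the nesting $\mathcal{F}_{k-1}\subseteq\mathcal{F}_k$ visible directly in the Ext computation. Your argument is shorter and more conceptual, since it cleanly separates the two ingredients---the per-ideal equivalence (Proposition~\ref{P:grade-vass}) and the monotonicity $\mathcal{G}_0^f\supseteq\cdots\supseteq\mathcal{G}_{n-1}^f$---and avoids re-running the induction. Your observation that condition~(ii) of a characteristic sequence is exactly what bridges the ``initial segment'' nature of Proposition~\ref{P:grade-vass} with the level-by-level definition of $\mathcal{C}(\mathfrak{S})$ is the same key point the paper uses, just phrased differently.
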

	\begin{proof}
			Let $\mathfrak{S}=(\mathcal{F}_0, \mathcal{F}_1,\ldots,\mathcal{F}_{n-1})$. We prove by induction on $0 < k \leq n$ that $\mathcal{C}((\mathcal{F}_0, \mathcal{F}_1, \ldots, \mathcal{F}_{k-1})) = \bigcap_{i=0}^{k-1} \bigcap_{I \in \mathcal{G}_i^f(\mathfrak{S})} \Ker \ext{}_R^i(R/I,-)$. If $k=1$, then indeed $C((\mathcal{F}_0)) = \bigcap_{I \in \mathcal{G}_0^f(\mathfrak{S})} \Ker \Hom_R(R/I,-)$. Suppose that the claim is valid up to $k-1$ for $0<k-1<n$, and let $I \in \mathcal{G}_{k}^f$. Let $M \in C((\mathcal{F}_0, \mathcal{F}_1, \ldots, \mathcal{F}_{k-1}))$. The long exact sequence obtained by applying $\Hom_R(R/I,-)$ on the exact sequence $0 \rightarrow M \rightarrow E(M) \rightarrow \Omega^{-1}M \rightarrow 0$ yields an isomorphism $\ext_R^{k-1}(R/I, \Omega^{-1}M) \simeq \ext_R^k(R/I,M)$ (for $k=1$ we use the fact that $E(M) \in \mathcal{F}_0 \subseteq \mathcal{F}_k$). 
			
			By dimension shifting, we have $\ext_R^{k-1}(R/I,\Omega^{-1}M) \simeq \ext_R^1(R/I, \Omega^{-k+1}M)$. Finally, since $E(\Omega^{-k+1} M) \in \mathcal{F}_{k-1} \subseteq \mathcal{F}_k$, we have also the zero-dimension shift isomorphism $\ext_R^1(R/I, \Omega^{-k+1}M) \simeq \Hom_R(R/I, \Omega^{-k}M)$. Putting the isomorphisms together, we have $\Hom_R(R/I, \Omega^{-k} M) \simeq \ext_R^k(R/I, M)$, showing that $\Omega^{-k} M \in \mathcal{F}_k$ if and only if $M \in \bigcap_{I \in \mathcal{G}_k^f} \Ker \ext_R^k(R/I,-)$ for any $M \in \mathcal{C}(\mathcal{F}_0,\mathcal{F}_1,\ldots, \mathcal{F}_{k-1})$. Therefore, using the induction premise, an arbitrary module $M \in \ModR$ satisfies $M \in \mathcal{C}(\mathcal{F}_0,\mathcal{F}_1,\ldots,\mathcal{F}_k)$ if and only if $M \in \bigcap_{i=0}^{k} \bigcap_{I \in \mathcal{G}_i^f(\mathfrak{S})} \Ker \ext{}_R^i(R/I,-)$, establishing the induction step.
	\end{proof}

	In what follows, we denote by $(-)^*$ the regular module duality $\Hom_R(-,R)$.
	\begin{definition}
			\label{D11}
		Let $I$ be a finitely generated ideal, and let us denote the Koszul complex $K_\bullet(I)$ as follows
		$$\cdots \xrightarrow {d_{n+1}} F_n \xrightarrow {d_n} \cdots \xrightarrow{d_2} F_1 \xrightarrow{d_1} F_0 \rightarrow 0,$$
		where $F_k$ is in degree $k$ for all $k=0,1,2,\ldots,n+1$. We denote by $S_{I,k}$ the cokernel of the map $d^*_{k}$. That is, $S_{I,k}$ is (up to stable equivalence) the Auslander-Bridger transpose of $\Coker(d_{k})$.
	\end{definition}
	\begin{prop}
		\label{P14}
		Let $I$ be a finitely generated ideal such that $\ext_R^i(R/I,R)=0$ for all $i=0,1,\ldots,n-1$. Then:
		\begin{enumerate}
				\item[(i)] $S_{I,n}$ is a strongly finitely presented module of projective dimension $n$,
				\item[(ii)] $\bigcap_{i=0}^{n-1}\Ker \ext_R^i(R/I,-) = (S_{I,n})^\intercal$,
				\item[(iii)] $\bigcap_{i=0}^{n-1}\Ker \ext_R^i(R/I,-)$ is an $n$-cotilting class of cofinite type.
		\end{enumerate}
		\begin{proof}
				Let us adopt the notation for $K_\bullet(I)$ as in Definition~\ref{D11}. Let $0 < k \leq n$. Applying $(-)^*$ to $K_\bullet(I)$ we obtain complex
				\begin{equation}\label{E11}
		0 \rightarrow F_0^* \rightarrow F_{1}^* \rightarrow \cdots \rightarrow F_{k-1}^* \xrightarrow{d^*_n} F_k^* \rightarrow S_{I,k} \rightarrow 0,
\end{equation}
				which is exact by our assumption and Proposition~\ref{P:Koszul-ext}. This proves $(i)$.

				Denote by $C_\bullet$ the complex (\ref{E11}) with $S_{I,k}$ deleted, and $F_k^*$ in the degree zero. Then $C_\bullet$ is a projective resolution of $S_{I,k}$, and thus $H_1(C_\bullet \otimes_R M) \simeq \tor_1^R(S_{I,k},M)$. But as $F_j$ is finitely generated projective for all $j=0,1,\ldots,n$, we have by \cite[Proposition 20.10]{AF} a natural isomorphism $F_j^* \otimes_R M \simeq \Hom_R(F_j,M)$. Hence, using Proposition~\ref{P:Koszul-ext} we obtain
		\begin{equation}\label{E12}{\tor}_1^R(S_{I,k},M) \simeq H_1(C_\bullet \otimes_R M) \simeq \end{equation}
				$$ \simeq H^{k-1}({\Hom}_R(K_\bullet(I),M) \simeq {\ext}_R^{k-1}(R/I,M),$$
				for any $M \in \bigcap_{i=0}^{k-2}\Ker\ext_R^i(R/I,-)$.

				Now we prove $(ii)$. Note first that $S_{I,k}$ is an $(n-k)$-th syzygy of $S_{I,n}$. Hence, $\tor_k^R(S_{I,n},M) \simeq \tor_1^R(S_{I,n-k+1},M)$. Then $(S_{I,n})^\intercal = \bigcap_{k=1}^n \Ker \tor_1^R(S_{I,k},-)$. A straightforward induction on $k=1,2,\ldots,n$ together with (\ref{E12}) proves that the latter class is equal to $\bigcap_{k=1}^n \Ker\ext_R^{k-1} (R/I,-)$ as desired.

				Finally, $(iii)$ follows directly from $(ii)$ by Theorem~\ref{T:cofinitetype}. 
		\end{proof}
	\end{prop}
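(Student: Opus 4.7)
The plan is to exploit the $R$-dual $(-)^* = \Hom_R(-,R)$ applied to the Koszul complex $K_\bullet(I)$, turning the hypothesis $\ext_R^i(R/I, R) = 0$ for $i = 0, \ldots, n-1$ into exactness of the dualized complex. Since Proposition~\ref{P:Koszul-ext} (applied with $M = R$) identifies $\ext_R^i(R/I, R)$ with the Koszul cohomology $H^i(I; R) = H^i(\Hom_R(K_\bullet(I), R))$, the cohomology of the dualized Koszul complex vanishes in degrees $0, 1, \ldots, n-1$. Since each $F_k$ is finitely generated free, this produces an exact sequence
$$0 \to F_0^* \to F_1^* \to \cdots \to F_{n-1}^* \to F_n^* \to S_{I,n} \to 0,$$
which is a finite projective resolution of $S_{I,n}$ by finitely generated projectives. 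This already establishes (i): $S_{I,n}$ is strongly finitely presented with $\pd S_{I,n} \leq n$.

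For (ii), the strategy is dimension shifting combined with induction. Truncating the resolution above gives, for each $1 \leq k \leq n$, a finite projective resolution $C_\bullet$ of $S_{I,k}$ of length $k$. Using the natural isomorphism $F_j^* \otimes_R M \cong \Hom_R(F_j, M)$ for finitely generated projectives, $C_\bullet \otimes_R M$ is precisely the complex $\Hom_R(K_\bullet(I), M)$ read in the opposite direction and truncated at degree $k$, so that
$$\tor_1^R(S_{I,k}, M) \cong H_1(C_\bullet \otimes_R M) \cong H^{k-1}(I; M).$$
Provided $M \in \bigcap_{i=0}^{k-2} \Ker \ext_R^i(R/I, -)$, Proposition~\ref{P:Koszul-ext} identifies this Koszul cohomology with $\ext_R^{k-1}(R/I, M)$. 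Since each $S_{I,k}$ is (up to projective equivalence) an $(n-k)$-th syzygy of $S_{I,n}$, we have $(S_{I,n})^\intercal = \bigcap_{k=1}^n \Ker \tor_1^R(S_{I,k}, -)$, and an induction on $k$ (converting vanishing of the $(k-1)$-th $\ext$ into vanishing of $\tor_1^R(S_{I,k}, -)$ via the isomorphism above) yields the desired equality.

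Part (iii) is then immediate from (i), (ii), and Theorem~\ref{T:cofinitetype}: the class $\bigcap_{i=0}^{n-1}\Ker \ext_R^i(R/I,-) = \{S_{I,n}\}^\intercal$ is the $\tor$-perpendicular class of a single strongly finitely presented module of projective dimension at most $n$, hence an $n$-cotilting class of cofinite type. The main technical subtlety is the careful bookkeeping of indexing conventions, since the Koszul complex is homologically indexed, its $R$-dual is naturally cohomologically indexed, and the resolution of $S_{I,k}$ is a truncated reversal of the dual; and the induction in (ii) must be organised so that the hypothesis of Proposition~\ref{P:Koszul-ext} (prior vanishing of the lower $\ext$ groups on $M$) is available at each step.
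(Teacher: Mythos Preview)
Your proposal is correct and follows essentially the same route as the paper: dualize the Koszul complex, use Proposition~\ref{P:Koszul-ext} with $M=R$ to get exactness and hence the finite projective resolution of $S_{I,n}$ (and of each $S_{I,k}$), identify $\tor_1^R(S_{I,k},M)\cong H^{k-1}(I;M)$ via $F_j^*\otimes_R M\cong\Hom_R(F_j,M)$, apply Proposition~\ref{P:Koszul-ext} again under the inductive vanishing hypothesis to reach $\ext_R^{k-1}(R/I,M)$, and then use the syzygy relation among the $S_{I,k}$ to assemble $(S_{I,n})^\intercal$; part (iii) is then Theorem~\ref{T:cofinitetype}. One minor remark: your argument (like the paper's) only establishes $\pd S_{I,n}\le n$, which is all that is needed for (iii).
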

	\begin{lem}
			\label{L28}
			Let $\mathfrak{S}$ be a characteristic sequence of length $n$, then $\mathcal{C}(\mathfrak{S})$ is a $n$-cotilting class of cofinite type. In particular, map $\Psi$ is well-defined.
	\end{lem}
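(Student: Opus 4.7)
The plan is to exhibit $\mathcal{C}(\mathfrak{S})$ as $\mathcal{S}^\intercal$ for an explicit set $\mathcal{S}$ of strongly finitely presented modules of projective dimension bounded by $n$, and then invoke Theorem~\ref{T:cofinitetype}. The building blocks will be the Auslander--Bridger transposes $S_{I,i+1}$ furnished by Proposition~\ref{P14}, indexed by the ideals in the Gabriel topologies $\mathcal{G}_i^f(\mathfrak{S})$.

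The first step is to produce the Ext-vanishings required to feed into Proposition~\ref{P14}. Condition (iii) of a characteristic sequence is precisely the statement $R \in \mathcal{C}(\mathfrak{S})$, so Lemma~\ref{L27} applied to $M = R$ already yields $\ext_R^i(R/I,R) = 0$ for every $i \in \{0,\ldots,n-1\}$ and every $I \in \mathcal{G}_i^f(\mathfrak{S})$. The monotonicity $\mathcal{F}_0 \subseteq \cdots \subseteq \mathcal{F}_{n-1}$ translates, via Proposition~\ref{P:torsion}, into a reverse chain $\mathcal{G}_0(\mathfrak{S}) \supseteq \cdots \supseteq \mathcal{G}_{n-1}(\mathfrak{S})$ of Gabriel topologies. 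Combining these, for any $I \in \mathcal{G}_i^f(\mathfrak{S})$ I obtain the full downward vanishing $\ext_R^j(R/I,R) = 0$ for all $j = 0,1,\ldots,i$, which is exactly the hypothesis needed to apply Proposition~\ref{P14} with $n$ replaced by $i+1$.

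Proposition~\ref{P14} then supplies, for each such pair $(i,I)$, a strongly finitely presented $R$-module $S_{I,i+1}$ of projective dimension $i+1 \le n$ satisfying
$$(S_{I,i+1})^\intercal = \bigcap_{j=0}^{i}\Ker\ext_R^j(R/I,-).$$
I then take $\mathcal{S}$ to be the collection of all these modules and verify $\mathcal{C}(\mathfrak{S}) = \mathcal{S}^\intercal$ by comparing with the Ext-presentation of $\mathcal{C}(\mathfrak{S})$ in Lemma~\ref{L27}: the inclusion $\mathcal{S}^\intercal \subseteq \mathcal{C}(\mathfrak{S})$ is immediate since $\mathcal{S}^\intercal$ demands strictly more vanishings, while the reverse inclusion uses once again the chain of Gabriel topologies to recover lower-level vanishings from the defining higher-level ones. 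Since $\mathcal{S}$ consists of strongly finitely presented modules of projective dimension bounded by $n$, Theorem~\ref{T:cofinitetype} finally delivers that $\mathcal{C}(\mathfrak{S})$ is an $n$-cotilting class of cofinite type, which is precisely the well-definedness of $\Psi$. The one genuinely delicate point is the index bookkeeping: Lemma~\ref{L27} offers Ext-vanishing only at a single level $i$ per ideal $I \in \mathcal{G}_i^f(\mathfrak{S})$, whereas Proposition~\ref{P14} requires vanishing at all smaller levels, and it is the monotonicity of the $\mathcal{F}_i$'s that bridges this gap.
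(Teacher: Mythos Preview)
Your proof is correct and follows essentially the same route as the paper: use Lemma~\ref{L27} to rewrite $\mathcal{C}(\mathfrak{S})$ via $\ext$-vanishings, use the monotonicity $\mathcal{G}_0^f \supseteq \cdots \supseteq \mathcal{G}_{n-1}^f$ to upgrade single-level vanishing to the full range $j=0,\ldots,i$, apply Proposition~\ref{P14}(ii) to identify each such intersection with $(S_{I,i+1})^\intercal$, and conclude via Theorem~\ref{T:cofinitetype}. If anything, you are slightly more explicit than the paper in verifying the hypothesis of Proposition~\ref{P14} (by reading condition~(iii) of a characteristic sequence as $R \in \mathcal{C}(\mathfrak{S})$ and feeding $M=R$ into Lemma~\ref{L27}), a step the paper leaves implicit.
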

	\begin{proof}
			We have the following chain of equalities:
			$$\mathcal{C}(\mathfrak{S})=\bigcap_{i=0}^{n-1}\bigcap_{I \in \mathcal{G}_i^{f}(\mathfrak{S})} \Ker \ext{}_R^i(R/I,-) = \bigcap_{i=0}^{n-1}\bigcap_{I \in \mathcal{G}_i^{f}(\mathfrak{S})} \bigcap_{j=0}^i \Ker \ext{}_R^j(R/I,-)=$$
	$$=\bigcap_{i=0}^{n-1}\bigcap_{I \in \mathcal{G}_i^{f}(\mathfrak{S})} (S_{I,i+1})^\intercal.$$
	The first equality is Lemma~\ref{L27}, the second one follows easily from $\mathcal{G}_i^f \supseteq \mathcal{G}_{i+1}^f$ for each $i=0,1,\ldots,n-2$, and the last one is an application of Proposition~\ref{P14}(ii). Then Theorem~\ref{T:cofinitetype} yields the result.
	\end{proof}
\subsection{$\Psi$ is injective}
\begin{lem}
	\label{L31}
	Let $\mathfrak{S}=(\mathcal{F}_0,\mathcal{F}_1,\ldots,\mathcal{F}_{n-1})$ be a characteristic sequence, and $\mathcal{C}=\mathcal{C}(\mathfrak{S})$ the associated $n$-cotilting class. Then 
	\begin{enumerate}
			\item[(i)] $\Omega M \in \mathcal{C}((\mathcal{F}_0,\mathcal{F}_1,\ldots,\mathcal{F}_{n-1}))$ if and only if $M \in \mathcal{C}((\mathcal{F}_1,\mathcal{F}_2,\ldots,\mathcal{F}_{n-1}))$,
		\item[(ii)] $\mathcal{C}_{(i)}=\mathcal{C}((\mathcal{F}_i,\mathcal{F}_{i+1},\ldots,\mathcal{F}_{n-1}))$ for each $i=0,1,\ldots,n$.
	\end{enumerate}
\end{lem}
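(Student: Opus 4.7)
Plan: The content of (i) is the technical heart; (ii) will then follow by iteration. For (i), my approach is to reformulate both sides as ext-vanishing conditions via Lemma~\ref{L27} and to connect them through the dimension shift induced by a short exact sequence $0\to\Omega M\to P_0\to M\to 0$, where $P_0$ is a free cover of $M$.

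First I would verify that $(\mathcal{F}_1,\ldots,\mathcal{F}_{n-1})$ is itself a characteristic sequence of length $n-1$; the only non-trivial axiom is (iii), and $\Omega^{-j}R\in\mathcal{F}_j\subseteq\mathcal{F}_{j+1}$ by monotonicity of $\mathfrak{S}$. Lemma~\ref{L27} then translates the two conditions into $\ext_R^k(R/I,\Omega M)=0$ for all $k=0,\ldots,n-1$ and $I\in\mathcal{G}_k^f(\mathfrak{S})$, versus $\ext_R^k(R/I,M)=0$ for $k=0,\ldots,n-2$ and $I\in\mathcal{G}_{k+1}^f(\mathfrak{S})$.

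The main obstacle is justifying the shift $\ext_R^k(R/I,\Omega M)\cong\ext_R^{k-1}(R/I,M)$ for $I\in\mathcal{G}_k^f(\mathfrak{S})$ and $1\le k\le n-1$, which requires $\ext_R^j(R/I,P_0)=0$ for $j\le k$. Because $P_0\cong R^{(X)}$ for an arbitrary set $X$, this does not reduce immediately to ext-vanishing on $R$; my plan is to route through Koszul cohomology. Using the monotonicity $\mathcal{F}_j\subseteq\mathcal{F}_i$ together with $\Omega^{-j}R\in\mathcal{F}_j$ and Proposition~\ref{P:grade-vass}, one gets $\ext_R^j(R/I,R)=0$ for $j\le i$ and $I\in\mathcal{G}_i^f(\mathfrak{S})$. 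An inductive application of Proposition~\ref{P:Koszul-ext} identifies these ext-groups with $H^j(I;R)$, and since Koszul cohomology comes from a bounded complex of finitely generated free modules it commutes with arbitrary direct sums. Hence $H^j(I;R^{(X)})=H^j(I;R)^{(X)}=0$ for $j\le i$, and a second induction via Proposition~\ref{P:Koszul-ext}, now applied to $R^{(X)}$, yields $\ext_R^j(R/I,R^{(X)})=0$ for $j\le i$.

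The $k=0$ part of the condition for $\Omega M\in\mathcal{C}(\mathfrak{S})$ is then automatic, since $\Omega M\subseteq P_0\in\mathcal{F}_0$ by closure of $\mathcal{F}_0$ under products and submodules. For $1\le k\le n-1$ the shift converts $\ext_R^k(R/I,\Omega M)=0$ into $\ext_R^{k-1}(R/I,M)=0$, and reindexing $k\mapsto k+1$ matches the condition for $M\in\mathcal{C}((\mathcal{F}_1,\ldots,\mathcal{F}_{n-1}))$, completing (i). For (ii), the identity $\mathcal{C}_{(i)}=\{M\in\ModR:\Omega^{i}M\in\mathcal{C}\}$ is a formal consequence of two standard dimension shifts (via the injective resolution of $C$ and the projective resolution of $M$, both using that $\ext_R^{\ge 1}$ vanishes against injectives or from projectives). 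An induction on $i$, applying (i) to the characteristic sequence $(\mathcal{F}_{i-1},\ldots,\mathcal{F}_{n-1})$ (which is characteristic by the same argument as above), then gives $\mathcal{C}_{(i)}=\mathcal{C}((\mathcal{F}_i,\ldots,\mathcal{F}_{n-1}))$. The boundary cases $i=0$ (definition) and $i=n$ (both sides equal $\ModR$, using $\id C\le n$) are immediate.
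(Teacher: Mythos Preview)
Your proof is correct and follows the same route as the paper: reformulate both sides via Lemma~\ref{L27}, dimension-shift through a projective presentation $0\to\Omega M\to P\to M\to 0$, and then induct for (ii) using $\mathcal{C}_{(i)}=\{M:\Omega^i M\in\mathcal{C}\}$. The one difference is that the paper shortcuts your Koszul-cohomology detour for $\ext_R^j(R/I,P)=0$: since Lemma~\ref{L28} has already established that $\mathcal{C}(\mathfrak{S})$ is a cotilting class, it contains all projectives, so $P\in\mathcal{C}(\mathfrak{S})$ and Lemma~\ref{L27} gives the required vanishing directly.
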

\begin{proof}
	\begin{enumerate}
			\item[(i)] Choose $I \in \mathcal{G}_i^f(\mathfrak{S})$ for some $i=0,1,\ldots,n-1$. Consider the long exact sequence obtained by applying $\Hom_R(R/I,-)$ onto a projective presentation of $M$, say
					$$0 \rightarrow \Omega M \rightarrow P \rightarrow M \rightarrow 0.$$
					Since cotilting classes contain all projectives modules, this long exact sequence yields $\Hom_R(R/I,\Omega M)=0$, and $\ext_R^j(R/I, \Omega M) \simeq \ext_R^{j-1}(R/I, M)$ for all $j=1,2,\ldots,i$. Therefore, 
					$$\Omega M \in \bigcap_{i=0}^{n-1}\bigcap_{I \in \mathcal{G}_i^{f}(\mathfrak{S})} \Ker \ext{}_R^i(R/I,-)$$ 
					if and only if 
					$$M \in \bigcap_{i=1}^{n-1}\bigcap_{I \in \mathcal{G}_i^{f}(\mathfrak{S})} \Ker \ext{}_R^{i-1}(R/I,-).$$ 
					This concludes $(i)$ by Lemma~\ref{L27}.

			\item[(ii)] It follows directly from the definition that $\mathcal{C}_{(i)}=\{M \in \ModR \mid \Omega^i M \in \mathcal{C}\}$. Then $(ii)$ is proved by $(i)$ and a straightforward induction.
	\end{enumerate}
\end{proof}
\begin{lem}
		\label{L:psiinjective}
		Let $\mathfrak{S}$ and $\mathfrak{S}'$ be two characteristic sequences. If $\mathfrak{S} \neq \mathfrak{S}'$, then $\mathcal{C}(\mathfrak{S}) \neq \mathcal{C}(\mathfrak{S}')$. In particular, map $\Psi$ is injective.
\end{lem}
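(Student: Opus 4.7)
The plan is to show that each component $\mathcal{F}_i$ of a characteristic sequence can be uniquely recovered from the cotilting class $\mathcal{C}(\mathfrak{S})$ it produces. The first observation is that the lower cotilting classes are intrinsic to $\mathcal{C}(\mathfrak{S})$: by definition, $\mathcal{C}_{(i)}(\mathfrak{S}) = \{M \in \ModR \mid \Omega^i M \in \mathcal{C}(\mathfrak{S})\}$, so if $\mathcal{C}(\mathfrak{S}) = \mathcal{C}(\mathfrak{S}')$ then $\mathcal{C}_{(i)}(\mathfrak{S}) = \mathcal{C}_{(i)}(\mathfrak{S}')$ for every $i$. Combining this with Lemma~\ref{L31}(ii), which identifies $\mathcal{C}_{(i)}(\mathfrak{S})$ with $\mathcal{C}((\mathcal{F}_i,\mathcal{F}_{i+1},\ldots,\mathcal{F}_{n-1}))$, the problem reduces to showing that the initial entry $\mathcal{F}_0$ of a characteristic sequence is uniquely determined by the cotilting class it produces.

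For this first-entry recovery, I would invoke the argument already present in the proof of Lemma~\ref{L01}. Since by Lemma~\ref{L28} the class $\mathcal{C}(\mathfrak{S})$ is of cofinite type, Proposition~\ref{P01} ensures that $\mathcal{C}(\mathfrak{S})$ is closed under injective envelopes. The proof of Lemma~\ref{L01} then shows that $\mathcal{F}_0$ coincides with the closure of $\mathcal{C}(\mathfrak{S})$ under submodules: one inclusion is immediate since $\mathcal{F}_0$ is hereditary and contains $\mathcal{C}(\mathfrak{S})$, while the other inclusion uses that the injective objects of $\mathcal{F}_0$ are the same as those of $\mathcal{C}(\mathfrak{S})$ (by closure under injective envelopes), and every object of $\mathcal{F}_0$ embeds into such an injective envelope. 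This produces a formula for $\mathcal{F}_0$ purely in terms of $\mathcal{C}(\mathfrak{S})$.

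Assembling the pieces: if $\mathcal{C}(\mathfrak{S}) = \mathcal{C}(\mathfrak{S}')$, then $\mathcal{C}_{(i)}(\mathfrak{S}) = \mathcal{C}_{(i)}(\mathfrak{S}')$ for each $i = 0,1,\ldots,n-1$. Each of these is again an $(n-i)$-cotilting class of cofinite type (Lemma~\ref{L28} applied to the tail sequence, together with Lemma~\ref{L31}(ii)), whose associated characteristic sequence begins with $\mathcal{F}_i(\mathfrak{S})$ in one case and with $\mathcal{F}_i(\mathfrak{S}')$ in the other. The first-entry recovery then forces $\mathcal{F}_i(\mathfrak{S}) = \mathcal{F}_i(\mathfrak{S}')$ for all $i$, whence $\mathfrak{S} = \mathfrak{S}'$.

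The only step requiring genuine input is the first-entry recovery, and that is already implicit in the proof of Lemma~\ref{L01}; the rest is a straightforward reduction using the shift operation $\mathcal{C} \leadsto \mathcal{C}_{(1)}$. Hence I expect no serious obstacle.
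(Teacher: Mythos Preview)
Your proposal is correct and follows essentially the same route as the paper. Both arguments reduce to the case $i=0$ via the shift $\mathcal{C} \leadsto \mathcal{C}_{(i)}$ using Lemma~\ref{L31}(ii), and for $i=0$ both rely on the fact that an injective module lies in $\mathcal{C}(\mathfrak{S})$ if and only if it lies in $\mathcal{F}_0(\mathfrak{S})$; the paper phrases this as exhibiting a witness $E(M)$, while you phrase it as recovering $\mathcal{F}_0$ as the submodule closure of $\mathcal{C}(\mathfrak{S})$, but the content is identical.
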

\begin{proof}
		Let $i \geq 0$ be smallest such that $\mathcal{F}_i(\mathfrak{S}) \neq \mathcal{F}_i(\mathfrak{S}')$. Suppose without loss of generality that there is $M \in \mathcal{F}'_i(\mathfrak{S}) \setminus \mathcal{F}_i(\mathfrak{S})$. If $i=0$, then clearly $E(M)$ is in $\mathcal{C}(\mathfrak{S}')$, but $E(M) \not\in \mathcal{F}_0(\mathfrak{S})$ proving the statement for $i=0$. Let now $i=0,1,\ldots,n-1$ and suppose towards contradiction that $\mathcal{C}(\mathfrak{S})=\mathcal{C}(\mathfrak{S}')$. Then also $\mathcal{C}(\mathfrak{S})_{(i)}=\mathcal{C}(\mathfrak{S}')_{(i)}$, but this is a contradiction using the case $i=0$ and Lemma~\ref{L31}(ii).
\end{proof}
\subsection{The result}
Now we are ready to prove our classification theorem.
\begin{proof}[Proof of Theorem~\ref{T01}]
		The assignment $\Psi: \mathfrak{S} \mapsto \mathcal{S}(\mathfrak{S}))$ is a well-defined map from the set of all characteristic sequences of length $n$ to $n$-cotilting classes of cofinite type by Lemma~\ref{L28}. This map is injective by Lemma~\ref{L:psiinjective} and surjective by Lemma~\ref{L02}.
\end{proof}
\section{Main classification results}
\label{sec:main-results}
In this section we rephrase Theorem~\ref{T01} in terms of Thomason sets, and state our characterization of tilting classes over commutative rings.
\begin{thm}
	\label{T:mainthm1}
		Let $R$ be a commutative ring and $n\geq 0$. There is a 1-1 correspondence between $n$-cotilting classes $\mathcal{C}$ of cofinite type, and finite sequences $(X_0,X_1,\ldots,X_{n-1})$ of Thomason subsets of $\spec(R)$ satisfying:
		\begin{enumerate}
			\item[(i)] $X_0 \supseteq X_1 \supseteq \cdots \supseteq X_{n-1}$,
			\item[(ii)] $X_i \cap \vass(\Omega^{-j} R) = \emptyset$ for all $j=0,1,\ldots,i$ and all $i=0,1,\ldots,n-1$.
		\end{enumerate}
		The correspondence is given by mutually inverse maps
		$$\mathcal{C} \mapsto (\spec(R) \setminus \ass(\mathcal{C}_{(0)}),\spec(R) \setminus \ass(\mathcal{C}_{(1)}),\ldots,\spec(R) \setminus \ass(\mathcal{C}_{(n-1)})),$$
		$$(X_0,X_1,\ldots,X_{n-1}) \mapsto \{M \in \ModR \mid \vass(\Omega^{-i} M) \cap X_i = \emptyset \text{ for all $i=0,1,\ldots,n-1$}\}.$$
\end{thm}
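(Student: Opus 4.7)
The plan is to deduce Theorem~\ref{T:mainthm1} from Theorem~\ref{T01} by applying the Thomason parametrization of hereditary torsion pairs of finite type established in Section~\ref{sec:torsion-comm}. Concretely, by Proposition~\ref{P:torsion-Thomason}, each hereditary torsion-free class $\mathcal{F}_i$ of finite type corresponds bijectively to a Thomason set $X_i \subseteq \spec R$, with the torsion class on the other side; and by Proposition~\ref{P:torsion-free-Thomason}, this torsion-free class is explicitly
\[ \mathcal{F}_i = \{ F \in \ModR \mid \vass(F) \cap X_i = \emptyset \}. \]
So the first step is simply to define the map at the level of characteristic sequences: send $\mathfrak{S} = (\mathcal{F}_0, \ldots, \mathcal{F}_{n-1})$ to $(X_0, X_1, \ldots, X_{n-1})$ where $X_i$ is the Thomason set of $\mathcal{F}_i$.

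Next I would translate the three defining conditions of a characteristic sequence into the required conditions on the Thomason sets. The ascending chain $\mathcal{F}_0 \subseteq \cdots \subseteq \mathcal{F}_{n-1}$ corresponds by Proposition~\ref{P:torsion-Thomason} to a descending chain of the associated torsion classes, hence to $X_0 \supseteq X_1 \supseteq \cdots \supseteq X_{n-1}$, which is condition~(i). The condition $\Omega^{-i}R \in \mathcal{F}_i$, combined with the chain, is equivalent to $\Omega^{-j}R \in \mathcal{F}_i$ for every $j \leq i$, and by Proposition~\ref{P:torsion-free-Thomason} this is in turn equivalent to $\vass(\Omega^{-j}R) \cap X_i = \emptyset$ for $j = 0,1,\ldots,i$, yielding condition~(ii). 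Conversely, given such a sequence $(X_0, \ldots, X_{n-1})$, define $\mathcal{F}_i$ by the displayed formula above; the two sets of conditions are by construction equivalent, and the resulting $\mathfrak{S}$ is a characteristic sequence.

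For the explicit description of the cotilting class attached to the Thomason data, I would combine the definition
\[ \mathcal{C}(\mathfrak{S}) = \{ M \in \ModR \mid \Omega^{-i} M \in \mathcal{F}_i(\mathfrak{S}) \text{ for each } i \} \]
with the characterization $\mathcal{F}_i = \{F \mid \vass(F) \cap X_i = \emptyset\}$ from Proposition~\ref{P:torsion-free-Thomason} to obtain directly the formula in the theorem. For the inverse direction, recall that by Proposition~\ref{P01}, $\mathcal{C}_{(i)}$ is closed under injective envelopes, and its closure under submodules is exactly $\mathcal{F}_i$, the $i$-th term of the characteristic sequence associated to $\mathcal{C}$ (as extracted in Lemma~\ref{L02} and Lemma~\ref{L31}(ii)). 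Thus the Thomason set $X_i$ attached to $\mathcal{F}_i$ is the complement in $\spec R$ of $\bigcup_{F \in \mathcal{C}_{(i)}} \vass(F)$, which is what the notation $\spec(R)\setminus\ass(\mathcal{C}_{(i)})$ denotes. The bijectivity is then inherited from Theorem~\ref{T01} combined with Proposition~\ref{P:torsion-Thomason}.

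The steps are largely formal translations and I do not expect a serious obstacle; the only point requiring mild care is the equivalence between condition~(iii) of characteristic sequences (a single membership $\Omega^{-i}R \in \mathcal{F}_i$) and the apparently stronger condition~(ii) of the theorem (running over all $j \leq i$), which is resolved precisely by exploiting the chain condition~(i) to propagate $\Omega^{-j}R \in \mathcal{F}_j \subseteq \mathcal{F}_i$.
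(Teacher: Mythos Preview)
Your proposal is correct and follows essentially the same approach as the paper: deduce the result from Theorem~\ref{T01} via the Thomason parametrization of hereditary torsion pairs of finite type (Propositions~\ref{P:torsion-Thomason} and~\ref{P:torsion-free-Thomason}), and identify $\ass(\mathcal{C}_{(i)})$ with $\ass(\mathcal{F}_i)$ by observing that $\mathcal{C}_{(i)}$ and $\mathcal{F}_i$ share the same injective modules (equivalently, as you phrase it, that $\mathcal{F}_i$ is the closure of $\mathcal{C}_{(i)}$ under submodules). Your explicit remark about why the single membership condition $\Omega^{-i}R\in\mathcal{F}_i$ is equivalent to the ostensibly stronger condition~(ii) via the chain condition is a nice clarification that the paper leaves implicit.
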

\begin{proof}
		Start with a cofinite-type cotilting class $\mathcal{C}$, and let $\mathfrak{S}=(\mathcal{F}_0,\mathcal{F}_1,\ldots,\mathcal{F}_{n-1})$ be the characteristic sequence corresponding to $\mathcal{C}$ in the sense of Theorem~\ref{T01}. Note that $\ass(\mathcal{C}_{(j)}) = \ass(\mathcal{F}_j)$ for each $j=0,1,\ldots,n-1$. Indeed, one inclusion follows trivially, as $\mathcal{C}_{(j)} \subseteq \mathcal{F}_j$ by Lemma~\ref{L31}, while the second inclusion follows from $\mathcal{C}_{(j)}$ and $\mathcal{F}_j$ being both closed under injective envelopes and having the same injectives. 		
		
		The rest of the proof is a combination of Theorem~\ref{T01}, Proposition~\ref{P:torsion-Thomason}, and Proposition~\ref{P:torsion-free-Thomason}.
\end{proof}
\begin{thm}
		\label{T:mainthm2}
	Let $R$ be a commutative ring and $n \geq 0$. There are 1-1 correspondences between the following collections:
	\begin{enumerate}
			\item[(i)] sequences $(\mathcal{G}_0,\mathcal{G}_1,\ldots,\mathcal{G}_{n-1})$ of Gabriel topologies of finite type satisfying:
					\begin{enumerate}
							\item $\mathcal{G}_0 \supseteq \mathcal{G}_1 \supseteq \cdots \supseteq \mathcal{G}_{n-1}$,
							\item $\ext_R^j(R/I,R)=0$ for all $I \in \mathcal{G}_i$, all $i=0,1,\ldots,n-1$, and all $j=0,1,\ldots,i$.
					\end{enumerate}
			\item[(ii)] $n$-cotilting classes $\mathcal{C}$ in $\ModR$ of cofinite type,
			\item[(iii)] $n$-tilting classes $\mathcal{T}$ in $\ModR$,
			\item[(iv)] resolving subcategories $\mathcal{S}$ of $\modR$ consisting of modules of projective dimension $\leq n$.
	\end{enumerate}
	The correspondences are given as follows:
	$$(i) \rightarrow (ii): (\mathcal{G}_0,\mathcal{G}_1,\ldots,\mathcal{G}_{n-1}) \mapsto \bigcap_{i=0}^{n-1} \bigcap_{I \in \mathcal{G}_i^f} \Ker \ext{}_R^i(R/I,-) = \bigcap_{i=0}^{n-1} \bigcap_{I \in \mathcal{G}_i^f} (S_{I,i+1})^\intercal$$
	$$(i) \rightarrow (iii): (\mathcal{G}_0,\mathcal{G}_1,\ldots,\mathcal{G}_{n-1}) \mapsto \bigcap_{i=0}^{n-1} \bigcap_{I \in \mathcal{G}_i^f} \Ker \tor{}_i^R(R/I,-) = \bigcap_{i=0}^{n-1} \bigcap_{I \in \mathcal{G}_i^f} (S_{I,i+1})^\perp$$
	$$(i) \rightarrow (iv): (\mathcal{G}_0,\mathcal{G}_1,\ldots,\mathcal{G}_{n-1}) \mapsto \{M \in \modR \mid \text{$M$ is isomorphic}$$ $$\text{to a direct summand of a finitely $\{R\} \cup \{S_{I,i+1} \mid I \in \mathcal{G}_{i}^f, i<n\}$-filtered module}\}$$
\end{thm}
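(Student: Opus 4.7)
The plan is to chain three bijections: (ii)$\leftrightarrow$(iii) is immediate from Theorem~\ref{T:cofinitetype}, (iii)$\leftrightarrow$(iv) from Theorem~\ref{T:finitetype}, while the substantive new ingredient is the correspondence (i)$\leftrightarrow$(ii), obtained by rephrasing Theorem~\ref{T01} in the language of Gabriel topologies.

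To construct (i)$\leftrightarrow$(ii), first apply Proposition~\ref{P:torsion} pointwise to translate a sequence $(\mathcal{G}_0,\ldots,\mathcal{G}_{n-1})$ of Gabriel topologies of finite type into a sequence $(\mathcal{F}_0,\ldots,\mathcal{F}_{n-1})$ of hereditary torsion-free classes of finite type. The antitone nature of this dictionary turns condition~(a) into the characteristic sequence chain $\mathcal{F}_0 \subseteq \cdots \subseteq \mathcal{F}_{n-1}$. What remains is to verify that condition~(b) is equivalent to the third clause in the definition of a characteristic sequence, namely $\Omega^{-i} R \in \mathcal{F}_i$ for every $i$. By Proposition~\ref{P:torsion-free-Thomason}, the latter says $\vass(\Omega^{-i}R) \cap V(I) = \emptyset$ for all $I \in \mathcal{G}_i^f$; by Proposition~\ref{P:grade-vass} applied to $M=R$, this is equivalent to $\ext_R^j(R/I,R)=0$ for $j=0,\ldots,i$ and all $I \in \mathcal{G}_i^f$. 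The jump from $\mathcal{G}_i^f$ to all of $\mathcal{G}_i$ is handled by Lemma~\ref{L:vanish}: every $I \in \mathcal{G}_i$ contains some $J \in \mathcal{G}_i^f$ by the finite-type assumption, so $R/I$ is an $R/J$-module and inherits the vanishing of $\ext^j$. Composing this translation with Theorem~\ref{T01} yields (i)$\leftrightarrow$(ii).

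The explicit formulas then fall out. The identity $\bigcap_i \bigcap_{I \in \mathcal{G}_i^f} \Ker \ext_R^i(R/I,-) = \bigcap_i \bigcap_{I \in \mathcal{G}_i^f} (S_{I,i+1})^\intercal$ for (i)$\to$(ii) was already extracted in Lemma~\ref{L28}. For (i)$\to$(iii), since the cotilting class has the cofinite-type presentation $\mathcal{S}^\intercal$ with $\mathcal{S} = \{S_{I,i+1} \mid I \in \mathcal{G}_i^f,\, i<n\}$, Theorem~\ref{T:cofinitetype} supplies the associated tilting class as $\mathcal{S}^\perp = \bigcap_i \bigcap_I (S_{I,i+1})^\perp$; the reformulation $\bigcap_i \bigcap_I \Ker \tor_i^R(R/I,-)$ follows from Lemma~\ref{L78} combined with the natural isomorphism $\ext_R^i(R/I,M^+) \simeq \tor_i^R(R/I,M)^+$. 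Finally, (i)$\to$(iv) is obtained by combining Theorem~\ref{T:finitetype} with Theorem~\ref{cotorsionpairs}(ii) applied to the generating set $\{R\} \cup \{S_{I,i+1}\}$. The main residual technicality is promoting a strongly finitely presented summand of a transfinitely $\{R\} \cup \{S_{I,i+1}\}$-filtered module to a summand of a \emph{finitely} filtered one, which is the standard finite-presentation reduction underlying Theorem~\ref{T:finitetype}.
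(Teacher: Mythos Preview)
Your proposal is correct and follows essentially the same route as the paper. Both arguments chain Theorem~\ref{T01} with the Gabriel correspondence for (i)$\leftrightarrow$(ii), invoke Lemma~\ref{L27}/Lemma~\ref{L28}/Proposition~\ref{P14} for the explicit cotilting description, pass to the tilting side via the elementary duality of Lemma~\ref{L78}, and handle (iv) through Theorem~\ref{T:finitetype} together with Theorem~\ref{cotorsionpairs}(ii).

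Two small differences worth noting. First, you are more explicit than the paper in matching condition~(b) to the characteristic-sequence axiom $\Omega^{-i}R \in \mathcal{F}_i$: you invoke Proposition~\ref{P:grade-vass} (together with the chain condition $\mathcal{G}_i \subseteq \mathcal{G}_j$ for $j \le i$, which you should mention since it is what supplies the vanishing in degrees $j<i$), whereas the paper simply points to the Gabriel correspondence. Second, the paper is a bit more careful than you when identifying the tilting class with $\bigcap (S_{I,i+1})^\perp$: it argues via pure-injectives and definability that this class coincides with the dual-definable partner of $\bigcap (S_{I,i+1})^\intercal$, while you assert directly that Theorem~\ref{T:cofinitetype} hands you $\mathcal{S}^\perp$. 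That assertion is true, but the cleanest justification is to note that $\mathcal{S}^{\perp_1}=\mathcal{S}^\perp$ is already an $n$-tilting class (each $S_{I,i+1}$ lies in $\modR$ with projective dimension $\le n$, so Theorem~\ref{T:finitetype} applies), and then Lemma~\ref{L78} identifies its associated cotilting class as $\mathcal{S}^\intercal$.
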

\begin{proof}
		Correspondence $(i) \rightarrow (ii)$ follows directly from Theorem~\ref{T01}, Gabriel correspondence between hereditary torsion pairs and Gabriel topologies (see \cite[VI, Theorem 5.1 and XIII, Proposition 1.2]{St}), and combination of Lemma~\ref{L27} and Proposition~\ref{P14}. We show that $(i) \rightarrow (iii)$ is a composition of $(i) \rightarrow (ii)$ with the character duality correspondence between tilting and cofinite-type cotilting classes. Indeed, we have as in the proof of Lemma~\ref{L78} that $\tor_i^R(X,M)=0$ if and only if $\ext_R^i(X,M^+)=0$ for any $i \geq 0$. Therefore, $M \in \bigcap_{i=0}^{n-1} \bigcap_{I \in \mathcal{G}_i^f} \Ker\tor{}_i^R(R/I,-)$ if and only if $M^+ \in \bigcap_{i=0}^{n-1} \bigcap_{I \in \mathcal{G}_i^f} \Ker\ext{}_R^i(R/I,-)$, and thus the former class is the tilting class associated to the latter cotilting class by Lemma~\ref{L78}(i). Similarly, $N \in \bigcap_{i=0}^{n-1} \bigcap_{I \in \mathcal{G}_i^f} (S_{I,i+1})^\intercal$ if and only if $N^+ \in \bigcap_{i=0}^{n-1} \bigcap_{I \in \mathcal{G}_i^f} (S_{I,i+1})^\perp$. As the latter class is of finite type, and thus definable, it is uniquely determined by its pure-injective objects (\cite[Lemma 6.9]{GT}). Using Lemma~\ref{L78}(ii) and \cite[Theorem 2.27(c)]{GT}, we see that pure-injectives of this class coincide with pure-injectives of the tilting class associated to the cotilting class $\bigcap_{i=0}^{n-1} \bigcap_{I \in \mathcal{G}_i^f} (S_{I,i+1})^\intercal$, and thus the two classes coincide.
		
		Finally, by Theorem~\ref{T:finitetype}, resolving subcategories $\mathcal{S}$ as in $(iv)$ correspond bijectively to tilting classes $\mathcal{T}$ via the assignment $\mathcal{T} \mapsto ({}^\perp \mathcal{T}) \cap \modR$. Whenever $\mathcal{T}=\mathcal{S}^\perp$ for some set $\mathcal{S} \subseteq \modR$, we have that ${}^\perp \mathcal{T}$ equals to all direct summands of all $\{R\} \cup \mathcal{S}$-filtered modules by Theorem~\ref{cotorsionpairs}, and thus ${}^\perp \mathcal{T} \cap \modR$ equals to direct summands of all finitely $\{R\} \cup \mathcal{S}$-filtered modules. By $(i) \rightarrow (iii)$, we can chose $\mathcal{S}=\{S_{I,i+1} \mid I \in \mathcal{G}_i^f, i=0,1,\ldots,n-1\}$, establishing $(i) \rightarrow (iv)$.
\end{proof}
\begin{rem}
		If the ring $R$ is coherent, we can use a projective resolution of $R/I$ in $\modR$ instead of the Koszul complex. Therefore, in this case $S_{I,i+1}$ can be replaced by module $\tr(\Omega^i R/I)$ for each $i=0,1,\ldots,n-1$, where $\tr$ is the Auslander-Bridger transpose (cf. \cite{APST}).
\end{rem}
\begin{ex}
		\label{ex:perfect}
		Let $R$ be a commutative perfect ring. Then the only tilting class in $\ModR$ is the trivial one, that is the whole $\ModR$. By Theorem~\ref{T:mainthm2}, it is enough to show that if $I$ is a finitely generated ideal such that $\Hom_R(R/I,R)=0$, then $I=R$. Indeed, since $I$ is finitely generated, the descending chain $(I^n \mid n \in \omega)$ stabilizes at some $n$ (see \cite[Theorem 23.20, p. 345]{LAM}). Then either $I$ is nilpotent, and thus $\Hom_R(R/I,R) \neq 0$ unless $I=R$, or $I^n$ is idempotent. As $I$ is finitely generated, the latter case implies that $I^n$ is a direct summand of $R$, and thus again $\Hom_R(R/I,R) \neq 0$ unless $I=R$.
\end{ex}
\section{Derived functors of torsion and completion and \v{C}ech (co)homology}
\label{sec:derived}
	In Section~\ref{sec:main-results}, we characterized cofinite type $n$-cotilting classes over a commutative ring as classes of all modules which vanish in certain degrees of Koszul cohomologies, arising from a set of finitely generated ideals. In this section, we show that we can replace Koszul complexes by two kinds of more canonically defined cohomology theories associated to an ideal---\v{C}ech cohomology, and local cohomology. Our results are valid for a general commutative ring, even though the latter two cohomology theories do not in general coincide for non-noetherian rings. An analogous description of $n$-tilting classes via \v{C}ech homology and local homology will also be accomplished, after dealing with a few extra technical difficulties. The main sources we use in this section are \cite{GM,Pos,PSY,Schenz}.

	Throughout this section, let $R$ be a commutative ring.
\subsection{Local (co)homology}
\label{subsec:loccohom}
	Given a finitely generated ideal $I$, there are two additive functors $\ModR \rightarrow \ModR$ arising from it---the $I$-torsion functor $\Gamma_I$ and the $I$-adic completion functor $\Lambda_I$, defined for an $R$-module $M$ as follows:
	$$\Gamma_I(M) = \{m \in M \mid I^nm = 0 \text{ for some $n>0$}\} = \varinjlim_{n \in \omega} \Hom_R(R/I^n,M),$$
	$$\Lambda_I(M) = \varprojlim_{n \in \omega} M \otimes_R R/I^n.$$
	A module $M$ is said to be $I$-torsion if $\Gamma_I(M)=M$, and we denote the full subcategory of all $I$-torsion modules by $\mathcal{T}_I$. Then $\mathcal{T}_I$ is an abelian category with exact direct sums, and the embedding $\mathcal{T}_I \subseteq \ModR$ is exact and clearly admits $\Gamma_I$ as is its right adjoint.
	\begin{equation}
		\label{E:torsionadj}
	\begin{tikzcd}
			\mathcal{T}_I \arrow[bend left]{r}[above]{\subseteq} & \ModR \arrow[bend left]{l}[below]{\Gamma_I}
	\end{tikzcd}
	\end{equation}
	In particular, $\Gamma_I$ is left exact, and we can form the right derived functor $\mathbb{R}\Gamma_I$, called the \emph{local cohomology} functor of $I$. 

	The situation is a bit more tricky in the case of completion functors. Following Positselski \cite{Pos}, we say that a module $M$ is an $I$-contramodule provided that $\ext_R^j(R[x_i^{-1}],M)=0$ for $j=0,1$, and for $i=1,2,\ldots,n$, where $\{x_1,x_2,\ldots,x_n\}$ is a set of generators of $I$. By \cite[p.~3880]{Pos}, the choice of generators does not matter, and this is a correct definition. Denote by $\mathcal{C}_I$ the full subcategory of all $I$-contramodules. Then $\mathcal{C}_I$ is an abelian category with exact products and the embedding $\mathcal{C}_I \subseteq \ModR$ is exact and admits a left adjoint (\cite[Proposition 2.1]{Pos}). Following \cite{Pos} we denote the left adjoint by $\Delta_I$.
	\begin{equation}
		\label{E:contraadj}
	\begin{tikzcd}
			\mathcal{C}_I \arrow[bend right]{r}[below]{\subseteq} & \ModR \arrow[bend right]{l}[above]{\Delta_I}
	\end{tikzcd}
	\end{equation}
	However, usually $\Lambda_I$ does not fit in this adjunction in place of $\Delta_I$. Indeed, $\Lambda_I$ can be neither left nor right exact, even over a noetherian ring. Nevertheless, we can compute the left derived functor $\mathbb{L}\Lambda_I$, and call it the \emph{local homology}. 
	
	By \cite[\S I. Lemma 5.13]{LMS}, both adjunctions \ref{E:torsionadj} and \ref{E:contraadj} survive passing to the (unbounded) derived category, and thus we have adjoint pairs:
	\begin{equation}
		\label{E:der-torsionadj}
	\begin{tikzcd}
			\Der(\mathcal{T}_I) \arrow[bend left]{r}[above]{} & \Der(\ModR) \arrow[bend left]{l}[below]{\mathbb{R}\Gamma_I}
	\end{tikzcd}
	\end{equation}
	and
	\begin{equation}
		\label{E:der-contraadj}
	\begin{tikzcd}
			\Der(\mathcal{C}_I) \arrow[bend right]{r}[below]{} & \Der(\ModR) \arrow[bend right]{l}[above]{\mathbb{L}\Delta_I}.
	\end{tikzcd}
	\end{equation}

	However, note that in general not even in the derived picture we can swap $\mathbb{L}\Delta_I$ for $\mathbb{L}\Lambda_I$ (see \cite[Example 2.6]{Pos}). This will be further discussed in~\S\ref{subsec:picture}.

\subsection{\v{C}ech (co)homology}
	As discussed in \S\ref{subsec:Koszul}, the Koszul complex $K_\bullet(I)$ for a finitely generated ideal $I$ is not a well-defined object even in the derived category, as the homology can change when passing from one generating system of $I$ to another. This can be mended by stepping outside of the realm of perfect complexes and using generators of $I$ to form a \v{C}ech cochain complex (also called a \emph{stable Koszul complex}).
	\begin{definition}
			Let $x$ be an element of $R$. The \emph{\v{C}ech complex} with respect to $x$, denoted by $\check{C}^\bullet(x)$, which is defined as
			$$0 \rightarrow R \xrightarrow{\iota} R_x \rightarrow 0,$$
			where $R_x = R[x^{-1}]$, $\iota$ is the natural morphism, and the cochain complex is concentrated in (cohomological) degrees 0 and 1. Given a sequence $\mathbf{x}=(x_1,x_2,\ldots,x_n)$ of elements of $R$, we define $\check{C}^\bullet(\textbf{x})$ as the tensor product $\bigotimes_{i=1}^n \check{C}^\bullet(x_i)$.
	\end{definition}
	\begin{lem}\emph{(\cite[Corollary 3.12]{Gfbs})}\label{lem:quasi-iso-Cech}
			Let $I$ be a finitely generated ideal, and $\mathbf{x}=(x_1,x_2,\dots,x_n), \mathbf{y}=(y_1,y_2,\ldots,y_m)$ two sequences of generators of $I$. Then the \v{C}ech complexes $\check{C}^\bullet(\mathbf{x})$ and $\check{C}^\bullet(\mathbf{y})$ are quasi-isomorphic.
	\end{lem}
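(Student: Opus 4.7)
The plan is to reduce the claim to a single appending operation: I will show that for any finite sequence $\mathbf{x}=(x_1,\dots,x_n)$ and any $y \in (\mathbf{x})$, the canonical map of \v{C}ech complexes $\check{C}^\bullet(\mathbf{x}) \to \check{C}^\bullet(\mathbf{x},y)$ induced by the unit morphism $R \to \check{C}^\bullet(y)$ in cohomological degree $0$ is a quasi-isomorphism. Applying this finitely many times, one can append the entries of $\mathbf{y}$ to $\mathbf{x}$ one by one (each lies in $I=(\mathbf{x})$) to obtain a quasi-isomorphism $\check{C}^\bullet(\mathbf{x}) \simeq \check{C}^\bullet(\mathbf{x},\mathbf{y})$. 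Symmetrically $\check{C}^\bullet(\mathbf{y}) \simeq \check{C}^\bullet(\mathbf{x},\mathbf{y})$, and the two target complexes agree up to reordering the tensor factors, which gives the desired quasi-isomorphism.

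To prove the appending lemma, I would use the short exact sequence of cochain complexes
\begin{equation*}
0 \to R \to \check{C}^\bullet(y) \to R_y[-1] \to 0,
\end{equation*}
in which $R$ sits in degree $0$ and $R_y[-1]$ is the complex with $R_y$ in degree $1$. Each term of $\check{C}^\bullet(\mathbf{x})$ is a localisation of $R$, hence flat, so tensoring over $R$ with $\check{C}^\bullet(\mathbf{x})$ preserves exactness and yields
\begin{equation*}
0 \to \check{C}^\bullet(\mathbf{x}) \to \check{C}^\bullet(\mathbf{x},y) \to \bigl(\check{C}^\bullet(\mathbf{x}) \otimes_R R_y\bigr)[-1] \to 0.
\end{equation*}
The associated long exact cohomology sequence shows that the first map is a quasi-isomorphism provided the quotient complex $\check{C}^\bullet(\mathbf{x}) \otimes_R R_y$ is acyclic. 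This last complex is exactly the \v{C}ech complex of the images of $x_1,\dots,x_n$ in $R_y$. Since $y \in (\mathbf{x})$ and $y$ is a unit in $R_y$, the ideal $(\mathbf{x})R_y$ is the whole ring $R_y$.

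Thus the problem reduces to the standard fact that, for any commutative ring $S$ and any finite sequence $\mathbf{s} = (s_1,\dots,s_n)$ generating the unit ideal, the complex $\check{C}^\bullet(\mathbf{s})$ is acyclic. I would prove this by choosing a relation $1 = \sum_i a_i s_i$ in $S$ and constructing an explicit contracting homotopy on $\check{C}^\bullet(\mathbf{s})$ (the classical partition-of-unity homotopy for \v{C}ech cohomology of a trivial cover), or alternatively by inducting on $n$, applying the same short exact sequence trick to split off the factor $\check{C}^\bullet(s_n)$ and invoking acyclicity over the localisation $S_{s_n}$. The main point requiring care is this final acyclicity statement, but it is a well-known and essentially formal fact; the rest of the argument is a purely diagrammatic reduction relying only on flatness of localisations and the tensor-product definition of the \v{C}ech complex.
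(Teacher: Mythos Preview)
Your argument is correct and follows the standard route to this fact. The paper itself does not give a proof of this lemma at all; it simply imports the statement as \cite[Corollary 3.12]{Gfbs}. So there is nothing to compare on the level of strategy: you have supplied a self-contained proof where the paper only cites one.

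A couple of remarks on execution. First, the reduction step---passing from $\check{C}^\bullet(\mathbf{x})$ to $\check{C}^\bullet(\mathbf{x},\mathbf{y})$ by appending one generator at a time, and then invoking symmetry plus the commutativity isomorphism of the tensor product---is clean and exactly right. Second, your identification $\check{C}^\bullet(\mathbf{x})\otimes_R R_y \cong \check{C}^\bullet_{R_y}(x_1,\dots,x_n)$ is the key observation, and it holds because localization commutes with localization. Third, the ``well-known'' acyclicity of the \v{C}ech complex for a sequence generating the unit ideal is indeed elementary, but since you flag it as the only point requiring care, it would strengthen the write-up to actually spell out one of the two arguments you mention (the partition-of-unity homotopy is the more transparent of the two and takes only a few lines). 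With that filled in, the proof is complete and entirely self-contained, which is an improvement over a bare citation.
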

	Lemma~\ref{lem:quasi-iso-Cech} legitimizes the following notation: Given a finitely generated ideal $I$ with a finite sequence of generators $\mathbf{x}$, we denote $\check{C}^\bullet(I) = \check{C}^\bullet(\mathbf{x})$. The cochain complex $\check{C}^\bullet(I)$ is then well-defined as an object of the derived category.

	Similar to Koszul complexes, we can compute \v{C}ech cohomology and homology. First we address the former, defined as follows:
	$$\check{H}^i(I;-)=H^{i}(\check{C}^\bullet(I) \otimes_R^\mathbb{L} -).$$ 
	As $\check{C}^\bullet(I)$ is a bounded complex of flat modules, we can drop the left derivation symbol $\mathbb{L}$ from the formula. This \emph{\v{C}ech cohomology} can also be viewed as a limit version of Koszul cohomology in the following way. Given $x \in R$, the Koszul chain complexes $(K_\bullet(x^j) \mid j>0)$ form an inverse system with the following connecting maps:
	\begin{equation}
		\begin{tikzcd}
				0 \arrow{r} & R \arrow{r}{\cdot x^j} & R\arrow{r}  & 0 \\
				0 \arrow{r} & R\arrow{u}{\cdot x} \arrow{r}{\cdot x^{j+1}} & R \arrow[-,double equal sign distance]{u}\arrow{r}& 0 
		\end{tikzcd}
	\end{equation}
	Dualizing this with respect to $R$, we obtain a direct system $(K_\bullet(x^j)^* \mid j>0)$ of cochain complexes, and it is easy to check that its limit is precisely $\check{C}^\bullet(x)$. As direct limit commutes with tensor product, we get $\check{C}^\bullet(I)=\varinjlim_{j>0} K_\bullet(\mathbf{x}^j)^*$, where we fix the notation
	$$\mathbf{x}^j=(x_1^j,x_2^j,\ldots,x_m^j)$$ 
	for a set of generators $\mathbf{x}=\mathbf{x}^1$ of $I$. Given a module $M$, we infer from exactness of the direct limit functor the following isomorphism
	\begin{equation}
			\label{E:ltensor}
			\check{H}^i(I;M) \simeq \varinjlim_{j>0} H^i(\mathbf{x}^j;M).
	\end{equation}
	Using this representation, we can already show that the \v{C}ech complexes classify the cofinite type cotilting classes (see also Theorem~\ref{T:mainthm3} below). In the following proofs, let always $I_j$ denote the ideal generated by the sequence $\mathbf{x}^j$ (not to be confused with $I^j$).
	\begin{lem}
\label{lem:Cech-cohom}
		Let $R$ be a commutative ring, $I$ a finitely generated ideal, and $n > 0$. Then 
			$$\bigcap_{i=0}^{n-1} \Ker H^i(I;-) = \bigcap_{i=0}^{n-1} \Ker \check{H}^{i}(I;-).$$
	\end{lem}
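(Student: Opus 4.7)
The plan is to establish the two inclusions separately: the forward inclusion is essentially formal from the results of Section~\ref{sec:grade}, while the reverse inclusion requires an induction on $n$ combined with a dimension-shift argument that identifies the \v{C}ech cohomology with an $I$-torsion functor applied to a cosyzygy of $M$.

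For the forward direction, the key point is that $V(I) = V(I_j)$ for every $j \ge 1$, since the radicals of $I$ and $I_j$ coincide. If $M \in \bigcap_{i<n} \Ker H^i(I;-)$, then Corollary~\ref{C:Koszul-vanish} together with Proposition~\ref{P:grade-vass} translates this condition into $\vass(\Omega^{-i} M) \cap V(I) = \emptyset$ for every $i<n$, and applying the same two results in reverse, with $I_j$ replacing $I$ and $\mathbf{x}^j$ replacing $\mathbf{x}$, yields $H^i(\mathbf{x}^j; M) = 0$ for every $j$ and every $i < n$. The direct limit formula~\eqref{E:ltensor} then gives $\check{H}^i(I;M) = 0$.

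For the reverse direction, I would induct on $n$. The base case $n = 1$ is immediate, since $\Hom_R(R/I,M) = H^0(\mathbf{x};M)$ is a submodule of $\Gamma_I(M) = \check{H}^0(I;M)$. For the inductive step, assume the equality is known up to $n-1$ and let $M$ satisfy $\check{H}^i(I;M) = 0$ for $i \le n-1$; the inductive hypothesis together with the forward direction already yields $\ext_R^i(R/I_j, M) = 0$ for every $j$ and every $i \le n-2$. Proposition~\ref{P:Koszul-ext} applied to each $I_j$ therefore gives a natural isomorphism $H^{n-1}(\mathbf{x}^j;M) \cong \ext_R^{n-1}(R/I_j, M)$, and the standard iterated dimension shift through an injective coresolution of $M$ further produces a natural isomorphism $\ext_R^{n-1}(R/I_j, M) \cong \Hom_R(R/I_j, \Omega^{-(n-1)} M)$. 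Both identifications are natural in the surjections $R/I_{j+1} \twoheadrightarrow R/I_j$, so passing to the direct limit in $j$ produces
\[ 0 \;=\; \check{H}^{n-1}(I;M) \;\cong\; \varinjlim_j \Hom_R(R/I_j, \Omega^{-(n-1)} M) \;=\; \Gamma_I(\Omega^{-(n-1)} M). \]
Since $\Ker \Gamma_I = \Ker \Hom_R(R/I,-)$ (both are the torsion-free class of the hereditary torsion pair of finite type associated to $V(I)$ via Lemma~\ref{L:basic-hered-torsion}), this forces $\Hom_R(R/I, \Omega^{-(n-1)} M) = 0$; reversing the dimension shift gives $\ext_R^{n-1}(R/I, M) = 0$, and Proposition~\ref{P:Koszul-ext} applied one more time concludes $H^{n-1}(\mathbf{x}; M) = 0$. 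The main technical subtlety lies in verifying the naturality of the dimension-shift isomorphism in the surjections $R/I_{j+1} \twoheadrightarrow R/I_j$ so that it commutes with the direct limit, but this is a routine consequence of the functoriality of long exact $\ext$-sequences.
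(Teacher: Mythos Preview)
Your argument is correct, but the reverse inclusion takes a genuinely different route from the paper's.

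For the forward inclusion, the paper argues that $R/I_j$ is finitely filtered by $R/I$-modules and invokes Lemma~\ref{L:vanish} to get $\ext_R^i(R/I_j,M)=0$ for $i<n$, whereas you pass through the $\vass$ characterization of Proposition~\ref{P:grade-vass} and the observation $V(I)=V(I_j)$. These are essentially interchangeable applications of the material in Section~\ref{sec:grade}.

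For the reverse inclusion, the strategies diverge. The paper stays at the level of the direct system: it shows that each transition map $H^{n-1}(\mathbf{x}^j;M)\to H^{n-1}(\mathbf{x}^{j+1};M)$ is injective by applying $\Hom_R(-,M)$ to $0\to I_j/I_{j+1}\to R/I_{j+1}\to R/I_j\to 0$ and using $\ext_R^{n-2}(I_j/I_{j+1},M)=0$, so that the vanishing of the colimit $\check{H}^{n-1}(I;M)$ forces $H^{n-1}(\mathbf{x};M)=0$ directly. Your approach instead identifies the colimit itself: after the dimension shift you exhibit $\check{H}^{n-1}(I;M)\cong\Gamma_I(\Omega^{-(n-1)}M)$, then use $\Ker\Gamma_I=\Ker\Hom_R(R/I,-)$ and shift back. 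This is a bit heavier (you must check the naturality of the iterated dimension shift in the projections $R/I_{j+1}\twoheadrightarrow R/I_j$, which you flag), but it has the virtue of making the connection with Lemma~\ref{L:local-cohom-cotilt} transparent: you are essentially proving that, under the inductive hypothesis, $\check{H}^{n-1}(I;M)\cong\mathbb{R}^{n-1}\Gamma_I(M)$. The paper's argument is shorter and avoids cosyzygies entirely; yours gives more structural information.
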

	\begin{proof}
We proceed by induction on $n \geq 0$. For the induction step, we fix throughout the proof a module
$$M \in \bigcap_{i=0}^{n-1}\Ker H^i(I;-) = \bigcap_{i=0}^{n-1}\Ker \check{H}^{i}(I;-)$$ 
(which is a vacuous assumption if $n=0$), and prove that $H^n(I;M)=0$ if and only if $\check{H}^n(I;M)=0$. We recall the comparison map $q^n_M(j): \ext_R^n(R/I_j,M) \rightarrow H^n(\mathbf{x}^j;M)$ from \S\ref{subsec:Koszul}. By Proposition~\ref{P:Koszul-ext}, the map $q^n_M(j)$ is an isomorphism for any $j>0$.

	Assume first that $H^n(I;M)=0$. By the pigeon hole principle, $I_j/I_{j+1}$ is an $R/I^{m(j+1)}$-module for any $j>0$, and thus $R/I_j$ is finitely filtered by $R/I$-modules. Then $H^n(\mathbf{x}^j;M) \simeq \ext_R^n(R/I_j,M) = 0$ by Lemma~\ref{L:vanish}. Using (\ref{E:ltensor}), we infer $\check{H}^n(I;M)=0$. This proves one implication.

			To prove the other implication, assume that $\check{H}^n(I;M)=0$. It is enough to show that the direct system $(H^n(\mathbf{x}^j;M) \mid j>0)$ consists of monomorphisms, because then $(\ref{E:ltensor})$ immediately yields $H^n(I;M)=H^n(\mathbf{x}^1;M)=0$. Since $H^0(\mathbf{x}^j;M)=\{m \in M \mid I_jm=0\}$, where $I_j$ is the ideal generated by $\mathbf{x}^j$, we infer that the directed system $(H^0(\mathbf{x}^j;M) \mid j >0)$ consists of inclusions. For $n>0$, we argue as follows. By Corollary~\ref{C:Koszul-vanish}, we have $M \in \bigcap_{i=0}^{n-1}\Ker \ext_R^i(R/I,-)$. Consider the long exact sequence obtained by applying $\Hom_R(-,M)$ to the following exact sequence, where $\pi$ is the natural projection:
			$$0 \rightarrow I_j/I_{j+1} \rightarrow R/I_{j+1} \xrightarrow{\pi} R/I_j \rightarrow 0.$$
			By the same argument using Lemma~\ref{L:vanish} as above, we have
			$$\ext_R^{n-1}(I_j/I_{j+1},M)=0.$$ 
			It follows that the map $\ext_R^n(\pi,M)$ is a monomorphism. From the construction and naturality of $q^n_M$, we infer that the there is a commutative diagram
	\begin{equation}
			\label{E:cdmono}
		\begin{tikzcd}
				\ext_R^n(R/I_j,R) \arrow{r}{q^n_M(j)}\arrow{d}{{\ext_R^n(\pi,M)}} & H^n(\mathbf{x}^j;M)\arrow{d}{\phi_j}  \\
				\ext_R^n(R/I_{j+1},R) \arrow{r}{q^n_M(j+1)} & H^n(\mathbf{x}_{j+1};M), 
		\end{tikzcd}
	\end{equation}
			where $\phi_j$ is the $j$-th map from the direct system $(H^n(\mathbf{x}^j;M) \mid j >0)$. Since the horizontal maps of (\ref{E:cdmono}) are isomorphisms, we finally infer that this direct system consists of monomorphisms, as desired.
	\end{proof}
	
	Now we treat the \emph{\v{C}ech homology}, which we define, following \cite{Schenz}, in this way:
	$$\check{H}_i(I;-)=H_{i}(\mathbb{R}\Hom_R(\check{C}^\bullet(I),-)).$$ 
	Because this functor \emph{a priori} inhabitates strictly the derived category, we would like to replace $\check{C}^\bullet(I)$ by its projective resolution, in a way that respects the limit construction of $\check{C}^\bullet(I)$. To this end, we recall the \emph{telescope cochain complex} (here we follow \cite{PSY}). For any subset $X$ of $\omega$, let $F[X]$ be the free $R$-module with basis $\{\delta_j \mid j \in X\}$. Given an element $x \in R$ we let
	$$\Tel(x) = ( \cdots \rightarrow 0 \rightarrow F[\omega] \xrightarrow{d} F[\omega] \rightarrow 0 \rightarrow \cdots),$$	
	be the cochain complex concentrated in (cohomological) degrees 0 and 1, where the differential $d$ is defined on the above basis as follows
	$$d(\delta_j) = \begin{cases}
			\delta_0, & \text{if $j=0$}, \\
			\delta_{j-1} - x\delta_j,  & \text{otherwise}.
	\end{cases}$$
	For any $j>0$, we let
	$$\Tel_j(x)=( \cdots \rightarrow 0 \rightarrow F[j] \xrightarrow{d} F[j] \rightarrow 0 \rightarrow \cdots)$$
	be the subcomplex of $\Tel(x)$, so that $\Tel(x)=\bigcup_{j>0}\Tel_j(x)$. More generally, given a sequence of elements $\mathbf{x}=(x_1,x_2,\ldots,x_n)$ of $R$, we let
	$$\Tel_j(\mathbf{x}) = \bigotimes_{i=1}^n \Tel_j(x_i) \quad\textrm{and}\quad \Tel(\mathbf{x}) = \bigotimes_{i=1}^n \Tel(x_i).$$
	Note that again $\Tel(\mathbf{x}) = \bigcup_{j>0} \Tel_j(\mathbf{x})$. It follows from (\cite[Lemma 5.7]{PSY}) that there are natural homotopy equivalences 
	$$w_{\mathbf{x},j}: \Tel_j(\mathbf{x}) \rightarrow K_\bullet(\mathbf{x}^j)^*,$$
	such that their limit map 
	$$w_\mathbf{x}: \Tel(\mathbf{x}) \rightarrow \check{C}^\bullet(\mathbf{x})$$
	is a quasi-isomorphism. If $I$ is the ideal generated by $\mathbf{x}$, we can now represent the \v{C}ech homology as follows:
	\begin{gather*}
			\label{E:rhom}
			\check{H}_i(I;M) = H_{i}(\mathbb{R}\Hom_R(\check{C}^\bullet(I),M)) \simeq  H_{i}(\Hom_R(\Tel(\mathbf{x}),M)) \simeq \\
			\simeq H_{i}(\Hom_R(\varinjlim{}_{j>0}\Tel_j(\mathbf{x}),M)) \simeq H_{i}(\varprojlim{}_{j>0}\Hom_R(\Tel_j(\mathbf{x}),M)) \simeq \\
			\simeq H_{i}(\varprojlim{}_{j>0}(\Tel_j(\mathbf{x})^* \otimes_R M)).
	\end{gather*}
Of course, in general, taking homology does not commute with inverse limits. On the other hand, the inverse system $( \Tel_j(\mathbf{x})^* \otimes_R M \mid j>0)$ consists of epimorphisms, and thus satisfies the Mittag-Leffler condition (see \cite[Definition 3.5 and Lemma 3.6]{GT}). Using \cite[Theorem 3.5.8]{We}, we have for each $i \geq 0$ the following exact sequence:
$$
		0 \rightarrow \varprojlim{}_{j>0}^1 H_{i+1}(\Tel_j(\mathbf{x})^* \otimes_R M) \rightarrow \check{H}_i(I;M) \rightarrow \varprojlim{}_{j>0} H_i(\Tel_j(\mathbf{x})^* \otimes_R M) \rightarrow 0.
$$
Here, the symbol $\varprojlim_{j>0}^1$ stands for the first right derived functor of the inverse limit functor $\varprojlim_{j>0}$.
Furthermore, because
$$w_{\mathbf{x},j}^*:  K_\bullet(\mathbf{x}^j) \rightarrow \Tel_j(\mathbf{x})^*$$
is also a natural homotopy equivalence of complexes, we can rewrite this sequence as:
\begin{equation}
	\label{E: limone}
		0 \rightarrow \varprojlim{}_{j>0}^1 H_{i+1}(\mathbf{x}^j;M) \rightarrow \check{H}_i(I;M) \rightarrow \varprojlim{}_{j>0} H_i(\mathbf{x}^j;M) \rightarrow 0.
\end{equation}
We are ready to prove that \v{C}ech complexes allow to classify tilting classes (see also Theorem~\ref{T:mainthm3} below).
\begin{lem}
		\label{lem:Cech-hom}
		Let $R$ be a commutative ring, $I$ a finitely generated ideal, and $n > 0$. Then
			$$\bigcap_{i=0}^{n-1}\Ker H_i(I;-) = \bigcap_{i=0}^{n-1}\Ker \check{H}_{i}(I;-).$$
\end{lem}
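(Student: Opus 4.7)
The plan is to proceed by induction on $n$, mirroring the proof of Lemma~\ref{lem:Cech-cohom} but using the dual Koszul-Tor comparison from Remark~\ref{R:Koszul-tor} in place of Proposition~\ref{P:Koszul-ext}, together with the short exact sequence~(\ref{E: limone}). The case $n=0$ is vacuous; for the inductive step I fix
$$M \in \bigcap_{i=0}^{n-1}\Ker H_i(I;-) = \bigcap_{i=0}^{n-1}\Ker \check{H}_i(I;-)$$
and show that $H_n(I;M)=0$ if and only if $\check{H}_n(I;M)=0$. By Remark~\ref{R:Koszul-tor} the hypothesis is equivalent to $M \in \bigcap_{i<n}\Ker\tor_i^R(R/I,-)$, and as in the proof of Lemma~\ref{lem:Cech-cohom} each $R/I_j$ is finitely filtered by $R/I$-modules (by pigeon hole on the fixed generators $\mathbf{x}$), so the dual of Lemma~\ref{L:vanish} gives $M \in \bigcap_{i<n}\Ker\tor_i^R(R/I_j,-)$ for every $j>0$. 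Hence Remark~\ref{R:Koszul-tor} supplies natural isomorphisms $q^M_n(j)\colon H_n(\mathbf{x}^j;M)\xrightarrow{\cong}\tor_n^R(R/I_j,M)$ and vanishings $H_i(\mathbf{x}^j;M)=0$ for all $i<n$ and $j>0$.

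The key observation is then that the transition maps of the inverse system $\bigl(H_n(\mathbf{x}^j;M)\mid j>0\bigr)$ are surjective. To see this, I apply $-\otimes_R^{\mathbb{L}} M$ to the short exact sequence $0\to I_j/I_{j+1}\to R/I_{j+1}\to R/I_j\to 0$; since $I_j/I_{j+1}$ is also finitely filtered by $R/I$-modules, the dual of Lemma~\ref{L:vanish} yields $\tor_{n-1}^R(I_j/I_{j+1},M)=0$, so the Tor long exact sequence forces $\tor_n^R(R/I_{j+1},M)\twoheadrightarrow\tor_n^R(R/I_j,M)$, and naturality of the $q^M_n(j)$ transfers this surjectivity to the Koszul side. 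Exactly the same argument shifted one degree up shows, under the additional hypothesis $H_n(I;M)=0$, that the system $\bigl(H_{n+1}(\mathbf{x}^j;M)\bigr)$ also has surjective transition maps, hence satisfies Mittag-Leffler.

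Both implications now follow from~(\ref{E: limone}). If $H_n(I;M)=0$, then the $q^M_n(j)$ combined with the filtration argument make $H_n(\mathbf{x}^j;M)$ vanish for every $j$, so $\varprojlim_{j} H_n(\mathbf{x}^j;M)=0$, while the Mittag-Leffler property of $\bigl(H_{n+1}(\mathbf{x}^j;M)\bigr)$ kills $\varprojlim_{j}^{1} H_{n+1}(\mathbf{x}^j;M)$; (\ref{E: limone}) then forces $\check{H}_n(I;M)=0$. Conversely, if $\check{H}_n(I;M)=0$, then $\varprojlim_{j} H_n(\mathbf{x}^j;M)=0$, and since this $\omega$-indexed system has surjective transitions, the projection from its limit onto the initial term $H_n(\mathbf{x}^1;M)=H_n(I;M)$ is itself surjective, forcing $H_n(I;M)=0$. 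I expect the main bookkeeping hurdle to be verifying that $q^M_n(j)$ is natural in $j$ in a way that genuinely intertwines the Koszul and Tor transition maps, so that the surjectivity proven via the Tor long exact sequence really does transport cleanly to the Koszul inverse system.
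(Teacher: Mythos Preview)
Your proposal is correct and follows essentially the same approach as the paper: induction on $n$, the Koszul--Tor comparison from Remark~\ref{R:Koszul-tor}, the surjectivity of the transition maps via the Tor long exact sequence applied to $0\to I_j/I_{j+1}\to R/I_{j+1}\to R/I_j\to 0$, and the short exact sequence~(\ref{E: limone}) together with Mittag--Leffler to control $\varprojlim^1$. The naturality concern you flag is exactly the analogue of the commutative square~(\ref{E:cdmono}) used in Lemma~\ref{lem:Cech-cohom}, and the paper simply appeals to that argument by analogy.
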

\begin{proof}
		We proceed again by induction on $n \geq 0$, and fix throughout the proof a module $M \in \bigcap_{i=0}^{n-1}\Ker H_i(I;-) = \bigcap_{i=0}^{n-1}\Ker \check{H}_{i}(I;-)$. 

		Suppose first that $H_n(I;M)=0$. By Remark~\ref{R:Koszul-tor}, $H_{i}(\mathbf{x}^j;M)$ is naturally isomorphic to $\tor_{i}^R(R/I_j,M)$ for each $j>0$ and $i=0,1,\ldots,n+1$. An argumentation analogous to the one in the proof of Lemma~\ref{lem:Cech-cohom} then yields that $H_n(\mathbf{x}^j,M)=0$ for each $j>0$, and that the inverse system $(H_{n+1}(\mathbf{x}^j;M) \mid j>0)$ consists of epimorphisms. In particular, this system is Mittag-Leffler, and thus \cite[Lemma 3.6]{GT} implies that
	$$\varprojlim{}_{j>0}^1 H_{n+1}(\mathbf{x}^j;M)=0.$$
	Therefore, we can use (\ref{E: limone}) to infer that $\check{H}_n(I;M) \simeq \varprojlim_{j>0} H_n(\mathbf{x}^j;M)=0$. This proves $\bigcap_{i=0}^{n-1}\Ker H_i(I;-) \subseteq \bigcap_{i=0}^{n-1}\Ker \check{H}_{i}(I;-)$.

	To prove the other inclusion, suppose that $\check{H}_n(I;M)=0$. By (\ref{E: limone}), this implies $\varprojlim_{j>0} H_n(\mathbf{x}^j;M)=0$. Using again the same argument as above for homological degree shifted by $-1$, the inverse system $(H_{n}(\mathbf{x}^j;M) \mid j>0)$ consists of epimorphisms (in the initial case of $n=0$, it consists of projections $M/I_{j+1}M \rightarrow M/I_jM$). It follows that $H_n(\mathbf{x};M)=0$, and thus $H_n(I;M)=0$.
\end{proof}

\subsection{Main theorem revisited}

In this section, we show that instead of Ext/Tor or Koszul (co)homology, we can use either local (co)homology, or \v{C}ech (co)homology, in the formulation of Theorem~\ref{T:mainthm2}. We prove the remaining parts in the following Lemmas, and then state the alternative classification Theorem.
\begin{lem}
		\label{L:local-cohom-cotilt}
		Let $R$ be a commutative ring and $\mathfrak{S}=(\mathcal{F}_0,\mathcal{F}_1,\ldots,\mathcal{F}_{n-1})$ a characteristic sequence of length $n$. Then
		$$\mathcal{C}(\mathfrak{S})=\bigcap_{i=0}^{n-1} \bigcap_{I \in \mathcal{G}_i^f(\mathfrak{S})} \{M \in \ModR \mid \mathbb{R}^i\Gamma_I(M)=0\}.$$
\end{lem}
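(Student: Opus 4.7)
The plan is to reduce the statement to Lemma~\ref{L27}, which already gives
$$\mathcal{C}(\mathfrak{S}) = \bigcap_{i=0}^{n-1}\bigcap_{I \in \mathcal{G}_i^{f}(\mathfrak{S})} \Ker \ext_R^i(R/I,-),$$
so that it suffices to prove
$$\bigcap_{i=0}^{n-1}\bigcap_{I \in \mathcal{G}_i^{f}(\mathfrak{S})} \Ker \ext_R^i(R/I,-) = \bigcap_{i=0}^{n-1}\bigcap_{I \in \mathcal{G}_i^{f}(\mathfrak{S})} \Ker \mathbb{R}^i\Gamma_I.$$
The starting point is the standard identification $\mathbb{R}^i\Gamma_I(M) \cong \varinjlim_k \ext_R^i(R/I^k, M)$, which is obtained by choosing an injective resolution $M \to E^\bullet$, writing $\Gamma_I(E^\bullet) = \varinjlim_k \Hom_R(R/I^k,E^\bullet)$, and using exactness of filtered colimits in $\ModR$.

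For the inclusion $(\subseteq)$, I would observe that since $\mathcal{G}_i(\mathfrak{S})$ is a Gabriel topology of finite type, Lemma~\ref{L:Gabriel-finite} implies that $\mathcal{G}_i^f(\mathfrak{S})$ is closed under finite products of ideals; in particular $I^k \in \mathcal{G}_i^f(\mathfrak{S})$ whenever $I \in \mathcal{G}_i^f(\mathfrak{S})$ and $k\ge 1$. Hence for any $M \in \mathcal{C}(\mathfrak{S})$, Lemma~\ref{L27} yields $\ext_R^i(R/I^k,M)=0$ for all $k\ge 1$, and passing to the filtered colimit gives $\mathbb{R}^i\Gamma_I(M)=0$.

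For the reverse inclusion $(\supseteq)$, I would proceed by induction on $i$. The case $i=0$ is immediate since $\Hom_R(R/I,M)$ embeds into $\Gamma_I(M)$. For the inductive step, fix $I \in \mathcal{G}_i^f(\mathfrak{S})$; because $\mathcal{G}_0 \supseteq \cdots \supseteq \mathcal{G}_{n-1}$, we have $I \in \mathcal{G}_j^f(\mathfrak{S})$ for every $j\le i$, and so the induction hypothesis yields $M \in \bigcap_{j=0}^{i-1}\Ker\ext_R^j(R/I,-)$. By Lemma~\ref{L:vanish}, $\ext_R^j(N,M)=0$ for $j<i$ and every $R/I$-module $N$. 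The module $I/I^k$ is finitely filtered by the $R/I$-modules $I^m/I^{m+1}$ via $0 \subseteq I^{k-1}/I^k \subseteq \cdots \subseteq I/I^k$, and a straightforward induction along this filtration (using Lemma~\ref{L:vanish} and the long exact $\Hom_R(-,M)$-sequences) gives $\ext_R^{i-1}(I/I^k,M)=0$ for every $k\ge 1$. Then the long exact sequence associated to the short exact sequence $0 \to I/I^k \to R/I^k \to R/I \to 0$ produces a monomorphism $\ext_R^i(R/I,M) \hookrightarrow \ext_R^i(R/I^k,M)$ for each $k$. Taking the filtered colimit over $k$ yields an injection $\ext_R^i(R/I,M) \hookrightarrow \mathbb{R}^i\Gamma_I(M) = 0$, completing the induction.

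The only non-routine step is establishing the monomorphism $\ext_R^i(R/I,M) \hookrightarrow \ext_R^i(R/I^k,M)$ under the inductive hypothesis; once that is in hand the rest is formal. This step is handled by the filtration of $R/I^k$ by $R/I$-modules together with Lemma~\ref{L:vanish}, which is essentially the tool purpose-built for this kind of reduction.
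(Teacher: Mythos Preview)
Your proof is correct, but it takes a different route from the paper's. The paper argues directly from the cosyzygy definition of $\mathcal{C}(\mathfrak{S})$ (Notation~\ref{C(S)}): it shows by induction that whenever $\Gamma_I(\Omega^{-(j-1)}M)=0$ one has a natural isomorphism $\mathbb{R}^j\Gamma_I(M)\simeq\Gamma_I(\Omega^{-j}M)$, obtained by applying $\Gamma_I$ to $0\to\Omega^{-(j-1)}M\to E(\Omega^{-(j-1)}M)\to\Omega^{-j}M\to 0$ and using that $\Gamma_I$ vanishes on the injective envelope. Since $\Gamma_I(N)=0$ if and only if $\Hom_R(R/I,N)=0$, this gives both inclusions at once, with no need for the colimit formula or the filtration argument.

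Your approach instead factors through Lemma~\ref{L27} and the identification $\mathbb{R}^i\Gamma_I(M)\cong\varinjlim_k\ext_R^i(R/I^k,M)$, and then handles the two inclusions separately via closure of $\mathcal{G}_i^f(\mathfrak{S})$ under powers and the monomorphism $\ext_R^i(R/I,M)\hookrightarrow\ext_R^i(R/I^k,M)$. This is longer but perfectly valid, and in fact mirrors closely the style of argument the paper uses later for Lemmas~\ref{lem:Cech-cohom} and~\ref{L:local-hom-tilt} (where no simple dimension-shift is available). What the paper's argument buys is brevity and a clean conceptual reason: local cohomology dimension-shifts exactly like $\ext$ along the minimal injective coresolution once the lower torsion vanishes. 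What your argument buys is a more explicit link to the Gabriel-topology structure and a template that transfers uniformly to the other (co)homology theories treated in~\S\ref{sec:derived}.
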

\begin{proof}
	This follows directly from the definition of $\mathcal{C}(\mathfrak{S})$ (see Notation~\ref{C(S)}). Indeed, if $I \in \mathcal{G}_m^f$ and $0\le m<n$, then a module $M\in\mathcal{C}(\mathfrak{S})$ must be in the class 
	$$ \bigcap_{i=0}^m \Ker \Hom_R(R/I,\Omega^{-i}M) = \bigcap_{i=0}^m \Ker \Gamma_I(\Omega^{-i}M). $$
	We prove by induction on $j\le m$ that $\mathbb{R}^j\Gamma_I(M)\simeq\Gamma_I(\Omega^{-j}M)$ for each $M\in \Ker \Hom_R(R/I,\Omega^{-(j-1)}M) = \Ker \Gamma_I(\Omega^{-(j-1)}M)$ (this condition is vacuous for $j=0$).
	This follows from the definition of $\Gamma_I$ for $j=0$, and for $j>0$ note that $\Gamma_I(\Omega^{-(j-1)}M)=0$ implies $\Gamma_I(E(\Omega^{-(j-1)}M))=0$ and we have an exact sequence 
	$$ 0= \Gamma_I(E(\Omega^{-(j-1)}M)) \rightarrow \Gamma_I(\Omega^{j}M) \rightarrow \mathbb{R}^j\Gamma_I(M) \rightarrow 0. \eqno{\qedhere} $$
\end{proof}
\begin{lem}
		\label{L:local-hom-tilt}
		Let $R$ be a commutative ring, $I$ a finitely generated ideal, and $n>0$. Then:
		$$\bigcap_{i=0}^{n-1} \Ker \tor_i^R(R/I,-) = \bigcap_{i=0}^{n-1} \{M \in \ModR \mid \mathbb{L}_i \Lambda_I(M)=0\}.$$
\end{lem}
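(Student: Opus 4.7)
The plan is to mirror the proof of Lemma~\ref{L:local-cohom-cotilt}, with projective resolutions and the Milnor $\varprojlim^1$-sequence playing the role of injective envelopes and the dimension-shift formulas. Fixing a projective resolution $P_\bullet\to M$ and using that the ideals $I_k=(\mathbf{x}^k)$ are cofinal with $(I^k)_k$, we have $\mathbb{L}\Lambda_I(M) \simeq \varprojlim_k (P_\bullet \otimes_R R/I_k)$; since the transition maps $P_\bullet \otimes_R R/I_{k+1} \to P_\bullet \otimes_R R/I_k$ are componentwise surjective, \cite[Theorem 3.5.8]{We} yields for each $i \ge 0$ a short exact sequence
\[
0 \to \varprojlim{}_k^1 \tor_{i+1}^R(R/I_k,M) \to \mathbb{L}_i \Lambda_I(M) \to \varprojlim_k \tor_i^R(R/I_k,M) \to 0,
\]
which will be the single technical input.

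The next ingredient is a Tor-d\'evissage: the pigeonhole argument from the proof of Lemma~\ref{lem:Cech-cohom} shows that both $R/I_k$ and $I_k/I_{k+1}$ admit finite filtrations by $R/I$-modules. Combined with the dual of Lemma~\ref{L:vanish}, whose proof is formally identical once the embedding of $K$ into $(R/I)^{(\varkappa)}$ is replaced by a surjection $(R/I)^{(\varkappa)} \twoheadrightarrow N$, this gives that whenever $\tor_i^R(R/I,M) = 0$ for all $i<n$, we have $\tor_i^R(R/I_k,M) = \tor_i^R(I_k/I_{k+1},M) = 0$ for all $k$ and $i<n$.

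For the forward inclusion $\subseteq$, the d\'evissage makes the right-hand $\varprojlim$-term of the Milnor sequence vanish for $i<n$. The only nontrivial $\varprojlim^1$-term is at $i=n-1$, namely $\varprojlim{}_k^1 \tor_n^R(R/I_k,M)$; this vanishes because the long exact Tor-sequence for $0 \to I_k/I_{k+1} \to R/I_{k+1} \to R/I_k \to 0$ combined with $\tor_{n-1}^R(I_k/I_{k+1},M)=0$ shows that the inverse system $(\tor_n^R(R/I_k,M))_k$ consists of surjections, so it is Mittag-Leffler and $\varprojlim^1=0$ by \cite[Lemma 3.6]{GT}. For the reverse inclusion $\supseteq$, I would induct on $i<n$: the base $i=0$ follows from the Milnor sequence, since $\varprojlim_k M/I_k M = 0$ and the transition maps $M/I_{k+1}M \to M/I_kM$ are evidently surjective, so each $M/I_k M$, and in particular $M/IM = \tor_0^R(R/I,M)$, vanishes; for the inductive step, the inductive hypothesis plus the same d\'evissage gives surjectivity of $\tor_i^R(R/I_{k+1},M) \to \tor_i^R(R/I_k,M)$, and the Milnor sequence combined with $\mathbb{L}_i\Lambda_I(M)=0$ forces each $\tor_i^R(R/I_k,M)=0$, whence $\tor_i^R(R/I,M)=0$.

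The hardest step is really just setting up the Milnor exact sequence with the correct indexing; once $\mathbb{L}\Lambda_I$ is computed via a projective resolution (standard), the componentwise-surjective Mittag-Leffler condition on $(P_\bullet \otimes_R R/I_k)_k$ is immediate and \cite[Theorem 3.5.8]{We} applies. Everything else is a routine propagation of the Mittag-Leffler surjectivity condition through successive degrees of the Tor d\'evissage.
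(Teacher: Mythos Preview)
Your proof is correct and follows essentially the same approach as the paper. The paper cites the Milnor sequence directly from \cite[Proposition 1.1]{GM} (with the powers $I^j$ rather than your cofinal system $I_k$, which slightly simplifies the d\'evissage since $I^j/I^{j+1}$ is an $R/I$-module outright), but otherwise the argument---surjectivity of the Tor-transition maps from the long exact sequence, Mittag--Leffler to kill $\varprojlim^1$, and the observation that a surjective tower with zero limit has zero terms---is identical; your framing as ``mirroring Lemma~\ref{L:local-cohom-cotilt}'' is a bit off, since the paper itself says it mirrors Lemma~\ref{lem:Cech-hom}.
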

\begin{proof}
		The shape of the proof is the same as that of Lemma~\ref{lem:Cech-hom}, using the exact sequence \cite[Proposition 1.1]{GM} instead of (\ref{E: limone}). For the convenience of the reader, we provide details here. We proceed by induction on $n \geq 0$ (the case of $n=0$ is a vacuous statement). For the induction step, we will assume that $M \in \bigcap_{i=0}^{n-1} \Ker \mathbb{L}_i \Lambda_I$. By \cite[Proposition 1.1]{GM}, there is an exact sequence:
		\begin{equation}
			\label{E:GM-sequence}
				0 \rightarrow \varprojlim{}_{j>0}^1 \tor{}_{n+1}^R(R/I^j,M) \rightarrow \mathbb{L}_{n}\Lambda_I(M) \rightarrow \varprojlim_{j>0} \tor{}_{n}^R(R/I^j,M) \rightarrow 0.
		\end{equation}
		If $\mathbb{L}_{n}\Lambda_I(M)=0$, then rightmost term also vanishes. By the induction hypothesis we have $M \in \bigcap_{i=0}^{n-1} \Ker \tor_i^R(R/I,-)$. Applying $- \otimes_R M$ to the exact sequence
		$$0 \rightarrow I^j/I^{j+1} \rightarrow R/I^{j+1} \rightarrow R/I^j \rightarrow 0,$$
		and noting that $\tor_{n-1}^R(I^j/I^{j+1},M)=0$, we infer that the inverse system 
		$$(\tor{}_{n}^R(R/I^j,M) \mid j>0)$$ 
		consists of epimorphisms. This shows that $\tor_n^R(R/I,M)=0$, proving one inclusion. To prove the other inclusion, suppose now that $\tor_n^R(R/I,M)=0$. It follows easily that $\tor_n^R(R/I^j,M)=0$ for all $j>0$, and thus the rightmost term of \eqref{E:GM-sequence} is zero. By repeating the same argument as above for a homological degree shifted by $1$, we obtain that the inverse system $(\tor{}_{n+1}^R(R/I^j,M) \mid j>0)$ consists of epimorphisms, and thus is Mittag-Leffler. Hence, the leftmost term of \eqref{E:GM-sequence} also vanishes by \cite[Lemma 3.6]{GT}, and thus $\mathbb{L}_n\Lambda_I(M)=0$, finishing the proof.
	\end{proof}
	\begin{thm}
			\label{T:mainthm3} Let $R$ be a commutative ring. Consider the following collections:
			\begin{enumerate}
					\item[(i)] characteristic sequences $\mathfrak{S}=(\mathcal{F}_0,\mathcal{F}_1,\ldots,\mathcal{F}_{n-1})$ of length $n$,
					\item[(ii)] $n$-tilting classes in $\ModR$,
					\item[(iii)] $n$-cotilting classes of cofinite type in $\ModR$,
			\end{enumerate}
			The following assignments are bijections $(i) \rightarrow (ii)$:
			$$\mathfrak{S} \mapsto \bigcap_{i=0}^{n-1} \bigcap_{I \in \mathcal{G}_i^f(\mathfrak{S})} \{M \in \ModR \mid \mathbb{L}_i \Lambda_I(M)=0\},$$
			$$\mathfrak{S} \mapsto \bigcap_{i=0}^{n-1} \bigcap_{I \in \mathcal{G}_i^f(\mathfrak{S})} \{M \in \ModR \mid \check{H}_i(I;M)=0\},$$
			and the following assignments are bijections $(i) \rightarrow (iii)$:
			$$\mathfrak{S} \mapsto \bigcap_{i=0}^{n-1} \bigcap_{I \in \mathcal{G}_i^f(\mathfrak{S})} \{M \in \ModR \mid \mathbb{R}_i \Gamma_I(M)=0\},$$
			$$\mathfrak{S} \mapsto \bigcap_{i=0}^{n-1} \bigcap_{I \in \mathcal{G}_i^f(\mathfrak{S})} \{M \in \ModR \mid \check{H}^i(I;M)=0\}.$$
			\end{thm}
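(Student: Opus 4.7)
The plan is to bootstrap on Theorem~\ref{T:mainthm2}, which already establishes the two bijections between characteristic sequences and $n$-(co)tilting classes using the Ext and Tor assignments. Hence all that remains to verify is that, for any characteristic sequence $\mathfrak{S}$, replacing $\ext^i_R(R/I,-)$ in the cotilting assignment (resp.\ $\tor^R_i(R/I,-)$ in the tilting assignment) by $\mathbb{R}^i\Gamma_I$ or $\check{H}^i(I;-)$ (resp.\ $\mathbb{L}_i\Lambda_I$ or $\check{H}_i(I;-)$) yields the same class of modules. Crucially, these replacements are legitimate only after intersecting over \emph{all} $i = 0,1,\ldots,n-1$; the individual derived functors themselves need not agree, and this is the conceptual subtlety one wants to highlight in the non-noetherian setting.

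First I would dispatch the cotilting side. For the local cohomology assignment, Lemma~\ref{L:local-cohom-cotilt} directly expresses $\mathcal{C}(\mathfrak{S})$ as the desired intersection of kernels of $\mathbb{R}^i\Gamma_I$, so combining it with the bijection $\Psi$ of Theorem~\ref{T01} yields the claim. For the Čech cohomology assignment I would combine Lemma~\ref{lem:Cech-cohom} with Corollary~\ref{C:Koszul-vanish} to obtain the per-ideal identification
$$\bigcap_{i=0}^{n-1}\Ker \check{H}^i(I;-) = \bigcap_{i=0}^{n-1}\Ker H^i(I;-) = \bigcap_{i=0}^{n-1}\Ker \ext_R^i(R/I,-),$$
and then identify the resulting intersection with the Ext class from Theorem~\ref{T:mainthm2}.

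The tilting side proceeds by a symmetric argument. The local homology case is recorded in Lemma~\ref{L:local-hom-tilt}, which gives
$$\bigcap_{i=0}^{n-1}\Ker \tor^R_i(R/I,-) = \bigcap_{i=0}^{n-1}\Ker \mathbb{L}_i\Lambda_I,$$
reducing the claim to the Tor assignment of Theorem~\ref{T:mainthm2}. The Čech homology case follows analogously by combining Lemma~\ref{lem:Cech-hom} with Remark~\ref{R:Koszul-tor} to identify vanishing of $\check{H}_i(I;-)$ up to degree $n-1$ with vanishing of the corresponding Tor groups, and again invoking Theorem~\ref{T:mainthm2}.

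The main obstacle---producing the four per-ideal equivalences between Koszul, local, and Čech (co)homology up to an intersected vanishing range---has already been absorbed into the preceding lemmas of this section, so the proof of Theorem~\ref{T:mainthm3} itself is pure assembly. In the write-up I would simply string together the four corresponding citations, without any further computation.
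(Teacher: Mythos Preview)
Your proposal is correct and matches the paper's own proof essentially verbatim: the paper simply writes that the theorem follows by combining Theorem~\ref{T:mainthm2}, Corollary~\ref{C:Koszul-vanish}, and Lemmas~\ref{L:local-cohom-cotilt}, \ref{L:local-hom-tilt}, \ref{lem:Cech-cohom}, \ref{lem:Cech-hom}. Your explicit invocation of Remark~\ref{R:Koszul-tor} for the Koszul-to-Tor step on the tilting side is in fact a small improvement in precision over the paper's citation list.
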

\begin{proof}
		Follows by putting together Theorem~\ref{T:mainthm2}, Corollary~\ref{C:Koszul-vanish}, and Lemmas~\ref{L:local-cohom-cotilt}, \ref{L:local-hom-tilt}, \ref{lem:Cech-cohom}, and \ref{lem:Cech-hom}.
\end{proof}

\subsection{The big picture}
\label{subsec:picture}
    As the four homological and four cohomological theories used in the classification Theorems~\ref{T:mainthm2} and~\ref{T:mainthm3} may feel a little overwhelming, we devote this and the next subsection to a short explanation of the relations between these. In particular we show that the \v{C}ech (co)homology, analogously to the local (co)homology, also induces a pair of adjoint functors between derived categories, and that there are always comparison functors between the \v{C}ech and local (co)homologies which are equivalences under a technical condition.
    
    Although in this material is not really new, it requires some effort to extract it from the existing literature~\cite{AJL,GM,PSY,Schenz}. Here we especially rely on a recent and original presentation in \cite[Theorem 3.4]{Pos}. In fact, all we want to say is essentially a reformulation of \cite[Theorem 3.4]{Pos} and its proof, and we refer the reader to \cite{Pos} for a more comprehensive treatment. We will freely use the theory of localization of the derived category, as well as recollements and their translation to TTF triples. For these we refer  to \cite{BBD}, \cite{Kr}, or \cite{NS}.

 Given a full subcategory $\mathcal{A}$ of $\ModR$, let us denote by $\Der_\mathcal{A}(\ModR)$ the subcategory of $\Der(\ModR)$ consisting of all complexes such that their homology modules live in $\mathcal{A}$. If $\mathcal{A}$ is an extension closed abelian subcategory of $\ModR$, then $\Der_\mathcal{A}(\ModR)$ is a triangulated subcategory of $\Der(\ModR)$.
 Given a set of objects $\mathcal{S}$ in $\Der(\ModR)$, let $\Loc(\mathcal{S})$ be the smallest localizing subcategory of $\Der(\ModR)$ containing $\mathcal{S}$. Let $I$ be an ideal generated by $\mathbf{x}=(x_1,x_2,\ldots,x_n)$. By \cite[Proposition 5.1]{Pos}, the category $\Der_{\mathcal{T}_I}(\ModR)$ is generated (as a triangulated subcategory) by the compact object $\Tel_j(\mathbf{x})$ for any fixed $j>0$. Using \cite[Proposition 2.1.2]{KP} (see also~\cite[\S6]{DG02}), we have
	$$\Loc(R/I)=\Loc(K_\bullet(I))=\Der{}_{\mathcal{T}_I}(\ModR).$$
Using the machinery of localization theory of triangulated categories (see \cite{Kr}, namely \cite[4.9.1, 4.13.1, 5.3.1, 5.4.1, 5.5.1]{Kr}) there is a recollement (we adopt the convention that going up amounts to taking \emph{left adjoints})
	\begin{equation}
		\label{E:recollement}
	\begin{tikzcd}
			\Der_{\mathcal{T}_I}(\ModR)^\perp \arrow[yshift=0ex]{r}{\subseteq} & \Der(\ModR) \arrow[yshift=0]{r}[pos=.6]{T} \arrow[bend left=25]{l}[above]{}\arrow[bend right=25]{l}[above]{}
& \Der_{\mathcal{T}_I}(\ModR) \arrow[bend left=25]{l}[below]{H}\arrow[bend right=25]{l}[above]{\iota},
	\end{tikzcd}
	\end{equation}
	corresponding (as in \cite[2.1]{NS}) to the \emph{TTF triple}
	$$(\Loc(R/I),\mathcal{Y},\mathcal{Z}),$$
	where $\mathcal{Y}=\Der_{\mathcal{T}_I}(\ModR)^\perp$. By \cite[Theorem 2.2.4]{KP}, we also have 
	\begin{equation}
		\label{E:loc-KP}
	\mathcal{Y}=\Loc(R_{x_1},R_{x_2},\ldots,R_{x_n}).
	\end{equation}
	In particular it is a localizing subcategory, and hence a \emph{tensor ideal} by \cite[Lemma 1.1.8]{KP}.
	Consider the triangle
	\begin{equation}
	\label{E:cech-triangle}
	\check{C}^\bullet(I) \xrightarrow{f} R \rightarrow \Cone(f) \rightarrow \Sigma \check{C}^\bullet(I),
	\end{equation}
	induced by the identity map $R \rightarrow R$ in degree 0. Let $M$ be a complex and apply $- \otimes_R^{\mathbb{L}}M$ to \eqref{E:cech-triangle} in order to obtain a triangle
	\begin{equation}
		\label{E:approxtriangle}
\check{C}^\bullet(I) \otimes_R M \xrightarrow{f \otimes_R M} M \longrightarrow \Cone(f) \otimes_R M \longrightarrow \Sigma \check{C}^\bullet(I) \otimes_R M.
	\end{equation}
    Then $\check{C}^\bullet(I) \otimes_R M \in \Der_{\mathcal{T}_I}(\ModR)$ by \cite[Lemma 1.1]{Pos}. Note that $\Cone(f)$ is quasi-isomorphic to the complex
	$$\bigoplus_{i=1}^n R_{x_i} \rightarrow \bigoplus_{1 \leq i < j \leq n} R_{x_i,x_j} \rightarrow \cdots \rightarrow R_{x_1,x_2,\ldots,x_n}.$$
	Since $\mathcal{Y}$ is thick and a tensor ideal, it follows from \ref{E:loc-KP} that $\Cone(f) \otimes_R M \in \mathcal{Y}$. Then (\ref{E:approxtriangle}) is the approximation triangle for $M$ with respect to the torsion pair $(\Loc(R/I),\mathcal{Y})$, and thus by \cite[Proposition 1.3.3]{BBD} or \cite[Theorem 1.1.9]{KP}, the right adjoint $T$ is equivalent to $\check{C}^\bullet(I) \otimes_R -$.

	Since $T$ composed with the inclusion $\iota$ is equivalent to $\check{C}^\bullet(I) \otimes_R^{\mathbb{L}} -$, passing to right adjoints we obtain $HT \simeq \mathbb{R}\Hom_R(\check{C}^\bullet(I),-)$. From the description of recollements arising from TTF triples (see \cite[2.1]{NS}), we get 
	$$\mathbb{R}\Hom_R(\check{C}^\bullet(I),-) \simeq HT \simeq \tau^\mathcal{Z} \iota T,$$
	where $\tau^\mathcal{Z}$ is the left adjoint to the inclusion $\mathcal{Z} \subseteq \Der(\ModR)$. Since $\iota T$ is a triangle equivalence $\Loc(R/I) \rightarrow \mathcal{Z}$, we finally infer that $\mathbb{R}\Hom_R(\check{C}^\bullet(I),-)$ is the left adjoint to the inclusion $\mathcal{Z} \subseteq \Der(\ModR)$. Now it can be easily checked that the triangle obtained by applying $\mathbb{R}\Hom_R(\check{C}^\bullet(I),-)$ to \eqref{E:cech-triangle} is the approximation triangle with respect to the torsion pair $(\mathcal{Y},\mathcal{Z})$.
	This yields that a complex $M$ is in $\mathcal{Z}$ if and only if the natural map $\mathbb{R}\Hom_R(\check{C}^\bullet(I),f): M \mapsto \mathbb{R}\Hom_R(\check{C}^\bullet(I),M)$ is an isomorphism. Combining \cite[Lemma 2.2 a),c)]{Pos}, and the proof of \cite[Theorem 3.4]{Pos}, we conclude that $\mathcal{Z}=\Der_{\mathcal{C}_I}(\ModR)$, where $\mathcal{C}_I$ is the subcategory of $I$-contramodules (see~\S\ref{subsec:loccohom}).

	To summarize, since a composition of right (left) adjoints is a right (left) adjoint, respectively, we have two compositions of adjoint pairs depicted in~\eqref{E:pic-tor} and~\eqref{E:pic-contra}. Here $F := \mathbb{R}\Gamma_I{\restriction \Der_{\mathcal{T}_I}(\ModR)}$ is the right adjoint of the canonical functor $\Der(\mathcal{T}_I) \to \Der_{\mathcal{T}_I}(\ModR)$ and similarly $G$ is the left adjoint of the canonical functor $\Der(\mathcal{C}_I) \to \Der_{\mathcal{C}_I}(\ModR)$. Both $\mathbb{R}\Gamma_I$ and $\mathbb{L}\Delta_I$ are then naturally equivalent to the compositions of the corresponding two ``short'' adjoints pointing to the left:
	\begin{equation}
	\label{E:pic-tor}
	\begin{tikzcd}
			\Der(\mathcal{T}_I) \arrow[bend left]{r} & \Der_{\mathcal{T}_I}(\ModR) \arrow[bend left]{l}{F}
			\arrow[bend left]{r}{\subseteq} & \Der(\ModR)\arrow[bend left]{l}[pos=0.51]{\check{C}^\bullet(I) \otimes_R -} \arrow[bend left=65]{ll}{\mathbb{R}\Gamma_I}
	\end{tikzcd}
	\end{equation}
	\begin{equation}
	\label{E:pic-contra}
	\begin{tikzcd}
			\Der(\mathcal{C}_I) \arrow[bend right]{r} & \Der_{\mathcal{C}_I}(\ModR) \arrow[bend right]{l}[above]{G}
			\arrow[bend right]{r}[below]{\subseteq} & \Der(\ModR)\arrow[bend right]{l}[above,pos=0.6]{\mathbb{R}\Hom_R(\check{C}^\bullet(I),-)}\arrow[bend right=65]{ll}{\mathbb{L}\Delta_I}
	\end{tikzcd}
	\end{equation}
	
	There is a technical condition on $I$, so-called weak proregularity of $I$, which ensures (and in fact is equivalent to) that both adjunctions on the left in \eqref{E:pic-tor} and \eqref{E:pic-contra} are in fact equivalences (see \cite[Theorem 1.3, Corollary 2.10]{Pos}). We will discuss this in \S\ref{subsec:weak-proregularity}
	
	Here we conclude by noting that in such a case, the local (co)homology coincides with the \v{C}ech (co)homology. Furthermore, weak proregularity also implies that $\mathbb{L}\Delta_I$ is naturally equivalent to $\mathbb{L}\Lambda_I$ (\cite[Lemma 2.5]{Pos}) and, since $\Der_{\mathcal{T}_I}(\ModR)$ and $\Der_{\mathcal{C}_I}(\ModR)$ are always equivalent (cf.\ the recollement~\eqref{E:recollement} or \cite[Theorem 3.4]{Pos}), weak proregularity also implies that: 
	$$\Der(\mathcal{T}_I) \simeq \Der(\mathcal{C}_I).$$
	The latter statement is known as the \emph{Matlis-Greenlees-May duality}, \cite{DG02,PSY,Pos}.

\subsection{Weak proregularity}
\label{subsec:weak-proregularity}
	A classical result (\cite{EGA3}) says that, over a commutative noetherian ring, the local cohomology coincides with the \v{C}ech cohomology. The dual result for the left derived completion functor and \v{C}ech homology (\cite{GM,PSY,Schenz,AJL}) is a much more recent development. However, over a general commutative ring, the local and \v{C}ech (co)homologies need not be the same, despite the fact that we can use either of them in Theorem~\ref{T:mainthm3} to classify (co)tilting classes. Here we gather relevant results to understand the issue.
	
\begin{definition}[{\cite[\S2]{Schenz}}] ~
			\begin{enumerate}
				\item An inverse system $(M_i, f_{ji} \mid j \geq i)$ of modules is \emph{pro-zero} if for every $i$ there is $j \geq i$ such that $f_{ji}$ is zero.
				\item Let $R$ be $\mathbf{x}=(x_1,x_2,\ldots,x_n)$ be a sequence of elements of a commutative ring $R$. We say that $\mathbf{x}$ is \emph{weakly proregular} if the inverse system $(H_i(\mathbf{x}^j;R) \mid j>0)$ (see \S\ref{subsec:loccohom}) is pro-zero for each $i>0$.
			\end{enumerate}
	\end{definition}
\begin{fact}(\cite[Corollary 6.2]{PSY})
		The weak proregularity of $\mathbf{x}$ depends only on the ideal $I$ generated by $\mathbf{x}$ (in fact only on $\sqrt{I}$). This legitimizes us to define a finitely generated ideal $I$ to be \emph{weakly proregular}, if any of its finite generating sequences is weakly proregular.
\end{fact}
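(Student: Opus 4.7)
The plan is to prove both invariance claims by reformulating weak proregularity entirely in terms of a property of the derived functors $\mathbb{R}\Gamma_I$ and $\check{C}^\bullet(I)\otimes_R^{\mathbb{L}}-$, both of which manifestly depend only on $\sqrt{I}$ for finitely generated $I$.

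First, I would show that weak proregularity of $\mathbf{x}$ is equivalent to the assertion that the natural comparison morphism
$$\varphi\colon \mathbb{R}\Gamma_I(-) \longrightarrow \check{C}^\bullet(I)\otimes_R -$$
is a quasi-isomorphism in $\Der(\ModR)$ (where $I = (\mathbf{x})$). Unravelling the representations $\check{C}^\bullet(I) \cong \varinjlim_j K_\bullet(\mathbf{x}^j)^*$ and $\mathbb{R}\Gamma_I(M) \cong \varinjlim_j \mathbb{R}\Hom_R(R/I_j, M)$, the map $\varphi$ is the colimit of the comparison maps $q^i_M(j)\colon \ext_R^i(R/I_j, M) \to H^i(\mathbf{x}^j; M)$ discussed in \S\ref{subsec:Koszul-Ext} (cf.\ \eqref{E:Koszul-compare}). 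By Proposition~\ref{P:Koszul-ext}, each $q^i_M(j)$ becomes an isomorphism once the appropriate lower Ext-groups vanish; a Mittag-Leffler / $\varprojlim^1$ argument completely analogous to the proofs of Lemmas~\ref{lem:Cech-cohom} and~\ref{lem:Cech-hom}, now fed into the Greenlees--May sequence \cite[Proposition 1.1]{GM}, shows that the pro-zero condition on $\bigl(H_i(\mathbf{x}^j; R)\bigr)_{j>0}$ for all $i>0$ is precisely what forces the obstructing $\varprojlim^1$ terms in that sequence to vanish for every test module. Since both $\mathbb{R}\Gamma_I$ and $\check{C}^\bullet(I)\otimes_R^{\mathbb{L}}-$ are intrinsic to the ideal $I$ (the \v{C}ech side by Lemma~\ref{lem:quasi-iso-Cech}), the first invariance assertion follows.

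Second, for two finitely generated ideals $I,J$ with $\sqrt{I}=\sqrt{J}$, each generator of $I$ has some power landing in $J$ and vice versa; finite generation then yields an exponent $N\ge 1$ with $I^N\subseteq J$ and $J^N\subseteq I$. Hence $\Gamma_I=\Gamma_J$ as endofunctors of $\ModR$, so $\mathbb{R}\Gamma_I\simeq\mathbb{R}\Gamma_J$. Similarly, applying Lemma~\ref{lem:quasi-iso-Cech} to the pooled generating sequence of $I+J$ (which again has radical $\sqrt{I}$) gives $\check{C}^\bullet(I)\simeq\check{C}^\bullet(J)$ in $\Der(\ModR)$. Combining the two steps, the equivalent characterization of weak proregularity via $\varphi$ carries across from $I$ to $J$, and one concludes.

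The principal obstacle is the \emph{only if} direction of the first step: extracting the pro-zero condition on Koszul homologies of $R$ from the abstract quasi-isomorphism $\varphi\simeq \mathrm{id}$. The converse direction is a fairly direct Mittag-Leffler computation once one identifies $\ext$-groups with Koszul cohomology via Proposition~\ref{P:Koszul-ext}, but passing from a vanishing derived functor back to a vanishing pro-system requires a careful reduction to flat test modules (and ultimately to $R$ itself) together with delicate control of the $\varprojlim^1$ terms in \eqref{E: limone} and its local-cohomology analogue from \cite[Proposition 1.1]{GM}.
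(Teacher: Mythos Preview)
The paper does not prove this Fact; it is stated with a bare citation to \cite[Corollary 6.2]{PSY}. Your overall strategy---characterize weak proregularity of a sequence $\mathbf{x}$ by a condition that is manifestly intrinsic to $I$ (or to $\sqrt{I}$), and conclude---is correct and is in fact the route taken in the cited references. Concretely, one shows that weak proregularity of $\mathbf{x}$ is equivalent to the comparison map $\mathbb{R}\Gamma_I \to \check{C}^\bullet(I)\otimes_R -$ being a quasi-isomorphism (this is Theorem~\ref{T:wp-cohom}(a)$\Leftrightarrow$(b), due to Schenzel), and both sides of this map depend only on $\sqrt{I}$.

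Two points on the execution. First, your sketch conflates the cohomology and homology pictures. The comparison $\varphi\colon \mathbb{R}\Gamma_I \to \check{C}^\bullet(I)\otimes_R -$ lives entirely in the world of \emph{direct} limits: $\mathbb{R}^i\Gamma_I(M) \cong \varinjlim_j \ext_R^i(R/I^j,M)$ and $\check{H}^i(I;M)\cong\varinjlim_j H^i(\mathbf{x}^j;M)$ by \eqref{E:ltensor}. No $\varprojlim^1$ terms appear here, and the Greenlees--May sequence \cite[Proposition 1.1]{GM} concerns the \emph{completion} functor $\mathbb{L}\Lambda_I$, not $\mathbb{R}\Gamma_I$; it is not the tool for establishing that $\varphi$ is a quasi-isomorphism. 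The ``only if'' direction you flag as the obstacle is handled in \cite{Schenz} by testing on \emph{injective} (not flat) modules: for injective $E$ one has $\mathbb{R}^i\Gamma_I(E)=0$ for $i>0$, and exactness of $\Hom_R(-,E)$ gives $H^i(\mathbf{x}^j;E)\cong \Hom_R(H_i(\mathbf{x}^j;R),E)$, from which the pro-zero condition on $(H_i(\mathbf{x}^j;R))_j$ is extracted by a cogenerator argument. Your mention of ``flat test modules'' and the GM sequence suggests you are implicitly switching to the characterization via $\mathbb{L}\Lambda_I$ (Theorem~\ref{T:wp-cohom}(c)); either route works, but they should not be mixed.

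Second, your invocation of Lemma~\ref{lem:quasi-iso-Cech} for the $\sqrt{I}$-invariance of the \v{C}ech complex is not quite right as stated: that lemma only compares two generating sequences of the \emph{same} ideal, whereas $I+J$ need not equal $I$ or $J$ even when $\sqrt{I}=\sqrt{J}$. The clean fix is to observe that if $y\in\sqrt{I}$ then $\check{C}^\bullet(I)\otimes_R\check{C}^\bullet(y)\simeq\check{C}^\bullet(I)$ in $\Der(\ModR)$, since the cohomology of $\check{C}^\bullet(I)$ is $I$-torsion and hence $y$-torsion; iterating over the generators of $J$ then yields $\check{C}^\bullet(I)\simeq\check{C}^\bullet(I)\otimes_R\check{C}^\bullet(J)\simeq\check{C}^\bullet(J)$.
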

	If $R$ is noetherian, then any ideal is weakly proregular (\cite[Theorem 4.34]{PSY}). On the other hand, over a general commutative rings, there can easily be non-weakly proregular finitely generated ideals (\cite[Example 1.4]{GM}). It turns out that this property characterizes precisely when the local (co)homology of an ideal coincides with the \v{C}ech (co)homology. The result on the side of cohomology is \cite[Theorem 3.2]{Schenz}. In the same paper, the homology analog is proved under the extra assumption that each element of the generator sequence forms itself a one-element weakly proregular sequence (\cite[Theorem 4.5]{Schenz}). This extra assumption was removed in \cite{PSY}.
	\begin{thm}
			\label{T:wp-cohom}
			Let $R$ be a commutative ring and $I$ a finitely generated ideal. Then the following are equivalent:
			\begin{enumerate}
				\item[(a)] $I$ is weakly proregular,
				\item[(b)] the functorial map $$\mathbb{R}\Gamma_I(M) \rightarrow \check{C}^\bullet(I) \otimes_R M$$ is a quasi-isomorphism for each module $M$,
				\item[(c)] the functorial map $$\mathbb{R}\Hom_R(\check{C}^\bullet(I),-) \rightarrow \mathbb{L}_i\Lambda_I(M)$$ is a quasi-isomorphism for each module $M$.
			\end{enumerate}
	\end{thm}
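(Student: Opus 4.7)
My plan is to give a direct proof via the Koszul/\v{C}ech presentations, closely following the strategy of \cite{Schenz,PSY}. First, I would unpack the natural transformations appearing in (b) and (c) using the formula $\check{C}^\bullet(I) = \varinjlim_{j>0} K_\bullet(\mathbf{x}^j)^*$ established in \S\ref{subsec:loccohom}. From the map $q\colon K_\bullet(\mathbf{x}^j) \to R/I_j$ of \eqref{E:Koszul-derived} and the fact that each Koszul complex $K_\bullet(\mathbf{x}^j)$ consists of finitely generated free modules, one gets natural maps
\[ \mathbb{R}\Hom_R(R/I_j, M) \longrightarrow K_\bullet(\mathbf{x}^j)^* \otimes_R M \qquad \text{and} \qquad K_\bullet(\mathbf{x}^j)^* \otimes_R M \longrightarrow \mathbb{R}\Hom_R(K_\bullet(\mathbf{x}^j), M). \]
Using that the towers $\{I^j\}$ and $\{I_j\}$ are mutually cofinal, taking $\varinjlim_j$ of the first, together with $\mathbb{R}\Gamma_I(M) \simeq \varinjlim_j \mathbb{R}\Hom_R(R/I^j, M)$, produces the comparison in (b). The comparison in (c) is constructed dually using the telescope complex $\Tel(\mathbf{x})$ introduced in \S\ref{subsec:loccohom}.

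For (a)$\Rightarrow$(b), I would consider the triangle $K_\bullet(\mathbf{x}^j)_{\ge 1} \to K_\bullet(\mathbf{x}^j) \to R/I_j \to \Sigma K_\bullet(\mathbf{x}^j)_{\ge 1}$. Applying $\mathbb{R}\Hom_R(-,M)$ yields an error term whose cohomology is controlled by the pro-system $(H_i(\mathbf{x}^j; R))_j$ tensored with $M$ (up to pro-isomorphism). Weak proregularity precisely forces this pro-system to be pro-zero in positive degrees, so the error vanishes in the colimit and the comparison becomes a quasi-isomorphism. Conversely, (b)$\Rightarrow$(a) would be obtained by specializing $M = R$: the quasi-isomorphism $\mathbb{R}\Gamma_I(R) \xrightarrow{\sim} \check{C}^\bullet(I)$ combined with the limit formula $\check{H}^i(I;R) = \varinjlim_j H^i(\mathbf{x}^j;R)$ and the self-duality $H^i(\mathbf{x}^j;R) \cong H_{n-i}(\mathbf{x}^j;R)$ of Koszul (co)homology of length-$n$ sequences lets us extract the pro-zero condition.

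For (a)$\Leftrightarrow$(c), the argument is essentially dual but technically more delicate because $\Lambda_I$ is neither left nor right exact. I would exploit the formula from the proof of Lemma~\ref{L:local-hom-tilt}, namely the short exact sequence of \cite[Proposition 1.1]{GM}, which expresses $\mathbb{L}_n\Lambda_I(M)$ as an extension of $\varprojlim_j \tor_n^R(R/I^j,M)$ by $\varprojlim_j{}^1 \tor_{n+1}^R(R/I^j,M)$. On the other side, the computation of $\check{H}_i(I;M)$ carried out in \eqref{E: limone} has the same shape but in terms of $H_i(\mathbf{x}^j;M)$ rather than $\tor^R_i(R/I^j,M)$. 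Under weak proregularity the comparison maps $H_i(\mathbf{x}^j;M) \to \tor_i^R(R/I^j,M)$ fit into a map of pro-systems whose cones are pro-zero, so both the $\varprojlim$- and $\varprojlim^1$-terms agree; this is exactly the form of the proof in \cite[Theorem 4.5]{Schenz} and its strengthening in \cite[Theorem~5.21]{PSY}.

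The main obstacle is the (a)$\Leftrightarrow$(c) equivalence: taking $\varprojlim$ fails to be exact, so one has to track the Mittag-Leffler behaviour of the towers $(H_i(\mathbf{x}^j;M))_j$ and $(\tor_i^R(R/I^j,M))_j$ simultaneously and match the resulting $\varprojlim^1$ contributions. This is exactly the point where Schenzel originally needed an extra one-element weak proregularity hypothesis, removed later in \cite{PSY} by a more refined analysis of the telescope complex, and I would ultimately cite \cite[\S5]{PSY} for that step rather than reproduce it here.
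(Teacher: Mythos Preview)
Your overall strategy mirrors the sources the paper cites (Schenzel \cite{Schenz} and Porta--Shaul--Yekutieli \cite{PSY}), and in fact the paper itself does not give a self-contained argument: it simply observes that (a)$\Leftrightarrow$(b) is a restatement of \cite[Theorem 3.2]{Schenz}, that (a)$\Rightarrow$(c) is \cite[Corollary 5.25]{PSY}, and that (c)$\Rightarrow$(a) can be read off from the proof of \cite[Theorem 4.5]{Schenz} once one notices that the ``bounded torsion'' hypothesis there is not used in the relevant implications. Your proposal is more ambitious in that you try to sketch the actual mechanism, which is fine, but one step is genuinely wrong.

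The gap is in your argument for (b)$\Rightarrow$(a). Specializing to $M=R$ is not enough to recover weak proregularity. Knowing that $\mathbb{R}\Gamma_I(R)\to\check{C}^\bullet(I)$ is a quasi-isomorphism tells you that $\mathbb{R}^i\Gamma_I(R)\cong\varinjlim_j H^i(\mathbf{x}^j;R)$, but there is no reason for $\mathbb{R}^i\Gamma_I(R)$ to vanish, and in any case the Koszul self-duality you invoke relates a \emph{direct} system of cohomologies to the homologies $H_{n-i}(\mathbf{x}^j;R)$, whereas weak proregularity is a statement about the \emph{inverse} system $(H_i(\mathbf{x}^j;R))_{j>0}$ being pro-zero. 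These are unrelated conditions in general. The correct argument (as in \cite[Theorem 3.2]{Schenz}) tests (b) on \emph{injective} modules $E$: then $\mathbb{R}^i\Gamma_I(E)=0$ for $i>0$, so $\check{H}^i(I;E)=0$, and since $\Hom_R(-,E)$ is exact one has $H^i(\mathbf{x}^j;E)\cong\Hom_R(H_i(\mathbf{x}^j;R),E)$; taking $E$ to be an injective cogenerator and passing to the colimit then forces the inverse system $(H_i(\mathbf{x}^j;R))_j$ to be pro-zero. Without testing on injectives you cannot extract the pro-zero condition.

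Your sketch for (a)$\Rightarrow$(b) is in the right spirit but imprecise: the ``error term'' $\mathbb{R}\Hom_R(K_\bullet(\mathbf{x}^j)_{\ge 1},M)$ is not literally the pro-system tensored with $M$; rather, one reduces to injective $M$ (both sides being way-out functors) and then uses exactness of $\Hom_R(-,M)$ to identify the relevant cohomology with $\Hom_R(H_i(\mathbf{x}^j;R),M)$, which vanishes in the colimit by pro-zeroness. Your treatment of (a)$\Leftrightarrow$(c) is reasonable and your honest deferral to \cite[\S5]{PSY} for the removal of Schenzel's extra hypothesis matches what the paper does.
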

	\begin{proof}
			Although all the difficult steps have already been carried out by the aforementioned authors, we need to make a few explanations to establish the full equivalence in this form.

			The equivalence $(a) \leftrightarrow (b)$ is a slight reformulation of the one in \cite{Schenz}---our statement is a weakened version of statement (iii) of \cite[Theorem 3.2]{Schenz}, which easily implies (ii), and thus also (i) by the proof.

			The equivalence $(a) \leftrightarrow (c)$ is explained as follows. The implication $(a) \rightarrow (c)$ is proved in \cite[Corollary 5.25]{PSY}. The converse implication follows from the proof of \cite[Theorem 4.5]{Schenz}. Indeed, both the implications $(iii) \rightarrow (iv)$ and $(iv) \rightarrow (i)$ of \cite[Theorem 4.5]{Schenz} do not use (or need) the assumption of ``bounded torsion'' imposed in the statement of \cite[Theorem 4.5]{Schenz}. 
	\end{proof}
	Therefore, any example of a non-weakly proregular ideal $I$ (such as \cite[Example 1.4]{GM}) gives a situation where both the local cohomology and homology are not computed via the \v{C}ech complex. Indeed, not only the functorial map from Theorem~\ref{T:wp-cohom} fails to be a quasi-isomorphism, but inspecting the proofs in \cite{Schenz}, some flat (injective) module has a non-zero higher \v{C}ech homology (cohomology), but the local homology (cohomology) will vanish, respectively.

\section{Construction of the corresponding cotilting modules}
\label{sec:construct}
In this section we construct to each cotilting class of cofinite type over a commutative ring a cotilting module inducing it. The construction generalizes ideas from \cite{HST}.
\begin{lem}
	\label{L98}
	Let $R$ be a ring and $\mathcal{C}$ a $n$-cotilting class in $\RMod$. Suppose that $C$ is a left $R$-module satisfying ${}^\perp C = \mathcal{C}$, $C \in \mathcal{C}$, and $\mathcal{C} \subseteq \cogen(C)$. Then $C$ is a cotilting module.
\end{lem}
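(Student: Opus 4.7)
The plan is to verify the three axioms (C1), (C2), (C3) directly for $C$. Condition (C2) is immediate: since $C \in \mathcal{C}$ and any cotilting class is closed under products, $C^X \in \mathcal{C} = {}^\perp C$ for every set $X$, which says exactly that $\ext_R^i(C^X, C) = 0$ for all $i \geq 1$.

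For (C1), I fix an $n$-cotilting module $C'$ inducing $\mathcal{C}$, which exists by the hypothesis that $\mathcal{C}$ is $n$-cotilting. Since ${}^\perp C = \mathcal{C} = {}^\perp C'$, dimension shifting gives $\ext_R^i(\Omega^n M, C') \cong \ext_R^{n+i}(M, C')$ for every module $M$ and every $i \geq 1$, and these vanish because $\id C' \leq n$. Hence $\Omega^n M \in {}^\perp C' = {}^\perp C$, so $\ext_R^{n+1}(M, C) \cong \ext_R^1(\Omega^n M, C) = 0$, giving $\id C \leq n$.

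The core work is (C3). Write $\mathcal{B} = \mathcal{C}^{\perp_1}$. Since $\mathcal{C} = {}^\perp C'$ is closed under syzygies (by dimension shift), the cotorsion pair $(\mathcal{C}, \mathcal{B})$ is hereditary, so $\mathcal{B} = \mathcal{C}^\perp$; it is also complete. The key step is the identification $\Prod(C) = \mathcal{B} \cap \mathcal{C}$. The inclusion $\Prod(C) \subseteq \mathcal{B} \cap \mathcal{C}$ follows from $C \in \mathcal{C}$ (given) and $C \in \mathcal{B}$ (because $\mathcal{C} = {}^\perp C$ means $\ext_R^i(\mathcal{C}, C) = 0$), using closure under products and summands. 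For the converse, take $M \in \mathcal{B} \cap \mathcal{C}$. Since $M \in \mathcal{C} \subseteq \cogen(C)$, the canonical map $\eta_M \colon M \to C^{\Hom_R(M,C)}$ is monic; let $Q$ be its cokernel. Applying $\Hom_R(-, C)$ and using (C2), one gets $\ext_R^i(Q, C) = 0$ for $i \geq 2$; for $i = 1$ the connecting map $\Hom_R(C^{\Hom_R(M,C)}, C) \to \Hom_R(M, C)$ is split surjective (each $\alpha \colon M \to C$ equals $\pi_\alpha \circ \eta_M$), which forces $\ext_R^1(Q, C) = 0$. Hence $Q \in {}^\perp C = \mathcal{C}$, and since $M \in \mathcal{B}$ we have $\ext_R^1(Q, M) = 0$, so the sequence $0 \to M \to C^{\Hom_R(M,C)} \to Q \to 0$ splits and $M \in \Prod(C)$.

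With this identity in place, (C3) is obtained by iteration. Pick any injective cogenerator $W$ of $\RMod$; then $W \in \mathcal{B}$ automatically. Applying the special $\mathcal{C}$-precover of the complete cotorsion pair $(\mathcal{C}, \mathcal{B})$ successively and splicing produces an exact sequence
$$0 \to D_n \to A_{n-1} \to \cdots \to A_0 \to W \to 0$$
with each $A_i \in \mathcal{C}$ and each intermediate kernel $D_k \in \mathcal{B}$. Since $\mathcal{B}$ is closed under extensions and $W, D_1, \ldots, D_n \in \mathcal{B}$, one obtains inductively that each $A_i \in \mathcal{B}$, so $A_i \in \mathcal{B} \cap \mathcal{C} = \Prod(C)$. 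Because each $A_i \in \mathcal{C} = {}^\perp C$, dimension shifting along this sequence gives $\ext_R^i(D_n, C) \cong \ext_R^{i+n}(W, C)$ for $i \geq 1$, which vanishes by $\id C \leq n$, so $D_n \in \mathcal{C}$ and hence $D_n \in \mathcal{B} \cap \mathcal{C} = \Prod(C)$. This completes (C3). The main obstacle is establishing $\Prod(C) = \mathcal{B} \cap \mathcal{C}$, whose harder direction depends on the canonical embedding $\eta_M$ combined with (C2) to force the cokernel $Q$ into $\mathcal{C}$.
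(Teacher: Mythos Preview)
Your proof is correct but takes a different route from the paper. The paper invokes Bazzoni's characterization \cite[Theorem 3.11]{B}: a module $C$ is $n$-cotilting if and only if $\cogen_n(C) = {}^\perp C$. With that in hand, the paper only needs to show $\mathcal{C} \subseteq \cogen_n(C)$ (the other inclusion being automatic since $C \in \mathcal{C}$), and this follows by iterating the very same canonical-embedding argument you use: for $M \in \mathcal{C}$ the map $M \to C^{\Hom_R(M,C)}$ is monic with cokernel again in $\mathcal{C}$. You instead verify (C1)--(C3) directly, the substantial part being the identification $\Prod(C) = \mathcal{B} \cap \mathcal{C}$ for $\mathcal{B} = \mathcal{C}^{\perp_1}$, after which (C3) is assembled from iterated special $\mathcal{C}$-precovers and the bound $\id C \le n$ obtained in (C1). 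The shared technical heart is the cokernel-in-$\mathcal{C}$ step; the paper's version is shorter because it outsources the packaging to Bazzoni's theorem, while your version is more self-contained and makes the cotorsion-pair structure explicit.
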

\begin{proof}
		We prove that $\cogen_{n} (C) = \mathcal{C}$, which implies that $C$ is a $n$-cotilting module by \cite[Theorem 3.11]{B}. Since $\mathcal{C}$ is a $n$-cotilting class and $C \in \mathcal{C}$, we have inclusion $\cogen_{n} (C) \subseteq \mathcal{C}$. To show the other inclusion, let $M \in \mathcal{C}$, put $I=\Hom_R(M,C)$, and let $\varphi: M \rightarrow C^I$ be the coevaluation map. Since $M \in \cogen(C)$, we have that the map $\varphi$ is injective. Applying $\Hom_R(-,C)$ onto the exact sequence $0 \rightarrow M \xrightarrow{\varphi} C^I \rightarrow X \rightarrow 0$ yields
	$$\Hom{}_R(C^I,C) \xrightarrow{\Hom_R(\varphi,C)} \Hom{}_R(M,C) \rightarrow \ext{}_R^1(X, C) \rightarrow \ext{}_R^1(C^I,C) = 0.$$
	As $\Hom_R(\varphi,C)$ is clearly surjective, we have that $\ext_R^1(X, C)=0$. Using the fact that $C^I,M \in \mathcal{C} = {}^\perp C$, we infer that $X \in \mathcal{C} = {}^\perp C$. Repeating this argument shows that indeed $M \in \cogen_{n} (C)$. 
\end{proof}
\begin{cor}
		Let $R$ be a commutative ring and $C$ a cotilting module such that the induced cotilting class $\mathcal{C}={}^\perp C$ is of cofinite type. Let $\mathfrak{S}$ be the characteristic sequence such that $\mathcal{C}=\mathcal{C}(\mathfrak{S})$. Let $W_j$ be an injective module such that $\cogen(W_j)=\mathcal{F}_j(\mathfrak{S})$. Then $\Omega^{-j}(C) \oplus W_j$ is a cotilting module inducing the cotilting class $\mathcal{C}_{(j)}$.
\end{cor}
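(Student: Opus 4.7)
The plan is to apply Lemma~\ref{L98} to the class $\mathcal{C}_{(j)}$ (which, by the notation preceding Section~\ref{sec:cotilt}, is already known to be an $(n-j)$-cotilting class) and to the candidate module $D := \Omega^{-j}(C) \oplus W_j$. That lemma reduces the claim to verifying three conditions: (a) ${}^\perp D = \mathcal{C}_{(j)}$, (b) $D \in \mathcal{C}_{(j)}$, and (c) $\mathcal{C}_{(j)} \subseteq \cogen(D)$.

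For (a), I would split ${}^\perp D = {}^\perp(\Omega^{-j}C) \cap {}^\perp W_j$. The first factor equals $\mathcal{C}_{(j)}$ by the very definition of $\mathcal{C}_{(j)}$, while the second factor equals $\ModR$ because $W_j$ is injective. For (b) and (c) I would invoke Lemma~\ref{L31}(ii), which identifies $\mathcal{C}_{(j)}$ with $\mathcal{C}((\mathcal{F}_j,\mathcal{F}_{j+1},\ldots,\mathcal{F}_{n-1}))$. Membership of $\Omega^{-j}C$ in $\mathcal{C}_{(j)}$ then amounts to checking $\Omega^{-(i+j)}(C) \in \mathcal{F}_{i+j}$ for $i=0,\ldots,n-j-1$, which is immediate from $C \in \mathcal{C}(\mathfrak{S})$; membership of $W_j$ collapses to $W_j \in \mathcal{F}_j$ because higher cosyzygies of an injective vanish, and this holds by the hypothesis $\cogen(W_j) = \mathcal{F}_j(\mathfrak{S})$. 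This takes care of (b). For (c), the same description of $\mathcal{C}_{(j)}$ yields $\mathcal{C}_{(j)} \subseteq \mathcal{F}_j(\mathfrak{S}) = \cogen(W_j) \subseteq \cogen(D)$, so the inclusion is automatic.

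Once (a)--(c) are established, Lemma~\ref{L98} concludes that $D$ is a cotilting module, and since the lemma also yields $\cogen_n(D) = \mathcal{C}_{(j)}$, the induced cotilting class is precisely $\mathcal{C}_{(j)}$. I do not foresee any real obstacle: the whole point of adjoining the injective summand $W_j$ is that it contributes trivially to ${}^\perp D$ (thereby preserving the identification of the left Ext-perp class with $\mathcal{C}_{(j)}$) while simultaneously providing enough cogeneration to cover all of $\mathcal{F}_j$, which is exactly what Lemma~\ref{L98} asks for but $\Omega^{-j}C$ alone does not obviously supply.
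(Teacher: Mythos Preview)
Your proposal is correct and follows essentially the same route as the paper's proof: both verify the three hypotheses of Lemma~\ref{L98} for $C' = \Omega^{-j}C \oplus W_j$, using Lemma~\ref{L31}(ii) to identify $\mathcal{C}_{(j)}$ with $\mathcal{C}((\mathcal{F}_j,\ldots,\mathcal{F}_{n-1}))$ for the membership and cogeneration checks. Your write-up is simply more explicit about why each of (a)--(c) holds, whereas the paper compresses the same reasoning into a single sentence.
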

\begin{proof}
		Put $C'=\Omega^{-j} C \oplus W_j$. We clearly have ${}^\perp C' = \mathcal{C}_{(j)}$, Lemma~\ref{L31} gives $C' \in \mathcal{C}_{(j)}$, and $\mathcal{C}_{(j)} \subseteq \mathcal{F}_j(\mathfrak{S}) = \cogen(C')$. Therefore, Lemma~\ref{L98} implies that $C'$ is a cotilting module.
\end{proof}
In the rest of the section let $R$ be a commutative ring and let $\mathfrak{S}=(\mathcal{F}_0,\mathcal{F}_1,\ldots,\mathcal{F}_{n-1})$ be a characteristic sequence. Our goal is to construct a cotilting module $C(\mathfrak{S})$ such that ${}^\perp C(\mathfrak{S})=\mathcal{C}(\mathfrak{S})$.
\begin{construction}
		\label{C99}
	We aim to construct a (co)complex of injective modules
	$$0 \rightarrow E^0 \xrightarrow{\psi_0} E^1 \xrightarrow{\psi_1} \cdots \xrightarrow{\psi_{n-1}} E^n \xrightarrow{\psi_n} 0 (=E^{n+1}),$$
	where $E^i$ is in cohomological degree $i$, satisfying the following properties:
	\begin{enumerate}
		\item[(i)] the cohomology of the complex vanishes with the exception of degree 0,
		\item[(ii)] for each $i=0,1,\ldots,n$, the kernel $C^i$ of $\psi_i$ is a $(n-i)$-cotilting module such that ${}^\perp C^i=\mathcal{C}_{(i)}$.
	\end{enumerate}
	We construct the complex by backwards induction on $i=n,n-1,\ldots,1,0$. For the step $i=n$, we let $E^n$ be an injective cogenerator of $\ModR$, and $\psi_n$ be the zero map.

	Suppose that we have already constructed the complex down to degree $k+1$ for some $0\le k<n$ so that it is exact in degrees $>k+1$ and satisfies (ii). By \cite[Theorem 15.9]{GT}, there is a $\mathcal{C}_{(k)}$-cover $f\colon F^k \rightarrow C^{k+1}$ of $C^{k+1}$. 
	\begin{lem}
		\label{L41}
		The module $F^k$ is injective.
	\end{lem}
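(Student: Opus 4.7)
The strategy is to show that the injective envelope map $\iota\colon F^k \to E(F^k)$ splits, which forces $F^k$ to be a direct summand of an injective and hence itself injective. The single nontrivial input is to locate the cosyzygy $E(F^k)/F^k$ in an appropriate perpendicular class, after which the covering property does the rest.

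First, note that $\mathcal{C}_{(k)}$ is itself an $(n-k)$-cotilting class of cofinite type (as observed in the proof of Lemma~\ref{L02}), so by Proposition~\ref{P01} it is closed under injective envelopes. In particular $E(F^k) \in \mathcal{C}_{(k)}$. Since by inductive hypothesis $\mathcal{C}_{(k+1)} = {}^{\perp} C^{k+1}$, and $\mathcal{C}_{(k)} = \mathcal{C}((\mathcal{F}_k,\mathcal{F}_{k+1},\ldots,\mathcal{F}_{n-1}))$ by Lemma~\ref{L31}(ii), the recursive characterization of Lemma~\ref{L01} applied to $\mathcal{C}_{(k)}$ and to $F^k \in \mathcal{C}_{(k)}$ yields
\[
\Omega^{-1}F^k \;=\; E(F^k)/F^k \;\in\; \mathcal{C}_{(k+1)} \;=\; {}^\perp C^{k+1}.
\]

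In particular $\ext_R^1(E(F^k)/F^k,\,C^{k+1}) = 0$. Applying $\Hom_R(-,C^{k+1})$ to the short exact sequence $0 \to F^k \xrightarrow{\iota} E(F^k) \to E(F^k)/F^k \to 0$ then shows that the restriction map $\Hom_R(E(F^k),C^{k+1}) \to \Hom_R(F^k,C^{k+1})$ is surjective, so $f$ extends to some $g\colon E(F^k) \to C^{k+1}$ with $g\iota = f$. Now use that $E(F^k) \in \mathcal{C}_{(k)}$ together with the $\mathcal{C}_{(k)}$-precover property of $f$: there exists $h\colon E(F^k) \to F^k$ with $fh = g$. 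Restricting to $F^k$ gives $f(h\iota) = g\iota = f$, so by the minimality clause in the definition of a cover, $h\iota \in \End_R(F^k)$ is an automorphism. Thus $\iota$ is split monic and $F^k$ is a direct summand of the injective module $E(F^k)$, hence injective.

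The pivotal step is the identification $E(F^k)/F^k \in \mathcal{C}_{(k+1)}$; once that is in place the rest is a routine diagram chase combining the lifting and minimality aspects of covers. The potential obstacle would be if $\mathcal{C}_{(k)}$ failed to be closed under injective envelopes, but this is precisely what Proposition~\ref{P01} guarantees for cofinite-type cotilting classes, and it is exactly the reason the argument works only in the commutative setting.
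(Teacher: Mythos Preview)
Your proof is correct and follows essentially the same approach as the paper's. Both arguments hinge on the same two ingredients: (i) the cosyzygy $\Omega^{-1}F^k$ lies in $\mathcal{C}_{(k+1)}={}^\perp C^{k+1}$, so the cover $f$ extends over $E(F^k)$; and (ii) $E(F^k)\in\mathcal{C}_{(k)}$ by Proposition~\ref{P01}, so this extension is itself a $\mathcal{C}_{(k)}$-precover, whence the minimality of $f$ forces the injective envelope to split. The paper obtains (i) directly from Lemma~\ref{L31}(ii), whereas you route through Lemma~\ref{L01}; either works, and your explicit unwinding of the cover-versus-precover splitting is just a spelled-out version of the paper's final sentence.
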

	\begin{proof}
			Because $F^k \in \mathcal{C}_{(k)}$, we have by Lemma~\ref{L31}(ii) that $\Omega^{-1}F^k \in \mathcal{C}_{(k+1)}$. By the inductive premise, $C^{k+1}$ is a cotilting module such that ${}^\perp C^{k+1}=\mathcal{C}_{(k+1)}$, and therefore $C^{k+1} \in \mathcal{C}_{(k+1)}^\perp$. It follows that the cover $f\colon F^k \rightarrow C^{k+1}$ can be extended to a map $f'\colon E(F^k) \rightarrow C^{k+1}$. As $E(F^k) \in \mathcal{C}_{(k)}$ by Proposition~\ref{P01}, it can be easily seen that $f'$ is an $\mathcal{C}_{(k)}$-precover of $C^{k+1}$. Therefore, $F^k$ is a direct summand of $E(F^k)$, proving that $F^k$ is injective.
	\end{proof}
	Now we let $E^k = F^k \oplus W^k$, where $W^k$ is an injective module such that $\cogen(W^k)=\mathcal{F}_k$ (e.g. $W^k=\prod \{E(R/J) \mid R/J \in \mathcal{F}_k\}$). We define $\psi_k: E^k \rightarrow E^{k+1}$ by setting $\psi_k{\restriction F^k} = f$ and $\psi_k{\restriction W^k} = 0$. Since $W^k$ is injective and belongs to $\mathcal{F}_k$, it is in $\mathcal{C}_{(k)}$, and then it easily follows that $\psi_k: E^k \rightarrow C^{k+1}$ is a $\mathcal{C}_{(k)}$-precover of $C^{k+1}=\Ker(\psi_{k+1})=\im(\psi_k)$.

	\begin{lem}
			\label{L42}
			We have ${}^\perp C^k = \mathcal{C}_{(k)}$ and $C^k \in \mathcal{C}_{(k)}$.
	\end{lem}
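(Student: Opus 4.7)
The plan is to combine the short exact sequence $0\to C^k\to E^k\xrightarrow{\psi_k}C^{k+1}\to 0$ from the construction with the inductive hypothesis that $C^{k+1}$ is a cotilting module with ${}^\perp C^{k+1}=\mathcal{C}_{(k+1)}$ and $C^{k+1}\in\mathcal{C}_{(k+1)}$, together with the characterization---coming from iterated application of Lemmas~\ref{L01} and~\ref{L31}---that $M\in\mathcal{C}_{(k)}$ if and only if $M\in\mathcal{F}_k$ and $\Omega^{-1}M\in\mathcal{C}_{(k+1)}$.

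First I would verify $C^k\in\mathcal{C}_{(k)}$. Since $W^k\in\mathcal{F}_k$ by construction and $F^k\in\mathcal{C}_{(k)}\subseteq\mathcal{F}_k$, the hereditary torsion-free class $\mathcal{F}_k$ contains $E^k=F^k\oplus W^k$ and its submodule $C^k$; and as $E^k$ is injective, $E(C^k)$ splits off $E^k$ as an injective summand, so $C^{k+1}\cong\Omega^{-1}C^k\oplus J$ with $J$ injective, and closure of $\mathcal{C}_{(k+1)}$ under direct summands gives $\Omega^{-1}C^k\in\mathcal{C}_{(k+1)}$.

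Next I would establish $\mathcal{C}_{(k)}\subseteq{}^\perp C^k$ routinely. Applying $\Hom_R(M,-)$ to the short exact sequence and using the injectivity of $E^k$, for $M\in\mathcal{C}_{(k)}\subseteq\mathcal{C}_{(k+1)}={}^\perp C^{k+1}$ one has $\ext_R^i(M,C^k)\cong\ext_R^{i-1}(M,C^{k+1})=0$ for $i\ge 2$, while $\ext_R^1(M,C^k)=\Coker(\Hom_R(M,E^k)\to\Hom_R(M,C^{k+1}))$ vanishes because $\psi_k$ is a $\mathcal{C}_{(k)}$-precover of $C^{k+1}$.

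The main obstacle is the reverse inclusion ${}^\perp C^k\subseteq\mathcal{C}_{(k)}$. For $M\in{}^\perp C^k$, the same long exact sequence immediately yields $M\in\mathcal{C}_{(k+1)}$; moreover, from $\ext_R^1(M,C^k)=0$ every map $g\colon M\to C^{k+1}$ lifts through $\psi_k$ to some $\tilde g\colon M\to E^k$, and by injectivity of $E^k$ the lift extends to $E(M)\to E^k$, which then composes back via $\psi_k$ to an extension $E(M)\to C^{k+1}$ of $g$. Combined with $E(M)\in\mathcal{C}_{(k+1)}$ (Proposition~\ref{P01}), the long exact sequence for $0\to M\to E(M)\to\Omega^{-1}M\to 0$ then yields $\Omega^{-1}M\in{}^\perp C^{k+1}=\mathcal{C}_{(k+1)}$. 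The decisive remaining step is showing $M\in\mathcal{F}_k$, which I would prove by contradiction. Assume the $\mathcal{T}_k$-torsion submodule $T$ of $M$ is nonzero. Since $M\in\mathcal{F}_{k+1}=\cogen(W^{k+1})$ (with the convention $W^n:=E^n$ when $k=n-1$), also $T\in\cogen(W^{k+1})$, so there is a nonzero $\gamma\colon T\to W^{k+1}$; by injectivity of $W^{k+1}$ it extends to $\bar\gamma\colon M\to W^{k+1}\hookrightarrow C^{k+1}$ nonzero on $T$. The lifting established above forces $\bar\gamma=f\circ\tilde\gamma$ for some $\tilde\gamma\colon M\to F^k$, but $\tilde\gamma\restriction_T=0$ since $T\in\mathcal{T}_k$ and $F^k\in\mathcal{C}_{(k)}\subseteq\mathcal{F}_k$, contradicting $\bar\gamma\restriction_T\ne 0$. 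Combining $M\in\mathcal{F}_k$ with $\Omega^{-1}M\in\mathcal{C}_{(k+1)}$ then gives $M\in\mathcal{C}_{(k)}$ by the characterization recalled above.
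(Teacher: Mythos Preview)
Your proof is correct and follows essentially the same approach as the paper: both use the short exact sequence $0\to C^k\to E^k\to C^{k+1}\to 0$, the $\mathcal{C}_{(k)}$-precover property of $\psi_k$ for the inclusion $\mathcal{C}_{(k)}\subseteq{}^\perp C^k$, and the cogenerating summand $W^{k+1}\subseteq C^{k+1}$ to force $M\in\mathcal{F}_k$. The only organizational difference is that the paper argues by contrapositive, first disposing of the case $M\notin\mathcal{F}_k$ so that afterwards it can assume $E(M)\in\mathcal{C}_{(k)}$ and get $\Omega^{-1}M\in\mathcal{C}_{(k+1)}$ by a bare dimension shift, whereas you work directly and establish $\Omega^{-1}M\in\mathcal{C}_{(k+1)}$ before knowing $M\in\mathcal{F}_k$, which requires your extra step of lifting maps $M\to C^{k+1}$ through $E^k$ and then extending them over $E(M)$ (together with Proposition~\ref{P01} for $E(M)\in\mathcal{C}_{(k+1)}$).
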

	\begin{proof}
	By the inductive premise of the construction, we know ${}^\perp \Omega^{-1} C^k = \mathcal{C}_{(k+1)}$, and thus $\bigcap_{j>1} \Ker \ext_R^j(-,C^k) = \mathcal{C}_{(k+1)}$. Let $M \in \mathcal{C}_{(k+1)}$. Then $\ext_R^1(M,C^k)=0$ if and only if any map $g\colon M \rightarrow C^{k+1}$ can be factorized through $\psi_k\colon E^k \rightarrow C^{k+1}$. If $M \in \mathcal{C}_{(k)}$, then this factorization is always possible, because $\psi_k$ is an $\mathcal{C}_{(k)}$-precover. This proves that $\mathcal{C}_{(k)} \subseteq {}^\perp C^k$.
	
	For the converse inclusion, suppose that $M \not\in \mathcal{C}_{(k)}$. Consider first the case where even $M \not\in \mathcal{F}_k$ and let $T$ be the torsion part of $M$ with respect to the torsion pair $(\mathcal{T}_k,\mathcal{F}_k)$.
	By construction, $C^{k+1}$ has an injective direct summand $W^{k+1}$ which cogenerates the torsion-free class $\mathcal{F}_{k+1}$.
	Since $M$ and also $T$ belong to $\mathcal{F}_{k+1}$, there is a non-zero map $T \rightarrow W^{k+1}$, which extends to a map $g\colon M \rightarrow W^{k+1} \subseteq C^{k+1}$. Such map does not factor through $\psi_k$ since $\Hom_R(T,E^k)=0$. The remaining case is where $M \in \mathcal{F}_k$, but $\Omega^{-1} M \not\in \mathcal{C}_{(k+1)}$. Consider the long exact sequence obtained by applying $\Hom_R(-,C^k)$ to 
	$$0 \rightarrow M \rightarrow E(M) \rightarrow \Omega^{-1} M \rightarrow 0.$$
	Since $M \in \mathcal{F}_k$, we have $E(M) \in \mathcal{C}_{(k)}$, and thus we obtain isomorphisms $\ext_R^i(M,C^k) \simeq \ext_R^{i+1}(\Omega^{-1} M,C^k)$ for all $i>0$. Hence, if $\Omega^{-1}M \not\in \mathcal{C}_{(k+1)}$, then $M \not\in {}^\perp C^k$. This finishes the proof of ${}^\perp C^k = \mathcal{C}_{(k)}$.
	
	Finally, that $C^k \in \mathcal{C}_{(k)}$ follows from Lemma~\ref{L31} and from $\Omega^{-j} C^k$ being a direct summand of $C^{k+j}$ for each $j=1,2,\ldots,n-k-1$.
	\end{proof}
	\begin{lem}
			\label{L43}
			The module $C^k$ is an $(n-k)$-cotilting module.
	\end{lem}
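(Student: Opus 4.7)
The plan is to deduce Lemma~\ref{L43} from Lemma~\ref{L98} applied to $C = C^k$ with respect to the $(n-k)$-cotilting class $\mathcal{C}_{(k)}$. Two of the three hypotheses of Lemma~\ref{L98} — namely ${}^\perp C^k = \mathcal{C}_{(k)}$ and $C^k \in \mathcal{C}_{(k)}$ — have just been verified in Lemma~\ref{L42}, so the only thing left is the cogeneration property $\mathcal{C}_{(k)} \subseteq \cogen(C^k)$.

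For that, I would argue as follows. By Lemma~\ref{L31}(ii) we have $\mathcal{C}_{(k)} \subseteq \mathcal{F}_k$, and by construction $W^k$ is an injective module with $\cogen(W^k) = \mathcal{F}_k$. Hence every $M \in \mathcal{C}_{(k)}$ admits a monomorphism into a product of copies of $W^k$. Now $W^k$ sits inside $E^k = F^k \oplus W^k$ and by construction the map $\psi_k$ was defined to be zero on the summand $W^k$. Therefore $W^k \subseteq \Ker \psi_k = C^k$, and the embedding $M \hookrightarrow (W^k)^X$ factors through $(C^k)^X$. Thus $\mathcal{C}_{(k)} \subseteq \cogen(W^k) \subseteq \cogen(C^k)$, as required.

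Applying Lemma~\ref{L98} then yields that $C^k$ is a cotilting module; since the associated cotilting class is $\mathcal{C}_{(k)} = {}^\perp C^k$, which has already been identified as an $(n-k)$-cotilting class, the cotilting dimension is exactly $n-k$, completing the inductive step of Construction~\ref{C99}.

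The whole proof is essentially a two-line bookkeeping argument, and I do not foresee any real obstacle: the key substantive input (that $W^k$ cogenerates $\mathcal{F}_k$ and lies in $C^k$) was already baked into the construction precisely so that this final cogeneration check would go through, and the existence of ${}^\perp C^k$ and membership $C^k \in \mathcal{C}_{(k)}$ were exactly what Lemma~\ref{L42} was designed to establish.
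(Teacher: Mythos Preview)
Your proposal is correct and follows the same approach as the paper: apply Lemma~\ref{L98} using Lemma~\ref{L42} for the first two hypotheses and the inclusion $\mathcal{C}_{(k)} \subseteq \mathcal{F}_k = \cogen(W^k) \subseteq \cogen(C^k)$ for the cogeneration condition. The paper states the cogeneration property more tersely (asserting $\cogen(C^k)=\mathcal{F}_k$ outright), but the underlying reasoning is exactly what you spelled out.
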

	\begin{proof}
			Since $\cogen(C^k)=\mathcal{F}_k \supseteq \mathcal{C}_{(k)}$, and by Lemma~\ref{L42}, the module $C^k$ and the $(n-k)$-cotilting class $\mathcal{C}_{(k)}$ satisfy the hypothesis of Lemma~\ref{L98}. Therefore, $C^k$ is $(n-k)$-cotilting by that lemma.
	\end{proof}
	This concludes the inductive step, and hence also the construction.
	\begin{notation}
	We put $C(\mathfrak{S}) = \Ker(\psi_0)$, and conclude:
	\end{notation}
	\begin{thm} \label{thmconstruction}
			Let $R$ be a commutative ring. Then the set 
			$$\{C(\mathfrak{S}) \mid \mathfrak{S} \text{ a characteristic sequence of length $n$}\}$$ 
			parametrizes the equivalences classes of all $n$-cotilting modules of cofinite type in $\ModR$.
	 \end{thm}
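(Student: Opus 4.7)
The plan is to assemble the conclusion directly from the construction together with the classification already established in Theorem~\ref{T01}. First I would invoke the final step of the construction at index $k=0$: Lemma~\ref{L43} tells us that $C(\mathfrak{S})=C^0=\Ker(\psi_0)$ is an $n$-cotilting module, and Lemma~\ref{L42} gives ${}^\perp C(\mathfrak{S})=\mathcal{C}_{(0)}=\mathcal{C}(\mathfrak{S})$. By Lemma~\ref{L28}, $\mathcal{C}(\mathfrak{S})$ is an $n$-cotilting class of cofinite type, so $C(\mathfrak{S})$ represents an equivalence class of $n$-cotilting modules inducing a cotilting class of cofinite type.

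Next I would argue that the assignment $\mathfrak{S}\mapsto [C(\mathfrak{S})]$ is both injective and surjective onto the equivalence classes of $n$-cotilting modules of cofinite type. For injectivity, recall that two cotilting modules are equivalent by definition iff they induce the same cotilting class; if $[C(\mathfrak{S})]=[C(\mathfrak{S}')]$ then ${}^\perp C(\mathfrak{S})={}^\perp C(\mathfrak{S}')$, i.e.\ $\mathcal{C}(\mathfrak{S})=\mathcal{C}(\mathfrak{S}')$, and Theorem~\ref{T01} forces $\mathfrak{S}=\mathfrak{S}'$. For surjectivity, let $C$ be an arbitrary $n$-cotilting module such that ${}^\perp C$ is of cofinite type. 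By Theorem~\ref{T01}, there is a unique characteristic sequence $\mathfrak{S}$ with $\mathcal{C}(\mathfrak{S})={}^\perp C$; the module $C(\mathfrak{S})$ produced by Construction~\ref{C99} satisfies ${}^\perp C(\mathfrak{S})=\mathcal{C}(\mathfrak{S})={}^\perp C$, so $C$ and $C(\mathfrak{S})$ are equivalent cotilting modules.

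The only delicate issue in principle is making sure the construction is carried out for a \emph{cofinite-type} class (since we invoked Proposition~\ref{P01} implicitly through the construction), but this is already packaged in Lemmas~\ref{L41}--\ref{L43}: the covers $F^k\to C^{k+1}$ exist because $\mathcal{C}_{(k)}$ is a cotilting class (hence covering by \cite[Theorem~15.9]{GT}), and the closure under injective envelopes needed in Lemma~\ref{L41} holds precisely because all the classes $\mathcal{C}_{(k)}$ inherit cofinite type from $\mathcal{C}(\mathfrak{S})$ (as shown in the proof of Lemma~\ref{L02}) and Proposition~\ref{P01} applies. Hence the proof reduces to a one-paragraph amalgamation: each $C(\mathfrak{S})$ lies in the desired set by Lemmas~\ref{L42}--\ref{L43}, and the bijectivity of the parametrisation follows formally from Theorem~\ref{T01} together with the fact that equivalence of cotilting modules is synonymous with coincidence of the induced cotilting class.
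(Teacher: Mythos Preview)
Your proposal is correct and matches the paper's approach: the paper does not even write out a separate proof, simply stating the theorem as the conclusion of Construction~\ref{C99}, since by that point Lemmas~\ref{L42} and~\ref{L43} have established that each $C(\mathfrak{S})$ is an $n$-cotilting module with ${}^\perp C(\mathfrak{S})=\mathcal{C}(\mathfrak{S})$, and the bijectivity then follows immediately from Theorem~\ref{T01}. Your write-up makes explicit precisely the two-line argument the paper leaves implicit.
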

\end{construction}
\section{Examples of cotilting classes not of cofinite type}
\label{sec:notcofinite}
We conclude the paper by providing intriguing examples of cotilting classes which are not of cofinite type, but which are in some sense difficult to tell apart from classes of cofinite type. To explain the issue, we first give a characterization of cotilting classes of cofinite type which follows from the proof of Theorem~\ref{T01}.

\begin{thm}
		\label{thmcofinitetype}
		Let $R$ be a commutative ring and $\mathcal{C}$ an $n$-cotilting class in $\ModR$. Then $\mathcal{C}$ is of cofinite type if and only if $\mathcal{C}_{(i)}$ is closed under injective envelopes for all $i=0,1,\ldots,n-1$.
\end{thm}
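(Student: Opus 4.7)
The plan is to verify both implications by re-reading the proof of Lemma~\ref{L02}, relaxing its input assumption from ``cofinite type'' to ``closure under injective envelopes at every cosyzygy level.''

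For the forward direction, I would argue that if $\mathcal{C}=\mathcal{S}^\intercal$ for some set $\mathcal{S}$ of strongly finitely presented modules of bounded projective dimension, then iteratively $\mathcal{C}_{(i)} = \{\Omega^i S \mid S \in \mathcal{S}\}^\intercal$ (as already noted at the start of the proof of Lemma~\ref{L02}), so each $\mathcal{C}_{(i)}$ is again of cofinite type. Proposition~\ref{P01} then yields the claimed closure under injective envelopes at every level.

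For the reverse direction, assuming each $\mathcal{C}_{(i)}$ is closed under injective envelopes, the strategy is to construct a characteristic sequence $\mathfrak{S} = (\mathcal{F}_0,\ldots,\mathcal{F}_{n-1})$ with $\mathcal{C} = \mathcal{C}(\mathfrak{S})$, whence Lemma~\ref{L28} delivers the cofinite type. Concretely, I would apply Lemma~\ref{L01} to each $\mathcal{C}_{(i)}$ with $i=0,1,\ldots,n-2$ to extract hereditary torsion-free classes of finite type $\mathcal{F}_i$ satisfying $\mathcal{C}_{(i)} = \{M : M \in \mathcal{F}_i,\; \Omega^{-1}M \in \mathcal{C}_{(i+1)}\}$ — this is the crucial use of the new hypothesis in place of the appeal to Proposition~\ref{P01} that appeared in the original argument. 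I would then set $\mathcal{F}_{n-1} = \mathcal{C}_{(n-1)}$, which is hereditary torsion-free of finite type by one more application of Lemma~\ref{L01} to $\mathcal{C}_{(n-1)}$, noting $\mathcal{C}_{(n)} = \ModR$ (since $\Omega^{-n}C$ is injective for an $n$-cotilting module $C$). Unfolding the recursions yields $\mathcal{C} = \mathcal{C}(\mathfrak{S})$.

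It then remains to check that $\mathfrak{S}$ is characteristic. The chain $\mathcal{F}_0 \subseteq \cdots \subseteq \mathcal{F}_{n-1}$ follows from the inclusions $\mathcal{C}_{(i)} \subseteq \mathcal{C}_{(i+1)}$ (as $\mathcal{C}_{(i)}$ is cut out by vanishing of $\ext_R^k(-,C)$ for $k>i$) together with each $\mathcal{F}_i$ being the submodule closure of $\mathcal{C}_{(i)}$; and $\Omega^{-i}R \in \mathcal{F}_i$ reduces to $\ext_R^k(\Omega^{-i}R,C) \cong \ext_R^{k-i}(R,C) = 0$ for $k>i$, placing $\Omega^{-i}R$ already in $\mathcal{C}_{(i)}$. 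I do not foresee any genuine obstacle: Proposition~\ref{P01} was the single spot where cofinite type was used essentially inside the inductive argument of Lemma~\ref{L02}, and we are bypassing it by directly assuming its conclusion at each cosyzygy level.
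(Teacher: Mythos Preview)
Your proposal is correct and follows essentially the same route as the paper's own (very terse) proof, which simply cites Lemma~\ref{L01}, the proof of Lemma~\ref{L02}, and Lemma~\ref{L28}. You have correctly identified that the only essential use of cofinite type in the proof of Lemma~\ref{L02} was to invoke Proposition~\ref{P01}, and that assuming its conclusion at each cosyzygy level suffices.

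There is one small slip: your justification of condition~(iii), namely $\Omega^{-i}R \in \mathcal{F}_i$, via the formula $\ext_R^k(\Omega^{-i}R,C) \cong \ext_R^{k-i}(R,C)$ is not valid as stated. Dimension shifting of this form applies to syzygies in the first variable (or cosyzygies in the second), not to cosyzygies in the first variable. The conclusion is nonetheless true, and the correct argument actually \emph{uses} your hypothesis again: one shows inductively that $\Omega^{-i}R \in \mathcal{C}_{(i)}$. Recall $\mathcal{C}_{(i)} = \{M : \ext_R^j(M,C)=0 \text{ for } j>i\}$. Given $\Omega^{-i}R \in \mathcal{C}_{(i)}$, closure under injective envelopes gives $E(\Omega^{-i}R) \in \mathcal{C}_{(i)}$, and then the long exact sequence obtained from $0 \to \Omega^{-i}R \to E(\Omega^{-i}R) \to \Omega^{-(i+1)}R \to 0$ by applying $\Hom_R(-,C)$ yields $\ext_R^j(\Omega^{-(i+1)}R,C)=0$ for all $j>i+1$, i.e.\ $\Omega^{-(i+1)}R \in \mathcal{C}_{(i+1)}$. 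The base case $R \in \mathcal{C}_{(0)}$ is clear. With this correction, your argument goes through and matches the paper's intended proof.
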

\begin{proof}
		If $\mathcal{C}$ is of cofinite type, then $\mathcal{C}_{(i)}$ is easily seen to be of cofinite type too for any $i=0,1,\ldots,n-1$. Then $\mathcal{C}_{(i)}$ is closed under injective envelopes by Proposition~\ref{P01}.

		The other implication follows from Lemma~\ref{L01}, the proof of Lemma~\ref{L02}, and Lemma~\ref{L28}.
\end{proof}

In the rest of the section, we exhibit for each $n \ge 2$ a concrete example of an $n$-cotilting class which is \emph{not} of cofinite type, but for which $\mathcal{C}_{(i)}$ is closed under injective envelopes for all $i=0,1,\ldots,n-2$. To this end, we first recall a characterization of cotilting classes which is valid for any (even non-commutative) ring:

\begin{prop}
		\label{P02}
		\emph{(\cite[Proposition 3.14]{APST})}
			Let $R$ be a ring, $n \geq 0$, and $\mathcal{C}$ a class of left $R$-modules. Then $\mathcal{C}$ is $n$-cotilting if and only if the following conditions hold:
			\begin{enumerate}
				\item[(i)] $\mathcal{C}$ is definable,
				\item[(ii)] $R \in \mathcal{C}$ and $\mathcal{C}$ is closed under extensions and syzygies,
			\item[(iii)] each $n$-th syzygy module is in $\mathcal{C}$,
			\end{enumerate}
			In particular, a class of left $R$-modules is $1$-cotilting precisely when it is a definable torsion-free class containing $R$.
\end{prop}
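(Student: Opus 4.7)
The forward direction would be verified directly from the definition of an $n$-cotilting module. Definability of any cotilting class is well-known and is in fact already used in Lemma~\ref{L78}. For condition (ii), the module $R$ is projective so $\ext_R^i(R,C)=0$ for all $i\geq 1$, hence $R \in \mathcal{C}$; closure under extensions is the standard Ext long exact sequence; and for closure under syzygies, the sequence $0 \to \Omega M \to P \to M \to 0$ with $P$ projective gives $\ext_R^i(\Omega M, C) \simeq \ext_R^{i+1}(M,C) = 0$ whenever $M \in \mathcal{C}$. For (iii), the bound $\id C \leq n$ from (C1) together with dimension shifting yields $\ext_R^i(\Omega^n M, C) \simeq \ext_R^{i+n}(M,C) = 0$ for each $i \geq 1$ and each $M \in \ModR$, so every $n$-th syzygy lies in $\mathcal{C}$.

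The converse is the substantial part. Given (i)--(iii), the plan is to produce an $n$-cotilting module $C$ with $\mathcal{C}={}^\perp C$. First I would show that $(\mathcal{C}, \mathcal{C}^{\perp_1})$ is a complete hereditary cotorsion pair: heredity follows from the closure of $\mathcal{C}$ under syzygies, forcing $\mathcal{C}^{\perp} = \mathcal{C}^{\perp_1}$; the equality $\mathcal{C} = {}^{\perp_1}(\mathcal{C}^{\perp_1})$ and completeness would be extracted from definability (which supplies a small pure-injective cogenerator of $\mathcal{C}$ via the standard theory of definable subcategories), combined with Theorem~\ref{cotorsionpairs}. Starting from an injective cogenerator $W$ of $\ModR$ (which lies in $\mathcal{C}^{\perp_1}$), I would take successive special $\mathcal{C}^{\perp_1}$-preenvelopes to obtain an exact sequence
\[
0 \to W \to B_0 \to B_1 \to \cdots \to B_{n-1} \to B_n \to 0,
\]
with each $B_i \in \mathcal{C}^{\perp_1}$; condition (iii) is the key ingredient guaranteeing termination, since it forces the $n$-th cosyzygy of $W$ to annihilate $\ext_R^1(-,-)$ against every $X \in \mathcal{C}$ automatically. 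Setting $C := \bigoplus_{i=0}^n B_i$, the axioms (C1)--(C3) are then routine to check: (C1) follows from the length of the constructed coresolution, (C2) from each $B_i$ lying in $\mathcal{C}^{\perp_1}$ combined with $C \in \mathcal{C}$, and (C3) is built into the coresolution; the identity $\mathcal{C} = {}^\perp C$ follows from completeness of the cotorsion pair.

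For the special case $n=1$, closure of $\mathcal{C}$ under submodules is automatic once $\id C \leq 1$: if $N \subseteq M$ with $M \in \mathcal{C}$, then the Ext sequence associated to $0 \to N \to M \to M/N \to 0$ gives the segment $\ext_R^1(M,C) \to \ext_R^1(N,C) \to \ext_R^2(M/N,C) = 0$, forcing $N \in \mathcal{C}$. Together with closure under products, extensions and direct limits from definability, $\mathcal{C}$ is therefore a definable torsion-free class containing $R$. Conversely, a definable torsion-free class containing $R$ is automatically closed under syzygies (submodules of projectives) and contains every first syzygy, so (i)--(iii) are satisfied for $n=1$. The main obstacle I anticipate is the construction and completeness of the cotorsion pair $(\mathcal{C}, \mathcal{C}^{\perp_1})$ from the abstract hypotheses alone; this is where the structural theory of definable subcategories (the existence of a small pure-injective cogenerator) is indispensable and cannot be avoided by purely formal manipulations.
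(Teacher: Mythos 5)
First, a remark on the comparison itself: the paper offers no proof of this proposition---it is quoted verbatim from \cite[Proposition 3.14]{APST}---so your argument can only be measured against the standard proof there. Your ``only if'' direction and the reduction of the $n=1$ statement to the general one are correct. The serious problem lies in your construction of the cotilting module in the ``if'' direction: the approximation sequence runs in the wrong direction. Axiom (C3) asks for a \emph{left} resolution $0 \to C_n \to \cdots \to C_0 \to W \to 0$ of the injective cogenerator $W$ by modules in $\Prod(C)$, so the correct procedure is to take iterated special $\mathcal{C}$-\emph{precovers} of $W$ (exactly the direction the paper takes with its $\mathcal{C}_{(k)}$-covers in Construction~\ref{C99}), producing kernels $K_i \in \mathcal{C}^{\perp_1}$; condition (iii) then enters to show that the process terminates, via $\ext_R^1(K_{n-1},Y) \cong \ext_R^{n+1}(W,Y) \cong \ext_R^1(\Omega^n W,Y) = 0$ for all $Y \in \mathcal{C}^{\perp_1}$, which places $K_{n-1}$ back in ${}^{\perp_1}(\mathcal{C}^{\perp_1}) = \mathcal{C}$. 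Your coresolution $0 \to W \to B_0 \to \cdots \to B_n \to 0$ by special $\mathcal{C}^{\perp_1}$-preenvelopes collapses immediately: as you yourself note, $W$ already lies in $\mathcal{C}^{\perp_1}$, so one may take $B_0 = W$ and $B_i = 0$ for $i \ge 1$, and the resulting module $C = W$ satisfies ${}^\perp C = \ModR$, not $\mathcal{C}$. Relatedly, the claim that (iii) ``forces the $n$-th cosyzygy of $W$ to annihilate $\ext^1$'' is vacuous, since cosyzygies of an injective module vanish; (iii) is genuinely used only in the precover resolution as above (and to see that every module in $\mathcal{C}^{\perp_1}$ has injective dimension at most $n$, which gives (C1)).

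Second, the step you flag as the main obstacle---that $(\mathcal{C},\mathcal{C}^{\perp_1})$ is a complete cotorsion pair with $\mathcal{C} = {}^{\perp_1}(\mathcal{C}^{\perp_1})$---cannot be obtained from Theorem~\ref{cotorsionpairs}, which concerns cotorsion pairs \emph{generated by a set} placed on the left; here $\mathcal{C}$ is the left-hand class and is a proper class. What is needed instead is the theorem that a cotorsion pair cogenerated by a class of \emph{pure-injective} modules is perfect, together with an argument (using definability, closure under extensions and syzygies, and (iii)) identifying $\mathcal{C}$ with ${}^{\perp_1}$ of a suitable class of pure-injectives; this is precisely the non-formal content of the proof in \cite{APST}. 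So your overall architecture---reduce to a complete hereditary cotorsion pair and then resolve $W$---is the right one, but as written the construction produces the wrong module and the completeness step remains unestablished.
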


In order to construct the aforementioned examples, we need a suitable family of examples of non-cofinite type 1-cotilting classes to start with. The following is a generalization of \cite[Proposition 4.5]{B2}:

\begin{ex}
		\label{ex:example-non-cofinite}
		Let $R$ be a local commutative ring admitting a non-trivial idempotent ideal $J$. Let $\mathcal{G}$ be a Gabriel topology of finite type such that $J \in \mathcal{G}$, and such that $\mathcal{G}$ is \emph{faithful}, i.e. $\Hom_R(R/I,R)=0$ for all $I \in \mathcal{G}$. Let $\mathcal{F}=\bigcap_{I \in \mathcal{G}} \Ker \Hom_R(R/I,-)$ be the torsion-free class of the associated hereditary torsion pair of finite type. Given a module $M$, let $\Soc_J(M)=\{m \in M \mid Jm=0\}$. We define a class $\mathcal{C}$ as follows:
$$\mathcal{C}=\{M \in \ModR \mid M/\Soc{}_J(M) \in \mathcal{F}\}.$$
Alternatively,
$$ \mathcal{C}=\{M \in \ModR \mid Jt_\mathcal{G}(M)=0\} = \{M \in \ModR \mid t_\mathcal{G}(M) \in \operatorname{Mod-}R/J\}, $$
where $t_\mathcal{G}$ is the torsion radical associated to the torsion pair $(\mathcal{T},\mathcal{F})$. 

We will show that $\mathcal{C}$ is a 1-cotilting class, but not of cofinite type. First, we claim that $\mathcal{C}=\bigcap_{I \in \mathcal{G}} \Ker \Hom_R((J+I)/I,-)$. Let $M$ be a module such that there is a non-zero map $f\colon (J+I)/I \rightarrow M$ for some $I \in \mathcal{G}$. Since $(J+I)/I$ is $J$-divisible (i.e.\ $((J+I)/I) \cdot J = (J+I)/I$), we have $\operatorname{Im}f \cap \Soc_J(M)=0$. Thus, if we compose $f$ with the surjection $M \to M/t_\mathcal{G}(M)$, we obtain a non-zero map $(J+I)/I \rightarrow M/t_\mathcal{G}(M)$. Since $(J+I)/I \in \mathcal{T}$, it follows that $M/t_\mathcal{G}(M) \not\in \mathcal{F}$ and $M \not\in \mathcal{C}$. This establishes one inclusion of the claim. 
		
		Let now $M$ be such that $\Hom_R((J+I)/I,M)=0$ for all $I \in \mathcal{G}$, and let us show that $M \in \mathcal{C}$. Towards contradiction, suppose that there is non-zero map $f\colon R/I \rightarrow M/\Soc_J(M)$ for some $I \in \mathcal{G}$. Because $\Soc_J(M/\Soc_J(M))=0$, applying $\Hom_R(-,M/\Soc_J(M))$ to $0 \to (J+I)/I \to R/I \to R/(J+I) \to 0$ yields an exact sequence 
$$ 0 \to \Hom_R(R/I,M/{\Soc}_J(M)) \to \Hom_R((J+I)/I,M/{\Soc}_J(M)).$$
Hence $f$ restricts to a non-zero map $\tilde{f}\colon (J+I)/I \rightarrow M/\Soc_J(M)$. Let us denote the image of $\tilde{f}$ by $\tilde X$ and by $X$ the full preimage of $\tilde X$ in $M$ with respect to the projection $M \rightarrow M/\Soc_J(M)$. We have $IX \subseteq \Soc_J(M)$, and thus $IJX=0$. Observe that if $JX \ne 0$, we would have a non-zero morphism $R/I \to JX$ which would similarly as above restrict to a non-zero morphism $(J+I)/I \to JX$. This is, however, not possible since we assume that $\Hom_R((J+I)/I,M)=0$. Thus $JX = 0$ and $X \subseteq \Soc_J(M)$. But then $\tilde X=0$, contradicting that $\tilde{f}$ is a non-zero map. This establishes the other inclusion of the claim. In particular, we have proved that $\mathcal{C}$ is a torsion-free class.

As a torsion-free class, $\mathcal{C}$ is closed under products and (pure) submodules. To prove that $\mathcal{C}$ is definable, it remains to treat direct limits. Since $(\mathcal{T},\mathcal{F})$ is of finite type, the torsion radical $t_\mathcal{G}$ commutes with direct limits. Given a directed system $M_i, i \in I$ with $M_i \in \mathcal{C}$, we compute: 
		$$t_\mathcal{G}(\varinjlim_i M_i) \simeq \varinjlim_i t_\mathcal{G}M_i.$$
The latter is a direct limit of $R/J$-modules, proving that $Jt_\mathcal{G}(\varinjlim_i M_i)=0$ as desired. 
Finally, since $\mathcal{G}$ is faithful, we have that $R \in \mathcal{F} \subseteq \mathcal{C}$. Using Proposition~\ref{P02}, we infer that $\mathcal{C}$ is a 1-cotilting class.

Finally, we prove that $\mathcal{C}$ is not of cofinite type. Indeed, if it was of cofinite type, Theorem~\ref{T:mainthm2} would provide us with a Gabriel topology $\mathcal{H}$ of finite type such that $\mathcal{C}=\bigcap_{K \in \mathcal{H}} \Ker \Hom_R(R/K,-)$. Since $\operatorname{Mod-}R/J \subseteq \mathcal{C}$, this implies that $J+K=R$ for all $K \in \mathcal{H}$. Since $R$ is local, the only possibility is $\mathcal{H}=\{R\}$, which forces $\mathcal{C}=\ModR$. Recall that we assumed $J \in \mathcal{G}$, and thus there is a finitely generated ideal $I \in \mathcal{G}$ with $I \subseteq J$. If $R/I \in \mathcal{C}$, then necessarily $R/I=\Soc_J(R/I)$, which implies $I=J$. But $J$ is a non-trivial idempotent ideal in a local commutative ring, so it cannot be finitely generated, a contradiction.
\end{ex}

As a next step, we would like to extend the example to $n$-cotilting classes for $n \ge 1$. The strategy is to reverse (under suitable assumptions) the process of Lemma~\ref{L01}. That is, we would like to combine a hereditary faithful torsion-free class $\mathcal{F}$ and an $n$-cotilting class, which is \emph{not} necessarily of cofinite type, to an $(n+1)$-cotilting class. Note that in Lemma~\ref{L01} we have $\operatorname{Inj-}R \cap \mathcal{F} \subseteq \mathcal{C}$, where $\operatorname{Inj-}R \subseteq \ModR$ is the class of injective modules. We will adopt this assumption for the next auxiliary result which can be viewed as an analog of~\cite[Lemma 3.10]{APST}.

\begin{lem} \label{L:naive-combination}
Let $\mathcal{F}$ be a hereditary torsion-free class and let $C \in \ModR$ be a module such that the class $\mathcal{C}={}^\perp C$ contains $\operatorname{Inj-}R \cap \mathcal{F}$. Then the class
$$\mathcal{D}=\{M \in \RMod \mid M \in \mathcal{F} \text{ and $\Omega^{-1} M \in \mathcal{C}$}\}$$
satisfies the following property: If $0 \to L \to M \to N \to 0$ is a short exact sequence with $M \in \mathcal{D}$, then $L \in \mathcal{D}$ if and only if $N \in \mathcal{C}$.
\end{lem}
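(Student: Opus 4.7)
The plan is to reduce the statement to a single short exact sequence that directly links $\Omega^{-1}L$ and $N$, and then read off the equivalence from the long exact Ext-sequence against $C$.

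First, I would observe that $L \in \mathcal{F}$ automatically: since $\mathcal{F}$ is a hereditary torsion-free class (closed under submodules) and $L \hookrightarrow M \in \mathcal{F}$, the membership $L \in \mathcal{D}$ is equivalent to $\Omega^{-1}L \in \mathcal{C}$. Thus the task reduces to proving $\Omega^{-1}L \in \mathcal{C} \iff N \in \mathcal{C}$.

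Next, I would manufacture the crucial sequence. Extend $L \subseteq M$ to the injective envelope $M \subseteq E(M)$, giving a short exact sequence $0 \to L \to E(M) \to E(M)/L \to 0$. Taking the quotient of $E(M)/L$ by $M/L = N$ leaves the cokernel $E(M)/M = \Omega^{-1}M$, so we obtain
\[
0 \to N \to E(M)/L \to \Omega^{-1}M \to 0.
\]
To recognise $E(M)/L$ up to an injective summand as $\Omega^{-1}L$, pick any embedding $E(L) \hookrightarrow E(M)$ and a complement $E(M) \cong E(L) \oplus I$ with $I$ injective; then $E(M)/L \cong \Omega^{-1}L \oplus I$. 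Because $\mathcal{F}$ is hereditary torsion-free, it is closed under injective envelopes, so $E(M) \in \mathcal{F}$, and hence $I \in \mathcal{F}$ as a submodule. Since $I$ is injective, $I \in \operatorname{Inj-}R \cap \mathcal{F} \subseteq \mathcal{C}$ by hypothesis.

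Finally, applying $\Hom_R(-,C)$ to the short exact sequence $0 \to N \to \Omega^{-1}L \oplus I \to \Omega^{-1}M \to 0$ gives, for each $i \ge 1$, the exact strip
\[
\ext_R^i(\Omega^{-1}M,C) \to \ext_R^i(\Omega^{-1}L \oplus I,C) \to \ext_R^i(N,C) \to \ext_R^{i+1}(\Omega^{-1}M,C).
\]
The two outer terms vanish because $\Omega^{-1}M \in \mathcal{C}$, and the middle term equals $\ext_R^i(\Omega^{-1}L,C)$ because $I \in \mathcal{C}$ and $\mathcal{C}$ is closed under direct summands. Hence $\ext_R^i(\Omega^{-1}L,C) \cong \ext_R^i(N,C)$ for all $i \ge 1$, and the stated equivalence $L \in \mathcal{D} \iff N \in \mathcal{C}$ follows. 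There is no real obstacle: the only subtle point is the decomposition $E(M)/L \cong \Omega^{-1}L \oplus I$, and once this is extracted the argument is a routine long exact sequence chase.
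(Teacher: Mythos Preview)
Your proof is correct and takes a genuinely different route from the paper. The paper first reformulates membership in $\mathcal{D}$ as ``$M \in \mathcal{F} \cap \mathcal{C}$ and every map $M \to C$ extends over $E(M)$'', then treats the two implications separately by explicitly chasing morphisms through diagrams involving $E(L)$ and $E(M)$; in particular, the implication $N \in \mathcal{C} \Rightarrow L \in \mathcal{D}$ is done by showing $\Hom_R(v,C)$ is surjective via a commutative square of injective envelopes. You instead produce a single short exact sequence $0 \to N \to \Omega^{-1}L \oplus I \to \Omega^{-1}M \to 0$ with $I \in \operatorname{Inj-}R \cap \mathcal{F} \subseteq \mathcal{C}$, and read off $\ext_R^i(\Omega^{-1}L,C) \cong \ext_R^i(N,C)$ for all $i \ge 1$ in one stroke. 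Your argument is more streamlined and symmetric: both implications fall out at once, and the role of the hypothesis $\operatorname{Inj-}R \cap \mathcal{F} \subseteq \mathcal{C}$ is made transparent (it kills the extraneous summand $I$). The paper's argument, while longer, has the minor advantage of not needing to split $E(M)$ as $E(L) \oplus I$, though this splitting is of course unproblematic.
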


\begin{proof}
First observe that $M \in \mathcal{D}$ if and only if $M\in \mathcal{F} \cap \mathcal{C}$ and each morphism $f\colon M \to C$ extends to $\tilde{f}\colon E(M) \to C$. Indeed, this follows at once from the long exact sequence obtained by applying $\Hom_R(-,C)$ to $0 \to M \to E(M) \to \Omega^{-1}(M) \to 0$.

Now let $\varepsilon\colon 0 \to L \overset{u}\to M \to N \to 0$ be exact with $M \in \mathcal{D}$. In particular $L \in \mathcal{F}$ and, if we apply $\Hom_R(-,C)$ to $\varepsilon$, we obtain isomorphisms $\ext^i_R(L,C) \cong \ext^{i+1}_R(N,C)$ for all $i \ge 1$.

Suppose next that $L \in \mathcal{D}$. In particular $L \in {}^\perp C$, so $\ext^i_R(N,C) = 0$ for all $i\ge2$. It remains to show that $\ext^1_R(N,C) = 0$. To this end, let $v\colon L \to E(L)$ be an injective envelope and let $w\colon M \to E(L)$ be a morphism such that $wu = v$, obtained by the injectivity of $E(L)$. If $f\colon L \to C$ is any homomorphism, it extends to $\tilde{f}\colon E(L) \to C$ since $L\in\mathcal{D}$. In particular $f = \tilde{f} v = \tilde{f}wu$, showing that the leftmost morphism in the following exact sequence is surjective:
$$ \Hom_R(M,C) \to \Hom_R(L,C) \to \ext_R^1(N,C) \to \ext_R^1(M,C) = 0 $$
Thus $\ext_R^1(N,C) = 0$ and $N \in \mathcal{C}$.

Conversely suppose that $N \in \mathcal{C}$. Then $L \in \mathcal{F} \cap \mathcal{C}$ by the above observations. To see that $L \in \mathcal{D}$, it remains to prove that each $f\colon L \to C$ extends to $\tilde{f}\colon E(L) \to C$. Consider the following commutative square with injective envelopes in rows, where the right vertical map is completed using the injectivity of E(M):
$$
\begin{CD}
L @>{v}>> E(L)  \\
@V{u}VV @VVV \\
M @>{z}>> E(M)
\end{CD}
$$ 
Then $\Hom_R(u,C)$ is surjective since $\ext^1_R(N,C) = 0$ and $\Hom_R(z,C)$ is surjective since $M\in\mathcal{D}$. It follows that $\Hom_R(v,C)$ is surjective, as required.
\end{proof}

Now we prove a result which, under more restrictive assumptions, allows us to combine a hereditary torsion-free class with an $n$-cotilting class.

\begin{prop} \label{P:combine}
Let $R$ be a ring and $n \ge 1$. Suppose that all of the following conditions are satisfied:
\begin{enumerate}
\item $\mathcal{C}$ is an $n$-cotilting class in $\ModR$ and $\Omega^{-1}(R) \in \mathcal{C}$.
\item $(\mathcal{T},\mathcal{F})$ is a faithful hereditary torsion pair of finite type in $\ModR$.
\item $\bigcap_{i=0}^{n-1} \Ker \ext_R^{i}(\mathcal{T},-) \subseteq \mathcal{C}$.
\end{enumerate}
Then $\mathcal{D}=\{M \in \RMod \mid M \in \mathcal{F} \text{ and $\Omega^{-1} M \in \mathcal{C}$}\}$ is an $(n+1)$-cotilting class which is closed under injective envelopes and $\mathcal{D}_{(1)}=\mathcal{C}$.
\end{prop}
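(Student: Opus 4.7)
The plan is to verify the three axioms characterizing $(n+1)$-cotilting classes from Proposition~\ref{P02}---namely definability, containment of $R$ and closure under extensions and syzygies, and containment of every $(n+1)$-th syzygy---and then establish the two further assertions separately. The central tool is Lemma~\ref{L:naive-combination}, which is unlocked by the preliminary observation that any injective $E$ belonging to $\mathcal{F}$ satisfies $\ext_R^i(\mathcal{T},E)=0$ for $i\ge 1$ (trivially) and $\Hom_R(\mathcal{T},E)=0$ (by $E\in\mathcal{F}$), hence lies in $\bigcap_{i\ge 0}\Ker\ext_R^i(\mathcal{T},-)\subseteq\mathcal{C}$ by hypothesis~(3); thus $\operatorname{Inj-}R\cap\mathcal{F}\subseteq\mathcal{C}$. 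A useful consequence is $\mathcal{D}\subseteq\mathcal{C}$: for $M\in\mathcal{D}$, the sequence $0\to M\to E(M)\to\Omega^{-1}M\to 0$ has outer terms in $\mathcal{C}$, so the long exact sequence in $\ext_R^*(-,C)$ yields $\ext_R^i(M,C)=0$ for all $i\ge 1$.

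To verify the axioms of Proposition~\ref{P02}: $R\in\mathcal{D}$ follows from hypotheses~(1) and~(2); closure under extensions follows from a horseshoe-style argument on injective envelopes, where the non-minimal cosyzygy of the middle term fits in a short exact sequence with the minimal cosyzygies at the ends, and the difference with the minimal cosyzygy consists of a summand of an injective in $\mathcal{F}\subseteq\mathcal{C}$; closure under syzygies is then Lemma~\ref{L:naive-combination} applied to $0\to\Omega M\to P\to M\to 0$ with $M\in\mathcal{D}\subseteq\mathcal{C}$, which requires the key subclaim that $R^{(\kappa)}\in\mathcal{D}$ for every cardinal $\kappa$. Condition~(iii) is analogously Lemma~\ref{L:naive-combination} applied to $0\to N\to R^{(\kappa)}\to\Omega^n X\to 0$ for $N=\Omega^{n+1}X$, using $\Omega^n X\in\mathcal{C}$ from Proposition~\ref{P02}(iii) applied to the $n$-cotilting class $\mathcal{C}$. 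The subclaim $R^{(\kappa)}\in\mathcal{D}$ is obtained by first showing that $\mathcal{D}$ is closed under arbitrary direct products---exploiting the splitting $\prod E(M_i)\simeq E(\prod M_i)\oplus E'$, where $E'\in\operatorname{Inj-}R\cap\mathcal{F}\subseteq\mathcal{C}$, to realize $\Omega^{-1}\prod M_i$ as a direct summand of $\prod\Omega^{-1}M_i\in\mathcal{C}$---and then passing from $R^\kappa$ to $R^{(\kappa)}$ via a pure-submodule argument. The definability condition~(i) is established in the same sweep.

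Closure of $\mathcal{D}$ under injective envelopes is immediate: for $M\in\mathcal{D}$, $E(M)\in\mathcal{F}$ by heredity and $\Omega^{-1}E(M)=0\in\mathcal{C}$. The identification $\mathcal{D}_{(1)}=\mathcal{C}$ follows by unfolding $\mathcal{D}_{(1)}=\{N:\Omega N\in\mathcal{D}\}$ and applying Lemma~\ref{L:naive-combination} in both directions to a projective presentation $0\to\Omega N\to R^{(\kappa)}\to N\to 0$: with middle term $R^{(\kappa)}\in\mathcal{D}$, the lemma states exactly that $\Omega N\in\mathcal{D}$ if and only if $N\in\mathcal{C}$.

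The main obstacle is the subclaim that each free module $R^{(\kappa)}$ lies in $\mathcal{D}$: in the non-Noetherian setting $E(R)^{(\kappa)}$ need not be injective, so $\Omega^{-1}R^{(\kappa)}\neq(\Omega^{-1}R)^{(\kappa)}$ in general. The product-closure trick coupled with hypothesis~(3)---which supplies enough injective acyclics in $\mathcal{F}\cap\mathcal{C}$ to absorb the residual discrepancy---is the technical heart of the argument.
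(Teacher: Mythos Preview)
Your plan is sound for products, pure submodules, extensions, syzygies, the containment of $(n{+}1)$-th syzygies, and the identification $\mathcal{D}_{(1)}=\mathcal{C}$; these pieces match the paper's route and Lemma~\ref{L:naive-combination} is indeed the main engine. Your product argument via the splitting $\prod E(M_i)\cong E\big(\prod M_i\big)\oplus E'$ with $E'\in\operatorname{Inj-}R\cap\mathcal{F}\subseteq\mathcal{C}$ is a nice variant that the paper does not spell out.

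There is, however, a genuine gap: closure of $\mathcal{D}$ under \emph{direct limits} is not established, and the phrase ``in the same sweep'' does not address it. Your pure-submodule argument only gives $R^{(\kappa)}\in\mathcal{D}$ once you already know closure under pure submodules, which in turn uses that pure quotients of $\mathcal{D}$ land in $\mathcal{C}$; but neither products nor pure submodules imply closure under direct limits. The attempt via pure quotients also stalls: from $0\to K\to M\to N\to 0$ pure exact with $K,M\in\mathcal{D}$, Lemma~\ref{L:naive-combination} yields only $N\in\mathcal{C}$, and together with $N\in\mathcal{F}$ this does \emph{not} give $\Omega^{-1}N\in\mathcal{C}$. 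A direct-limit analogue of your product trick fails too, since injective envelopes do not commute with $\varinjlim$.

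The paper closes this gap by introducing the auxiliary class
\[
\mathcal{I}=\bigcap_{i=0}^{n-1}\Ker\ext_R^i(\mathcal{T},-)=\bigcap_{i=0}^{n-1}\bigcap_{I\in\mathcal{G}^f}\Ker H^i(I;-),
\]
which is definable because Koszul cohomology commutes with products and direct limits, and proving $\operatorname{Inj-}R\cap\mathcal{F}\subseteq\mathcal{I}\subseteq\mathcal{D}$. Then, for a directed system $(M_i)$ in $\mathcal{D}$, one uses the functorial embeddings $M_i\hookrightarrow F^{\Hom_R(M_i,F)}$ (with $F$ an injective cogenerator of $\mathcal{F}$) from Lemma~\ref{functorialmap}: the middle terms lie in $\mathcal{I}$, so their direct limit stays in $\mathcal{I}\subseteq\mathcal{D}$, the cokernels lie in $\mathcal{C}$ by Lemma~\ref{L:naive-combination}, hence so does their direct limit, and a final application of the lemma puts $\varinjlim M_i$ into $\mathcal{D}$. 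This use of $\mathcal{I}$ (and its definability) is the missing idea in your proposal.
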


\begin{proof}
Let $C \in \ModR$ be a cotilting module such that $\mathcal{C} = {}^\perp C$ and let us denote the class from condition (4) by $\mathcal{I}$. If $\mathcal{G}$ is the Gabriel topology corresponding to $(\mathcal{T},\mathcal{F})$, we have
$$ \bigcap_{i=0}^n \Ker \ext_R^{i}(\mathcal{T},-) = \bigcap_{i=0}^n \bigcap_{I \in \mathcal{G}^f} \Ker \ext_R^{i}(R/I,-) = \bigcap_{i=0}^n \bigcap_{I \in \mathcal{G}^f} \Ker H^{i}(I;-). $$
Indeed, the first equality follows by an argument analogous to the one for Lemma~\ref{L:vanish} while the second equality follows from Corollary~\ref{C:Koszul-vanish}. Since all the $H^{i}(I;-)$ commute with direct products and direct limits, the class $\mathcal{I}$ is definable by~\cite[\S\S2.1--2.3]{CB}.

Next we claim that $\operatorname{Inj-}R \cap \mathcal{F} \subseteq \mathcal{I} \subseteq \mathcal{D}$. The first inclusion is trivial and to see the second one, let $M \in \mathcal{I}$ and consider the short exact sequence $0 \to M \to E(M) \to \Omega^{-1}(M) \to 0$. We must show that $M \in \mathcal{D}$. Clearly $M \in \mathcal{F}$ by assumption, and we also have $E(M) \in \mathcal{F}$. If $T \in \mathcal{T}$ is torsion, it follows that $\ext^i_R(T,\Omega^{-1}(M)) \cong \ext^{i+1}_R(T,M)$ for all $i\ge 0$. Thus,
$$ \Omega^{-1}(M) \in \bigcap_{i=0}^{n-1} \Ker \ext_R^{i}(\mathcal{T},-) \subseteq \mathcal{C} $$
by condition (3), and the claim is proved.

Now we prove that $\mathcal{D}$ is $(n+1)$-cotilting by checking the assumptions of Proposition~\ref{P02}. If $M \in \mathcal{D}$ and $N \subseteq M$ is a pure submodule, then $M/N \in \mathcal{C}$ since $M \in \mathcal{C}$ and definable classes are closed under pure quotients; \cite[Theorem 3.4.8]{MP}. Hence $N \in \mathcal{D}$ by Lemma~\ref{L:naive-combination}. If $(M_i)_{i \in I}$ is a directed system in $\mathcal{D}$, we fix an injective module $F \in \mathcal{F}$ which cogenerates $\mathcal{F}$ (see \cite[Proposition VI.3.7]{St}) and consider the directed system of maps $(M_i \rightarrow F_{M_i} = F^{\Hom_R(M_i,F)})_{i \in I}$ given by Lemma~\ref{functorialmap}. All the maps in the system are monomorphisms by the choice of $F$, and there is an exact sequence
$$ 0 \to \varinjlim_{i \in I} M_i \to \varinjlim_{i \in I} F_{M_i} \to \varinjlim_{i \in I} F_{M_i}/M_i \to 0. $$ 
As $F_{M_i} \in \mathcal{I}$ for each $i \in I$, we have $\varinjlim_{i \in I} F_{M_i} \in \mathcal{I} \subseteq \mathcal{D}$. Since $F_{M_i} \in \mathcal{D}$, we have $F_{M_i}/M_i \in \mathcal{C}$ for each $i \in I$, and so $\varinjlim_{i \in I} F_{M_i}/M_i \in \mathcal{C}$. It follows from Lemma~\ref{L:naive-combination} that $\varinjlim_{i \in I} M_i \in \mathcal{D}$. Finally, since $\mathcal{D}$ is closed under products by its very definition, we have verified Proposition~\ref{P02}(i).

Conditions (1) and (2) imply that $R \in \mathcal{D}$. Since $\mathcal{D}$ is definable, it must contain all projective modules as well. Hence given any $M \in \ModR$, we have $\Omega(M) \in \mathcal{D}$ if and only if $M \in \mathcal{C}$ by Lemma~\ref{L:naive-combination}. In particular $\mathcal{D}$ is closed under taking syzygies, and that $\mathcal{D}$ is closed under extensions follows from the Horseshoe Lemma. Thus Proposition~\ref{P02}(ii) holds for $\mathcal{D}$. Finally, if $M \in \ModR$, then $\Omega^n(M) \in \mathcal{C}$ since $\mathcal{C}$ is assumed to be $n$-cotilting, so $\Omega^{n+1}(M) \in \mathcal{D}$ by what we have just shown. Hence $\mathcal{D}$ is an $(n+1)$-cotilting class by Proposition~\ref{P02} and we have also proved that
$$ \mathcal{D}_{(1)} = \{ M \in \ModR \mid \Omega(M) \in \mathcal{D} \} = \mathcal{C}. $$
The closure of $\mathcal{D}$ under injective envelopes is obvious from the definition.
\end{proof}

Now we formulate an easier way to apply this result for constructing $n$-cotilting classes not of cofinite type. The constructed classes are almost identical to what we obtained for classes of cofinite type in Lemma~\ref{L02}. The only difference is that the last torsion-free class in the sequence \emph{need not} be of cofinite type (viewed as a $1$-cotilting class). 

\begin{cor} \label{cor:n-cotilt-non-cofinite}
Let $R$ be a commutative ring, let $n \ge 1$, and let $(\mathcal{F}_0,\mathcal{F}_1,\ldots,\mathcal{F}_{n-1})$ be a sequence of definable torsion-free classes such that
\begin{enumerate}
  \item[(i)] $\mathcal{F}_i$ is hereditary for each $i=0,1,\ldots,n-2$,
  \item[(ii)] $\mathcal{F}_0 \subseteq \mathcal{F}_1 \subseteq \cdots \subseteq \mathcal{F}_{n-1}$,
  \item[(iii)] $\Omega^{-i} R \in \mathcal{F}_i$ for each $i=0,1,\ldots,n-1$.
\end{enumerate}
Then $\mathcal{D}=\{M \in \ModR \mid \Omega^{-i}M \in \mathcal{F}_i \text{ for each $i=0,1,\ldots,n-1$}\}$ is an $n$-cotilting class such that $\mathcal{D}_{(i)}$ is closed under injective envelopes for each $i=0,1,\ldots,n-2$. In particular, $\mathcal{D}$ is of cofinite type if and only if the definable torsion-free class $\mathcal{F}_{n-1}$ is hereditary.
\end{cor}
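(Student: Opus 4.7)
The plan is to prove the cotilting statement by downward induction on $k$, running from $k=n-1$ down to $k=0$, that each class
\[
\mathcal{C}_k := \{M \in \ModR \mid \Omega^{-i} M \in \mathcal{F}_{k+i} \text{ for all } i = 0, 1, \ldots, n-1-k\}
\]
is an $(n-k)$-cotilting class which is additionally closed under injective envelopes whenever $k \leq n-2$. Applied at $k=0$ this gives $\mathcal{D}=\mathcal{C}_0$ as an $n$-cotilting class. The identification $\mathcal{D}_{(k)}=\mathcal{C}_k$ is then forced by iterating Lemma~\ref{L31}(i), using that $\Omega M \in \mathcal{F}_{k-1}$ holds automatically (torsion-free classes are closed under submodules and any free module lies in $\mathcal{F}_0 \subseteq \mathcal{F}_{k-1}$).

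The base case $k=n-1$ is immediate: $\mathcal{C}_{n-1}=\mathcal{F}_{n-1}$ is a definable torsion-free class containing $R$, hence $1$-cotilting by Proposition~\ref{P02}. For the inductive step with $0 \leq k \leq n-2$, assuming $\mathcal{C}_{k+1}$ is $(n-k-1)$-cotilting, I intend to apply Proposition~\ref{P:combine} to the hereditary torsion pair $(\mathcal{T}_k,\mathcal{F}_k)$ and the cotilting class $\mathcal{C}_{k+1}$. The pair is of finite type by Proposition~\ref{P:torsion} (since $\mathcal{F}_k$ is hereditary and definable, hence closed under direct limits) and is faithful because $R \in \mathcal{F}_0 \subseteq \mathcal{F}_k$ by (iii) and (ii). The factorization $\mathcal{C}_k = \{M \mid M \in \mathcal{F}_k \text{ and } \Omega^{-1}M \in \mathcal{C}_{k+1}\}$ is exactly the output of Proposition~\ref{P:combine}, and hypothesis (1) of that proposition reduces to $\Omega^{-1-j}R \in \mathcal{F}_{1+j} \subseteq \mathcal{F}_{k+1+j}$, which is (iii) combined with (ii).

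The main obstacle is verifying hypothesis (3) of Proposition~\ref{P:combine}, that is,
\[
\bigcap_{i=0}^{n-k-2} \Ker \ext_R^i(\mathcal{T}_k,-) \subseteq \mathcal{C}_{k+1}.
\]
For $M$ in the left-hand side, I will show by induction on $j$ that $\Omega^{-j} M \in \mathcal{F}_k$ for $j=0,1,\ldots,n-k-2$, applying $\Hom_R(T,-)$ (for $T \in \mathcal{T}_k$) to the exact sequence $0 \to \Omega^{-j} M \to E(\Omega^{-j} M) \to \Omega^{-j-1} M \to 0$ and using that $\mathcal{F}_k$, being hereditary torsion-free, is closed under injective envelopes; the dimension-shift $\Hom_R(T,\Omega^{-j-1}M) \cong \ext_R^{j+1}(T,M)$ kicks in because $\Hom_R(T,E(\Omega^{-j}M))=0$. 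Since $\mathcal{F}_k \subseteq \mathcal{F}_{k+1+j}$ by (ii), this yields $M \in \mathcal{C}_{k+1}$, completing the induction.

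For the cofinite-type dichotomy, I appeal to Theorem~\ref{thmcofinitetype}: $\mathcal{D}$ is of cofinite type iff every $\mathcal{D}_{(i)}$ with $i=0,\ldots,n-1$ is closed under injective envelopes. The first $n-1$ closures have already been established, so everything reduces to whether $\mathcal{D}_{(n-1)} = \mathcal{F}_{n-1}$ is closed under injective envelopes, and for a torsion-free class this is equivalent to being hereditary (Section~\ref{sec:torsion-comm}). Thus if $\mathcal{F}_{n-1}$ is hereditary the sequence becomes characteristic and $\mathcal{D}$ is of cofinite type by Theorem~\ref{T01}, while the converse is immediate from the same equivalence.
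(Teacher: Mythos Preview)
Your proof is correct and follows the same inductive application of Proposition~\ref{P:combine} as the paper; the only organizational difference is that the paper carries the auxiliary formula $\mathcal{C}_{k+1}=\bigcap_{i=0}^{n-k-2}\Ker\ext_R^i(\mathcal{T}_{k+1+i},-)$ through the induction (so hypothesis~(3) is immediate from $\mathcal{T}_k\supseteq\mathcal{T}_{k+1+i}$), whereas you verify~(3) by a direct dimension-shift argument that amounts to the same computation. One small point: your appeal to Lemma~\ref{L31}(i) for $\mathcal{D}_{(k)}=\mathcal{C}_k$ is technically out of scope (that lemma is stated for characteristic sequences) and unnecessary---the identification already follows by iterating the conclusion $\mathcal{D}_{(1)}=\mathcal{C}$ that Proposition~\ref{P:combine} hands you at each step.
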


\begin{proof}
We will inductively apply Proposition~\ref{P:combine} and prove the result along with the following equality:
\begin{equation} \label{E:strange-cotilt}
\mathcal{D} = \bigcap_{i=0}^{n-1} \Ker\ext^i_R(\mathcal{T}_i,-),
\end{equation}
where for each $i$, $\mathcal{T}_i$ is the torsion class corresponding to $\mathcal{F}_i$.

If $n=1$, there is nothing to prove, because a definable torsion-free class containing $R$ is 1-cotilting by Proposition~\ref{P02}. If $n>1$, we apply the inductive hypothesis to $(\mathcal{F}_1,\ldots,\mathcal{F}_{n-1})$ and obtain an $(n-1)$-cotilting class
\begin{equation} \label{E:induction}
\mathcal{C} = \bigcap_{i=0}^{n-2} \Ker\ext^{i}_R(\mathcal{T}_{i+1},-).
\end{equation}
We need to check conditions (1)--(3) of Proposition~\ref{P:combine} for $\mathcal{F}_0$ and $\mathcal{C}$ and formula~\eqref{E:strange-cotilt} for $\mathcal{D}$. However, (1) and (2) are straightforward and (3) follows immediately from~\eqref{E:induction} since $\mathcal{T}_0 \supseteq \mathcal{T}_i$ for each $i=1,\dots,n-2$. Finally, to prove~\eqref{E:strange-cotilt}, consider $M \in \mathcal{F}_0$ and a short exact sequence $0 \to M \to E(M) \to \Omega^{-1}(M) \to 0$. Then $E(M) \in \mathcal{F}_0$ and $\ext^{i}_R(T,\Omega^{-1}(M)) \cong \ext^{i+1}_R(T,M)$ for each $T \in \mathcal{T}_0$ and $i \ge 0$. It follows that in that case $M \in \mathcal{D}$ if and only if $\Omega^{-1}(M) \in \mathcal{C}$ if and only if $\ext^i_R(\mathcal{T}_i,M) = 0$ for each $i=1,\dots,n-1$. 
\end{proof}

We conclude by an explicit construction of an $n$-cotilting class not of cofinite type, by combining Corollary~\ref{cor:n-cotilt-non-cofinite} and Example~\ref{ex:example-non-cofinite}.

\begin{ex}
		\label{EX00}
		Let $n>0$ and $R$ be a local commutative ring satisfying:
		\begin{enumerate}
				\item[(i)] there is a non-trivial idempotent ideal $J$ in $R$, 
				\item[(ii)] there is a finitely generated ideal $I \subseteq J$ satisfying $\ext_R^i(R/I, R)=0$ for all $i=0,1,\ldots,n-1$.		
		\end{enumerate}
		
		First we provide a concrete example of such a ring $R$. Let $k$ be a field, and let $R$ be the ring of Puiseux series in $n$ variables. That is, 
$$R= \bigcup_{m \in \mathbb{N}} k[[x_1^{\frac{1}{m}}, x_2^{\frac{1}{m}},\ldots,x_n^{\frac{1}{m}}]]$$ 
		is the ring of formal power series in $n$ variables with exponents which are rational, but for each particular series the denominators are bounded. This ring has a unique maximal ideal $J$ consisting of all series with zero coefficient in degree 0, and $J$ is easily seen to be idempotent. Also, put $I=\Span(x_1,x_2,\ldots,x_n)$ and note that the elements $(x_1,x_2,\ldots,x_n)$ form a regular sequence. Then $K_\bullet(I)$ is a projective resolution of $R/I$, implying that $H^i(I;R)=0$ for all $i=0,1,\dots,n-1$. Therefore, we have $\ext_R^i(R/I,R)=0$ for all $i=0,1,\ldots,n-1$ by Corollary~\ref{C:Koszul-vanish}.
		
		Put $\mathcal{F}_{n-1}=\{M \in \ModR \mid M/\Soc_J(M) \in \Ker \Hom_R(R/I,-)\}$. Then $\mathcal{F}_{n-1}$ fits the construction of Example~\ref{ex:example-non-cofinite} (the Gabriel topology $\mathcal{G}$ is generated by the ideal $I$, and the choice of non-trivial idempotent ideal is $J$). This shows that $\mathcal{F}_{n-1}$ is a 1-cotilting class not of cofinite type, i.e. a definable torsion-free class containing $R$ which is not hereditary. It also follows from the assumption on $I$ that $\Ker \Hom_R(R/I,-) \subseteq \mathcal{F}_{n-1}$ contains $\Omega^{-i}R$ for all $i=0,1,\ldots,n-1$.

		We put $\mathcal{F}_{k}=\Ker \Hom_R(R/I,-)$ for all $k=0,1,\ldots,n-2$, and note that those are hereditary torsion-free classes of finite type. Then it is straightforward to check that the sequence $(\mathcal{F}_0,\mathcal{F}_1,\ldots,\mathcal{F}_{n-1})$ satisfies conditions of Corollary~\ref{cor:n-cotilt-non-cofinite}. We conclude that
		$$\mathcal{D}=\{M \in \ModR \mid \Omega^{-i}M \in \mathcal{F}_i \text{ for all $i=0,1,\ldots,n-1$}\}$$
		is an $n$-cotilting class such that $\mathcal{D}_{(i)}$ is closed under injective envelopes for all $i=0,1,\ldots,n-2$, but not for $i=n-1$. In particular, $\mathcal{D}$ is not of cofinite type.
	\end{ex}

\bibliographystyle{alpha}
\bibliography{tilt_n}

\end{document}